% !TEX TS-program = LuaLaTeX
% !TEX encoding = UTF-8 Unicode

\documentclass[a4paper]{article}
\usepackage[a4paper, margin=2cm]{geometry}

\usepackage[utf8]{inputenc}

\usepackage{mathtools}
\usepackage{amssymb}
\usepackage{amsthm}

\usepackage[inline]{enumitem}
\usepackage{tikz-cd}

\usepackage[hidelinks]{hyperref}
\usepackage{cleveref}

\usepackage{xcolor}
\hypersetup{
    colorlinks,
    linkcolor={red!50!black},
    citecolor={blue!50!black},
    urlcolor={blue!80!black}
}

\newcommand{\ovl}[1]{{\overline{#1}}}
\newcommand{\ovs}[2]{{\overset{#1}{#2}}}

\newcommand{\NN}{\mathbb{N}}

\newcommand{\ZZ}{\mathbb{Z}}

\newcommand{\RR}{\mathbb{R}}

\DeclareMathOperator{\spn}{span}

\DeclareMathOperator{\dom}{dom}

\DeclareMathOperator{\prj}{prj}

\DeclareMathOperator{\Id}{Id}

\DeclareMathOperator{\sgn}{sgn}
\newcommand{\facemorphism}[1]{s(#1)}
\newcommand{\morphismface}[1]{s^{-1}(#1)}
\newcommand{\st}[1]{\llcorner {#1}\urcorner }

\newcommand{\REZ}{\nabla ^{\ovl{N}}}
\DeclareMathOperator{\Mor}{Mor}
\newcommand{\Met}{\mathsf{Met}}
\newcommand{\Bet}{\mathsf{Bet}}
\newcommand{\ChZ}{\mathsf{Ch}_\ZZ }
\newcommand{\ptsSet}{\mathsf{sSet}_*}

\newtheorem{theorem}{Theorem}[section]
\newtheorem{corollary}[theorem]{Corollary}
\newtheorem{lemma}[theorem]{Lemma}
\newtheorem{proposition}[theorem]{Proposition}
\newtheorem{example}[theorem]{Example}
\newtheorem{remark}[theorem]{Remark}
\theoremstyle{definition}
\newtheorem{definition}[theorem]{Definition}

\newcommand{\MH}{\mathrm{MH}}

\newcommand{\MC}{\mathrm{MC}}
\newcommand{\MZ}{\mathrm{MZ}}
\newcommand{\MS}{\mathrm{MS}}
\newcommand{\MB}{\mathrm{MB}}
\newcommand{\NR}{\mathrm{\ovl{N}}}
\newcommand{\pp}{\mathbf{p}}
\newcommand{\xx}{\mathbf{x}}
\newcommand{\yy}{\mathbf{y}}
\newcommand{\zz}{\mathbf{z}}
\newcommand{\ppath}{path}
\newcommand{\Paths}{P}
\newcommand{\pt}{\mathrm{pt}}
\newcommand{\ddd}{\mathsf{d}}
\newcommand{\sss}{\mathsf{s}}
\newcommand{\hG}{\hat{G}}

\begin{document}

    \title{
        Magnitude Homology, Diagonality, Medianness, Künneth and Mayer-Vietoris\\
    }
    \author{Rémi Bottinelli \& Tom Kaiser \footnote{\texttt{first.last@unine.ch}}}
    \maketitle

    \paragraph{Abstract.}
    Magnitude homology of graphs was introduced by Hepworth and Willerton in~\cite{HW}.
    Magnitude homology of arbitrary metric spaces by Leinster and Shulman in~\cite{LS}.
    We verify that the Künneth and Mayer-Vietoris formulas proved in~\cite{HW} for graphs adapt naturally to the metric setting.
    The same is done for the notion of diagonality, also originating from~\cite{HW}.
    Stability of this notion under products, retracts, filtrations is verified, and, as an application, it is shown that median spaces are diagonal; in particular any Menger convex median space has vanishing magnitude homology.
    Finally, we argue for a definition of magnitude homology in the context of “betweenness spaces” and develop some of its properties.

    \paragraph{Acknowledgements.} Victor Chepoi for making the content of \Cref{proposition:finite_median_spaces_are_almost_graphs} known to us.

    %\tableofcontents

    \section{Introduction}

    These notes have three somewhat distinct purposes.
    
    For the first, after remarking that median graphs have diagonal magnitude homology, one is led to wonder if the same could be said of median metric spaces.
    The first obvious obstacle to an answer is the fact that diagonality is only defined for graphs, but after reinterpreting and recasting it in the metric language, the question can be asked again.
    By verifying stability properties for diagonality and using an equivalence between finite median metric spaces and finite median graphs, one is finally led to a positive answer.

    The second is the verification that the Künneth and Mayer-Vietoris formulae for graph magnitude homology do indeed work in the metric setting (with appropriate adaptations).
    This is essentially doing the grunt work of going through the proofs of~\cite{HW} and checking that everything said of graphs still makes sense for metric spaces, and adapting arguments when needed. 

    Finally, while writing these notes, we remarked that once \emph{betweenness}\footnote{$y$ is between $x$ and $z$ if $d(x,y)+d(y,z)=d(x,z)$.} is defined, most arguments can be worked out without appealing to either notion of length or distance, instead relying only on betweenness.
    Thus, we strove to make this reliance on betweenness as apparent as possible, while de-emphasising the length grading.

    The last section is dedicated to developing this idea:
    We introduce two tentative “betweenness” categories for which the magnitude complex naturally applies, and relate them to the usual magnitude complex on metric spaces. 

    We now briefly introduce our working definitions; more motivation is to be found in~\cite{HW} and~\cite{LS}.

    \subsection{Metric Spaces}

    We consider only classical\footnote{The distance function is non-degenerate, $\RR_{\geq 0}$-valued symmetric and satisfies the triangle inequality.} metric spaces for simplicity's sake, although we expect everything said below to hold when infinite distances are allowed.
    Sequences of points in $X$ are written using pointy brackets $\xx = \langle x_0,\dots ,x_k\rangle $. 
    If consecutive elements are different ($\forall 0\leq i<k:\ x_i\neq x_{x+1}$), we call such a sequence a \emph{($k$-)\ppath{}}; the set of $k$-\ppath{}s in $X$ is written $\Paths_k(X)$, and the set of all \ppath{}s $\Paths(X)$.
    Given two points $x,y\in X$, we say that a third point $z\in X$ lies \emph{between} them if
    \[
        d(x,y) = d(x,z) + d(z,y).
    \]
    In other words, $z$ realises the triangle inequality.
    If furthermore $z\neq x,y$, we say that $z$ lies \emph{strictly between} $x$ and $y$.
    We write $[x,y]$ for the points between $x$ and $y$ and $]x,y[$ those strictly between.
    We call $[x,y]$ and $]x,y[$ \emph{intervals} for obvious reasons.
    A $k$-\ppath{} $\xx$ is \emph{saturated} if each strict interval $]x_i,x_{i+1}[$ is empty.
    A metric space is \emph{Menger convex} if no strict interval is empty.

    A map $f:X\rightarrow Y$ between metric spaces is non-expanding (or $1$-Lipschitz) if for all $x,x'\in X$, we have
    \[
        d(fx,fx') \leq  d(x,x').
    \]

    A subset $A$ of a metric space $X$ is convex if for all $a,b\in A$, the interval $[a,b]$ in $X$ is contained in $A$; in other words, any point between points of $A$ is in $A$.
    Note that this definition is stronger than the one found in~\cite{HW} for graphs.

    If $X$ is a set and $A$ is a net, we call a filtration $(U_\alpha )_{\alpha \in A}$ a sequence of subsets of $X$ with $\bigcup _\alpha  U_\alpha  = X$ and such that for any $\alpha \leq \beta $, we have $U_\alpha  \subseteq  U_\beta $. 
    
    If $Y$ is a subspace of $X$, a retraction of $X$ onto $Y$ is a $1$-Lipschitz map $f:X\rightarrow Y$ satisfying $f|Y = \Id_Y$.

    \subsection{Graphs}

    We view graphs as metric spaces with distance valued in $\NN$ and induced by edges.
    A $1$-Lipschitz map is therefore a map from vertices to vertices that either leaves edges intact or collapses them.
    Since we do not work with infinite distances, we assume our graphs are connected.

    When considering a subset of vertices of a graph, the distance on this subset is the restriction of the original distance to the subset, not the induced distance.
    In particular, subsets of vertices are not interpreted as subgraphs.

    Since our working definitions differ from the ones in~\cite{HW}, some care has to be taken.

    \subsection{Magnitude Homology}

    \begin{definition}[Magnitude complex]
        Let $X$ be a metric space.
        The \emph{magnitude complex} of $X$ is the chain complex whose $k$-th module is:
        \[
            \MC_k(X) := \ZZ \{\xx\ :\ \text{a $k$-\ppath{} in $X$}\}
        \]
        (the free abelian group on the set of all $k$-\ppath{}s in $X$) and with boundary map
        \[
            \partial _k := \sum _{i=1}^{k-1} (-1)^i\partial _{k,i}: \MC_k(X) \rightarrow  \MC_{k-1}(X),
        \]
        where $\partial _{k,i}$ is defined as:
        \[
            \partial _{k,i} \langle x_0,\dots ,x_i,\dots ,x_k\rangle  := 
            \begin{cases}
                \langle x_0,\dots ,\widehat{x_i},\dots ,x_k\rangle   & \text{ if $x_i$ is between $x_{i-1}$ and $x_{x+1}$;}\\
                0                       & \text{ otherwise.}
            \end{cases}
        \]
        We will write $\MZ_k(X)$ for the kernels and $\MB_k(X)$ for the images of $\partial $ respectively, so that magnitude homology becomes
        \[
            \MH_k(X) := \MZ_k(X)/\MB_k(X).
        \]
        The magnitude complex enjoys a grading on $\RR ^{\geq 0}$ by letting $\MC_k^l(X)$ be spanned by the $k$-\ppath{}s of length $l$.

        A non-expanding ($1$-Lipschitz) map $f:X\rightarrow Y$ induces a morphism of magnitude complexes by letting:
        \[
            \MC_k(f)\langle x_0,\dots ,x_k\rangle  := 
            \begin{cases}
                \langle fx_0,\dots ,fx_k\rangle     & \text{ if $l\langle fx_0,\dots ,fx_k\rangle  = l\langle x_0,\dots ,x_k\rangle $;}\\
                0               & \text{ otherwise.}
            \end{cases}
        \]
        
    \end{definition}
    Thus, $\MC_*(\bullet )$ defines a functor from the category of metric spaces with non-expanding maps to the category of $\ZZ $-modules with $\RR ^{\geq 0}$-grading.

    \begin{remark}[On length]
        In the remainder of these notes, we will try to de-emphasise the “length grading” of magnitude homology.
        On the one hand, the graded versions can in all cases at hand be recovered easily.
        On the other, the homogenisation obtained by forgetting about length allows both leaner formulations of the main results (Künneth, Excision, Diagonality) and gives us more flexibility.
        In particular, when proving diagonality of median spaces, we make use of the equivalence between finite median spaces and finite median graph in terms of betweenness; this would not work while trying to preserve length.
        
        In short, it seems to us that the notion of \emph{betweenness} plays a more important role than length itself in magnitude homology.
        
        The part where length \emph{is} important is in the definition of induced maps in magnitude for $1$-Lipschitz maps. 
        See the last section for an attempt at a length-free approach.
    \end{remark}

    \section{Median Graphs}

    \begin{definition}[Median graphs]
        A graph $X$ is said to be \emph{median} if, for any three pairwise distinct points $x,y,z$, the intersection $[x,y]\cap [y,z]\cap [x,z]$ consists of a single point, written $m(x,y,z)$.
    \end{definition}
        
    In this section, and as preparation for the next ones, we prove the following:
    \begin{proposition}\label{proposition:median_graphs_are_diagonal}
        Median graphs are diagonal (in the sense of Hepworth and Willerton).
    \end{proposition}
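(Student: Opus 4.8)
The plan is to deduce diagonality of an arbitrary median graph from that of hypercubes, via three stability principles — products (Künneth), retracts, and directed unions — together with one classical structural fact about median graphs; no single step is hard, the content is in combining them. As a warm-up, in the one-edge graph $K_2$ every $k$-\ppath{} alternates between the two vertices, hence has length $k$ and satisfies $x_{i-1}=x_{i+1}$ for $0<i<k$; since $d(x_{i-1},x_{i+1})=0\neq 2$, no interior vertex lies between its neighbours, $\partial$ vanishes identically, and $\MH_*(K_2)=\MC_*(K_2)$ is free abelian and concentrated in bidegrees $(k,k)$. For a hypercube $Q^n=K_2^{\square n}$ with the $\ell^1$ product metric, both the homological degree and the length are additive under $\square$, so the graph Künneth formula of \cite{HW} and an induction on $n$ show that $\MH_*(Q^n)$ is again free abelian — whence the Tor summand of the Künneth sequence vanishes — and concentrated on the diagonal, since a summand $\MH_{k_1}^{l_1}(Q^{n-1})\otimes\MH_{k_2}^{l_2}(K_2)$ lands in bidegree $(k_1+k_2,\,l_1+l_2)$ and both tensor factors force $k_i=l_i$.

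\emph{From hypercubes to all median graphs.} A finite median graph $G$ is an isometric subgraph of a finite hypercube $Q^n$ (the half-space / $\Theta$-class embedding), of which it is moreover a retract, by the classical theorem of Bandelt that finite median graphs are precisely the retracts of finite hypercubes. Since $\MC_*$ is a functor, $r\circ\iota=\Id_G$, and $\iota$ preserves lengths, the composite $\MH_k^l(r)\circ\MH_k^l(\iota)$ is the identity, so $\MH_k^l(G)$ is a direct summand of $\MH_k^l(Q^n)$ in every bidegree and $G$ is diagonal. For an arbitrary median graph $G$, every finite set of vertices lies in its convex hull, which is finite (the vertices involved differ from a fixed basepoint in only finitely many $\Theta$-coordinates, so the hull sits inside a finite — hence convex — sub-hypercube of $G$) and is again a median graph whose metric is the restriction of that of $G$. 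Thus $G$ is the directed union of finite median subgraphs $G_\alpha$; as betweenness, and therefore $\partial$, is read off the restricted metric, $\MC_*^l(G)$ is the filtered colimit of the $\MC_*^l(G_\alpha)$, and exactness of filtered colimits gives $\MH_k^l(G)=\operatorname{colim}_\alpha\MH_k^l(G_\alpha)$, which vanishes for $k\neq l$ by the finite case.

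The real work, beyond invoking Bandelt's theorem and the $\Theta$-class embedding, is grading bookkeeping: one checks that induced maps on magnitude chains respect the length grading — a \ppath{} goes to one of equal length or to $0$ — so that for an isometric $\iota$ the composite $r\circ\iota$ already acts as the identity at the chain level; and one verifies that convex hulls of finite vertex sets in a median graph are finite, which is exactly what makes the last step a filtration by \emph{finite}, hence diagonal, median graphs. These are the points I expect to require care; the surrounding homological algebra is formal.
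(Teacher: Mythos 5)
Your proof is correct, and it uses exactly the same three stability principles as the paper (\Cref{proposition:products_of_diagonal_graphs_are_diagonal,proposition:retracts_of_diagonal_graphs_are_diagonal,proposition:filtrations_of_diagonal_graphs_are_diagonal}), but it applies them in the opposite order, which makes it a genuinely different — and slightly more expensive — route. The paper takes an arbitrary median graph $X$, writes it as a retract of a single (possibly infinite) hypercube $Q$ via Bandelt's theorem in full cardinality, filters $Q$ by finite sub-hypercubes (which are diagonal by Künneth), and concludes by one application of the retract lemma; the only structural input beyond Bandelt is the trivial observation that a hypercube is the directed union of its finite sub-hypercubes. You instead settle the finite case first (finite median graph = retract of a finite hypercube) and then filter an arbitrary median graph by the convex hulls of its finite vertex sets. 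This works, but it forces you to import an additional nontrivial structural fact — that convex hulls of finite sets in a median graph are finite, median, and isometrically embedded — which the paper's ordering avoids entirely for graphs (it only invokes the analogous ``finite median hull'' fact later, for median \emph{metric spaces}, where no Bandelt-type theorem is available and your ordering becomes the only option). So your argument is a valid preview of the paper's Section 5 strategy; for the graph case the paper's arrangement is leaner because the filtration lives in the hypercube, where finiteness of the filtration pieces is immediate. One cosmetic caution: your phrase ``finite — hence convex — sub-hypercube of $G$'' should read ``of $Q$'' (or ``$B\cap G$ for a finite box $B\subseteq Q$''), since $G$ itself need not contain sub-hypercubes; and note that for the surjectivity of $\MH_k^l(r)$ functoriality of $\MC_*$ applied to $r\circ\iota=\Id$ already suffices, without separately checking that $\iota$ preserves lengths.
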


    The following beautiful characterisation of median graphs, due to Bandelt, will be crucial in the sequel:
    \begin{theorem}[{\cite[Theorem 2]{B}}]
        Median graphs are precisely the retracts of hypercubes.
    \end{theorem}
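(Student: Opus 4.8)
The plan is to prove the two inclusions separately, treating ``retracts of hypercubes are median'' first as the easier direction. First I would check that a hypercube $Q=\{0,1\}^I$ (with the Hamming graph metric; when $I$ is infinite I take the finitely supported cube to keep distances finite) is itself median: for three vertices $x,y,z$ the coordinatewise majority vertex $m$, with $m_i=\operatorname{maj}(x_i,y_i,z_i)$, lies in each of $[x,y]$, $[y,z]$, $[x,z]$ — since $m_i\in\{x_i,y_i\}$ on every coordinate — and is the unique such vertex, so $Q$ is median with $m(x,y,z)=m$. Next I would show medianness passes to retracts. Given a retraction $r:X\to Y$ of a median graph $X$ and points $a,b,c\in Y$, I would argue that $m:=m_X(a,b,c)$ already lies in $Y$: applying $r$ and using that it is $1$-Lipschitz and fixes $a,b,c$ gives $d(a,r(m))+d(r(m),b)\le d(a,m)+d(m,b)=d(a,b)$, forcing equality, so $r(m)\in[a,b]\cap Y$ and similarly for the other two intervals; uniqueness of the median in $X$ then identifies $r(m)$ with $m$, whence $m\in Y$ and $Y$ is median. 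Since a retract carries the restricted (hence isometric) metric, as in the paper's conventions, combining these two facts shows every retract of a hypercube is median.

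For the converse I would realise a median graph $G$ as a subspace of a cube via an isometric embedding, and then produce a retraction. For the embedding I would use the Djoković--Winkler relation $\Theta$ and the associated \emph{splits}: for each edge $uv$ set $W_{uv}=\{x:d(x,u)<d(x,v)\}$, and note that in a median graph these halfspaces are convex, that $\{W_{uv},W_{vu}\}$ partitions the vertices, and that $\Theta$ (``lying in the same split'') is an equivalence relation on edges. Indexing the splits by $I$ and orienting each of them, the map $\iota:G\to\{0,1\}^I$ that records, for each split, which side a vertex lies on is an isometry onto its image, because $d(x,y)$ equals the number of splits separating $x$ and $y$, i.e.\ the Hamming distance of their images.

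It remains to retract the cube onto $\iota(G)$, and this is the step I expect to be the main obstacle. The key preliminary observation is that $\iota(G)$ is closed under the cube's majority median: for $a,b,c\in\iota(G)$ the cube-median is the unique cube-vertex lying in all three cube-intervals, and since $\iota$ is isometric the $G$-median also lies there, so the two coincide and $\iota(G)$ is median-closed. I would then define the retraction $r$ by sending $q\in Q$ to the point of $\iota(G)$ closest to $q$, and prove it is well defined, fixes $\iota(G)$, and is $1$-Lipschitz by showing that $\iota(G)$ is \emph{gated}: every $q\in Q$ admits a vertex $g^{*}\in\iota(G)$ lying on a geodesic from $q$ to every point of $\iota(G)$. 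Establishing gatedness is the crux, and I would derive it from the median structure — either by an induction that flips a single coordinate of $q$ toward $\iota(G)$ at a time, each flip being forced and hence canonical, or from the Helly property of convex sets in a median graph applied to the halfspaces selected by the coordinates of $q$ together with a ball around $q$. Once gatedness is in hand, the nearest point is unique and the gate map is automatically $1$-Lipschitz and the identity on $\iota(G)$, which exhibits $G$ as a retract of $Q$ and completes the converse.
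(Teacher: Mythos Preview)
The paper does not prove this theorem at all: it is quoted verbatim from Bandelt~\cite{B} and used as a black box in the proof that median graphs are diagonal. There is therefore no ``paper's own proof'' to compare your proposal against.

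That said, your sketch contains a genuine gap in the hard direction. The claim that the isometric image $\iota(G)\subseteq Q$ is \emph{gated} is false in general, so the nearest-point map you propose is neither well defined nor the retraction you need. A minimal counterexample: take $G=P_3$, the path on three vertices $a\!-\!b\!-\!c$, embedded isometrically in $Q_2=\{0,1\}^2$ as $\{00,01,11\}$. The remaining cube vertex $q=10$ has two nearest image points, $00$ and $11$, and a short check shows that none of $00,01,11$ lies between $q$ and every image point; hence $\iota(G)$ is not gated and there is no gate map. Nevertheless a retraction exists (send $10\mapsto 01$), but it is \emph{not} a nearest-point projection. More generally, median-closed subsets of a hypercube need not be convex, and in a median graph gatedness coincides with convexity, so your route via gatedness cannot succeed.

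Bandelt's actual argument (and the standard ones since) builds the retraction differently: one shows that a median-closed subset of a median graph is a retract by an algebraic/median construction --- e.g.\ by iterated folding along $\Theta$-classes, or by defining $r(q)$ via medians with points of $\iota(G)$ --- rather than by a metric nearest-point projection. Your first direction (hypercubes are median, and medianness passes to retracts) is fine; it is the existence of the retraction onto $\iota(G)$ that needs a different idea.
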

    
    Note that in the above, no restriction on cardinality is imposed.
    
    We now only need three simple properties of diagonality, whose proofs we will not linger on, since generalisations will come in~\Cref{section:diagonality}.
    
    \begin{proposition}[{\cite[Proposition 35]{HW}}]\label{proposition:products_of_diagonal_graphs_are_diagonal}
        Cartesian products of diagonal graphs are diagonal.
    \end{proposition}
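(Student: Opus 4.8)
The plan is to deduce the statement from the Künneth formula of~\cite{HW}, the only point with real content being that the diagonal part $\MH_k^k$ of the magnitude homology of a graph is automatically torsion-free, so that the Tor term produced by Künneth cannot survive on the diagonal. Recall that a graph $X$ is \emph{diagonal} when $\MH_k^\ell(X)=0$ for $k\neq\ell$, and that the Cartesian product $X\times Y$ of graphs carries the metric $d\big((x,y),(x',y')\big)=d(x,x')+d(y,y')$.

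First I would invoke the Künneth short exact sequence of~\cite{HW}: for all $k$ and $\ell$ there is a sequence, natural in $X$ and $Y$,
\[
    0 \to \bigoplus_{\substack{a+b=\ell\\ p+q=k}} \MH_p^a(X)\otimes_\ZZ\MH_q^b(Y) \;\to\; \MH_k^\ell(X\times Y) \;\to\; \bigoplus_{\substack{a+b=\ell\\ p+q=k-1}} \operatorname{Tor}^\ZZ\!\big(\MH_p^a(X),\MH_q^b(Y)\big) \to 0 .
\]
Assuming $X$ and $Y$ diagonal, $\MH_p^a(X)$ vanishes unless $p=a$ and $\MH_q^b(Y)$ vanishes unless $q=b$; so a nonzero summand on the left forces $p=a$, $q=b$, whence $k=p+q=a+b=\ell$, and a nonzero summand in the Tor term forces in the same way $k-1=p+q=a+b=\ell$. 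Thus the left-hand term is zero unless $k=\ell$ and the Tor term is zero unless $k=\ell+1$; in particular, once the Tor term is shown to vanish we immediately get $\MH_k^\ell(X\times Y)=0$ for $k\neq\ell$ and $\MH_k^k(X\times Y)\cong\bigoplus_{a+b=k}\MH_a^a(X)\otimes_\ZZ\MH_b^b(Y)$.

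The remaining step is to see that $\operatorname{Tor}^\ZZ\!\big(\MH_a^a(X),\MH_b^b(Y)\big)=0$, for which I would note that the diagonal homology of a graph is free abelian. Indeed, distinct vertices of a graph are at distance $\geq 1$, so every $(k+1)$-\ppath{} has length $\geq k+1$; hence $\MC_{k+1}^k(X)=0$, so $\MB_k^k(X)=\partial\big(\MC_{k+1}^k(X)\big)=0$, and therefore $\MH_k^k(X)=\MZ_k^k(X)$ is a subgroup of the free abelian group $\MC_k^k(X)$, hence free — and likewise for $Y$. Each Tor group above then has a free factor and vanishes, so $X\times Y$ is diagonal; associativity of the product (or a one-line induction) extends this to any finite Cartesian product.

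The only delicate point is the bigrading bookkeeping in the Künneth sequence — that homological degrees and lengths both add — together with the torsion-freeness observation, which is precisely what makes the argument work over $\ZZ$ rather than merely over a field; I anticipate no genuine obstacle beyond these. The general statements needed later (for retracts and filtrations) will be proved in full form in their own right.
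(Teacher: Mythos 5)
Your proposal is correct and follows exactly the route the paper takes (which itself defers to~\cite[Proposition 35]{HW}): apply the bigraded Künneth sequence and kill the Tor term by observing that $\MC_{k+1}^k=0$ for a graph, so $\MH_k^k=\MZ_k^k$ is a subgroup of a free abelian group and hence torsion-free. Your write-up just spells out the bookkeeping that the paper leaves implicit.
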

    \begin{proof}
        By applying the Künneth formula and noting that diagonal graphs have torsion-free homologies.
        See~\cite[Proposition 35]{HW}.
    \end{proof}
    \begin{proposition}\label{proposition:retracts_of_diagonal_graphs_are_diagonal}
        Retracts of diagonal graphs are diagonal.
    \end{proposition}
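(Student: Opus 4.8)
The plan is to use nothing more than the functoriality of the magnitude complex together with the fact that a retraction splits. Recall that a graph $Z$ is diagonal when $\MH_k^l(Z) = 0$ for all $k \neq l$; since a $k$-\ppath{} has length at least $k$, the module $\MH_k^l(Z)$ already vanishes for $k > l$, so the real content is the vanishing for $k < l$. The point to exploit is that this is a \emph{vanishing} condition on the graded pieces, and such conditions pass to direct summands.

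So let $Y$ be a retract of a diagonal graph $X$, witnessed by the inclusion $\iota : Y \hookrightarrow X$ and a $1$-Lipschitz retraction $r : X \to Y$ with $r \circ \iota = \Id_Y$. Both $\iota$ and $r$ are $1$-Lipschitz, hence induce morphisms of magnitude complexes, and since $\MC_*(-)$ is a functor to $\RR^{\geq 0}$-graded $\ZZ$-modules we get $\MC_*(r) \circ \MC_*(\iota) = \MC_*(r \circ \iota) = \MC_*(\Id_Y) = \Id_{\MC_*(Y)}$. Passing to homology in each bidegree $(k,l)$ yields maps $\MH_k^l(\iota) : \MH_k^l(Y) \to \MH_k^l(X)$ and $\MH_k^l(r) : \MH_k^l(X) \to \MH_k^l(Y)$ whose composite is the identity of $\MH_k^l(Y)$. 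Thus $\MH_k^l(\iota)$ is a split monomorphism, and $\MH_k^l(Y)$ is isomorphic to a direct summand of $\MH_k^l(X)$.

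Now if $X$ is diagonal, then $\MH_k^l(X) = 0$ whenever $k \neq l$, so the summand $\MH_k^l(Y)$ also vanishes for $k \neq l$; that is, $Y$ is diagonal. I do not expect a genuine obstacle here: the only step worth a moment's care is checking that the induced maps respect the $\RR^{\geq 0}$-grading, which is immediate from the definition of $\MC_k(f)$, where a $k$-\ppath{} is sent either to a \ppath{} of the same length or to $0$. The same argument, phrased for filtrations and more general retracts, will recur in greater generality in \Cref{section:diagonality}.
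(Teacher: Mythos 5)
Your argument is correct and is essentially the paper's own: both use functoriality of $\MH_k^l$ applied to the identity $r\circ\iota=\Id_Y$ to conclude that $\MH_k^l(Y)$ is a (split) summand of $\MH_k^l(X)$, equivalently that $\MH_k^l(r)$ is surjective, so off-diagonal vanishing passes from $X$ to $Y$. No substantive difference.
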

    \begin{proof}
        If $f:X\rightarrow Y$ is a retraction, it has left inverse the inclusion $\iota :Y\rightarrow X$, by definition.
        Functoriality of $\MH_k^l$ implies that $\MH_k^l(f):\MH_k^l(X)\rightarrow \MH_k^l(Y)$ is surjective.
        In particular, $\MH_k^l$ being zero outside the diagonal for $X$ implies the same for $Y$.
    \end{proof}
    \begin{proposition}\label{proposition:filtrations_of_diagonal_graphs_are_diagonal}
        Graphs with filtrations by diagonal graphs are diagonal. 
    \end{proposition}
    \begin{proof}
        Let $(U_\alpha )_\alpha $ a filtration of $X$.
        Then $\MZ_k^l(X) = \bigcup _\alpha  \MZ_k^l(U_\alpha )$ and $\MB_k^l(X) = \bigcup _\alpha  \MB_k^l(U_\alpha )$.
        If for all $\alpha $, and $k\neq l$, $\MZ_k^l(U_\alpha ) = \MB_k^l(U_\alpha )$, then $\MZ_k^l(X) = \MB_k^l(X)$ and the homology vanishes outside the diagonal.
    \end{proof}

    We can now proceed with the proof:
    \begin{proof}[Proof of \Cref{proposition:median_graphs_are_diagonal}]
        Fix $X$ a median graph and $Q$ a hypercube of which $X$ is a retract.
        $Q$ has a filtration by finite hypercubes, which are diagonal (\Cref{proposition:products_of_diagonal_graphs_are_diagonal}); hence so is $Q$ (\Cref{proposition:filtrations_of_diagonal_graphs_are_diagonal}), and $X$ (\Cref{proposition:retracts_of_diagonal_graphs_are_diagonal}).
    \end{proof}

    \section{Künneth and Excision}
    
    In~\cite{HW}, Hepworth and Willerton described versions of the Künneth and Excision theorems applying to magnitude homology of graphs.
    In~\cite{LS}, Leinster and Shulman, extending magnitude homology to metric spaces\footnote{among other things}, asked whether Künneth and Excision extend to this new setting.

    The answer is yes, assuming the right reinterpretations.

    \begin{proposition}[Künneth theorem – metric setting]
        Hepworth and Willerton's statement and proof of the Künneth theorem (\cite[Theorem 21]{HW}) extends verbatim to $l^{1}$ products of metric spaces. 
    \end{proposition}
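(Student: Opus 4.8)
The plan is to transplant Hepworth and Willerton's argument essentially word for word; the only real task is to isolate the single geometric fact about graph Cartesian products that their proof uses and to re-establish it in the metric setting. That fact is the identity $d_{G\square H}=d_G+d_H$, and its metric counterpart is a coordinatewise decomposition of betweenness in the $\ell^1$ product $X\times_1 Y$ (underlying set $X\times Y$, distance $d((x,y),(x',y'))=d_X(x,x')+d_Y(y,y')$, which is again a classical metric space and which for graphs recovers the graph Cartesian product $G\square H$).

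First I would prove: $(x,y)$ lies between $(x_0,y_0)$ and $(x_1,y_1)$ in $X\times_1 Y$ if and only if $x\in[x_0,x_1]$ in $X$ and $y\in[y_0,y_1]$ in $Y$. Expanding the defining triangle equality in the $\ell^1$ distance and regrouping shows that the sum of the two individually non-negative (by the factorwise triangle inequalities) ``defects'' $d_X(x_0,x)+d_X(x,x_1)-d_X(x_0,x_1)$ and $d_Y(y_0,y)+d_Y(y,y_1)-d_Y(y_0,y_1)$ vanishes, so both vanish. This is the only step in the whole argument where $\ell^1$ — as opposed to any other $\ell^p$ — is used. I would pair it with the obvious additivity of length, $l(\xx)=l(\mathrm{pr}_X\xx)+l(\mathrm{pr}_Y\xx)$ for any path $\xx$ in $X\times_1 Y$ (projections followed by deletion of repetitions), which is the $\RR^{\geq 0}$-graded refinement the proof also needs.

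Next I would run Hepworth and Willerton's chain-level comparison: a pair of mutually-homotopy-inverse, $\RR^{\geq 0}$-graded chain maps between $\MC_*(X\times_1 Y)$ and $\MC_*(X)\otimes\MC_*(Y)$, of Eilenberg--Zilber/shuffle and Alexander--Whitney type, given by the same formulas as in~\cite{HW}. The point to check — and where all the bookkeeping lives — is that these formulas still define chain maps despite the magnitude differential having only \emph{partially} defined face operators ($\partial_{k,i}$ kills $\xx$ unless $x_i$ is between its neighbours); this is exactly where the betweenness decomposition and the length additivity get invoked, and the verification is insensitive to the graph-versus-metric distinction. I expect this to be the main obstacle, though it is grunt work rather than a new idea: one reproduces the verifications of~\cite{HW} with ``between'' in place of ``on a geodesic of the graph''.

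Finally, since $\MC_*(X)$ and $\MC_*(Y)$ are complexes of free abelian groups, I would invoke the algebraic Künneth theorem over $\ZZ$ for $\MC_*(X)\otimes\MC_*(Y)$ and transport the result across the homotopy equivalence, reading off the $\RR^{\geq 0}$-grading (equivalently, applying algebraic Künneth in each bidegree $(l_1,l_2)$ of lengths and summing over $l_1+l_2=l$). This produces the natural short exact sequence
\[
0\to\bigoplus_{\substack{k_1+k_2=k\\ l_1+l_2=l}}\MH_{k_1}^{l_1}(X)\otimes\MH_{k_2}^{l_2}(Y)\to\MH_k^l(X\times_1 Y)\to\bigoplus_{\substack{k_1+k_2=k-1\\ l_1+l_2=l}}\mathrm{Tor}\big(\MH_{k_1}^{l_1}(X),\MH_{k_2}^{l_2}(Y)\big)\to 0,
\]
which splits non-naturally (both tensor factors being complexes of free abelian groups over the PID $\ZZ$). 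The only genuine departure from the graph case is that the index set $\{(l_1,l_2):l_1+l_2=l\}$, finite for graphs, may be infinite here — an entirely harmless change, since direct sums are exact and preserve split exact sequences.
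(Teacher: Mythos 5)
Your proposal is correct and follows essentially the same route as the paper: the one genuinely metric input is the coordinatewise betweenness identity $[(x,y),(x',y')]=[x,y]\times[x',y']$ in the $l^{1}$ product, after which the Hepworth--Willerton cross product, the Eilenberg--Zilber comparison with $\MC_*(X)\otimes\MC_*(Y)$, and the algebraic K\"unneth theorem for free $\ZZ$-complexes go through unchanged. The paper packages the middle step via the isomorphism of pointed simplicial sets $\MS(X)\wedge\MS(Y)\cong\MS(X\times Y)$ and the abstract quasi-isomorphism property of the reduced Eilenberg--Zilber map rather than an explicit Alexander--Whitney homotopy inverse, but this is a presentational difference only.
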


    Beware that in the category of metric spaces and non-expanding maps, the $l^{1}$ product is \emph{not} the categorical product.

    \begin{proposition}[Excision formula – metric case]
        Hepworth and Willerton's statement and proof of the Excision formula (\cite[Theorem 29]{HW}) extends to so-called \emph{gated} decompositions of metric spaces with minimal changes. 
        The same is true for the Mayer-Vietoris theorem.
    \end{proposition}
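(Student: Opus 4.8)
The plan is to transcribe \cite[Section~6]{HW} almost word for word, replacing the combinatorial hypothesis of a projecting decomposition by its metric avatar. Recall that a subset $W\subseteq X$ is \emph{gated} if every $x\in X$ admits a (necessarily unique) point $\pi_W(x)\in W$, its \emph{gate}, which lies between $x$ and every point of $W$; such a $W$ is automatically convex, and $\pi_W\colon X\to W$ is automatically $1$-Lipschitz and restricts to $\Id_W$. Call $X = U\cup V$ a \emph{gated decomposition} if $U$ and $V$ are convex, $W := U\cap V$ is gated in $X$, and $W$ separates $U$ from $V$ in the sense that $[u,v]\cap W\neq\emptyset$ whenever $u\in U$ and $v\in V$; a short triangle-inequality chase then shows that for such $u,v$ the gate $\pi_W(v)$ in fact lies in $[u,v]$. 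This is what keeps the whole construction compatible with the $\RR^{\geq 0}$-grading: since $d(u,v) = d(u,\pi_W(v)) + d(\pi_W(v),v)$, inserting a gate into a \ppath{} never alters its length.

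First I would record the relative magnitude complexes. For a subspace $A\subseteq X$ the inclusion $\MC_*^l(A)\hookrightarrow\MC_*^l(X)$ is a split monomorphism of graded groups, with free cokernel $\MC_*^l(X,A)$ spanned by the \ppath{}s of $X$ not lying entirely in $A$; so the relative groups $\MH_*^l(X,A)$ and the long exact sequence of the pair are available formally, just as in \cite{HW}. The substance is the \textbf{excision statement}: the commuting square of inclusion-induced maps with corners $\MC_*^l(W)$, $\MC_*^l(U)$, $\MC_*^l(V)$, $\MC_*^l(X)$ is a homotopy pushout in each length $l$; equivalently, the quotient complex $\MC_*^l(X)\big/\bigl(\MC_*^l(U)+\MC_*^l(V)\bigr)$ --- freely generated by the \ppath{}s that possess a vertex in $U\setminus W$ and a vertex in $V\setminus W$ --- is acyclic. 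One builds an explicit contracting homotopy $h$ on this quotient: in such a \ppath{}, locate the first index $i$ at which it leaves $U$ (reducing to this case by symmetry) and splice in the gate $\pi_W(x_{i+1})\in W$ between $x_i$ and $x_{i+1}$; separation guarantees this point is strictly between the two vertices, so the relevant face maps do not vanish, and one verifies $\partial h + h\partial = \Id$ modulo $\MC_*^l(U)+\MC_*^l(V)$, with the sign conventions and the routine-but-delicate bookkeeping when the \ppath{} already meets $W$ handled exactly as in \cite[Section~6]{HW}. Excision, $\MH_k^l(U,W)\cong\MH_k^l(X,V)$, then drops out.

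Granting excision, the Mayer--Vietoris sequence
\[
\cdots\to\MH_k^l(W)\to\MH_k^l(U)\oplus\MH_k^l(V)\to\MH_k^l(X)\to\MH_{k-1}^l(W)\to\cdots
\]
is pure homological algebra: splice the long exact sequences of the pairs $(U,W)$ and $(X,V)$ along the excision isomorphism, precisely as in the topological prototype followed by \cite{HW}. I expect the one genuine obstacle to be fixing the metric definition of ``gated decomposition'' so that every property of the projection used in \cite{HW} --- being $1$-Lipschitz, fixing $W$, the betweenness identity, and separation --- has the verified analogue above; once that dictionary is in place, each remaining step transcribes verbatim, and, because gates preserve length, the length bookkeeping is unchanged throughout. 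The Mayer--Vietoris half introduces no difficulty beyond the graph case.
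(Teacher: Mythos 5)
There is a genuine gap at the heart of your excision argument. You propose to contract the quotient $\MC_*(X)/\bigl(\MC_*(U)+\MC_*(V)\bigr)$ in one shot, by a homotopy $h$ that locates the first index $i$ at which a \ppath{} leaves $U$ and splices in the gate $\pi_W(x_{i+1})$ there. This $h$ does not satisfy $\partial h + h\partial = \Id$, even modulo $\MC_*(U)+\MC_*(V)$: a face such as $\partial_{i+1}$, which deletes $x_{i+1}$, can produce a \ppath{} whose first exit from $U$ occurs at a completely different position (e.g.\ when $x_{i+2}\in U$), so $h\,\partial_{i+1}\xx$ inserts a gate somewhere else entirely and has no cancelling partner among the faces of $h\xx$. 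Concretely, for $\xx=\langle u,v,u',v'\rangle$ with $u,u'\in U\setminus W$ and $v,v'\in V\setminus W$ (and the relevant betweenness relations holding), $h\,\partial_1\xx=\pm\langle u,u',\pi_W(v'),v'\rangle$, a \ppath{} that is neither in $\MC_*(U)+\MC_*(V)$ nor a face of $h\xx=\pm\langle u,\pi_W(v),v,u',v'\rangle$, since cancelling it would require deleting two vertices at once. This is precisely why neither Hepworth--Willerton nor the paper argues this way: the gate-insertion homotopy is only ever applied to the small complexes $A_*(a,b)$ of \ppath{}s with fixed endpoints $a\in Y\setminus Z$, $b\in Z\setminus Y$ and all interior points in $W$, where the insertion position is pinned at the final vertex and cannot migrate under $\partial$. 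The global statement is then assembled by a two-level filtration ($F_*(b;i)$ exhausting $B_*(b)$, and $G_*(i)$ exhausting $\MC_*(X)$) whose associated graded pieces are direct sums of shifted copies of $A_*(a,b)$ and of $B_*(b)/\tilde{B}_*(b)$.

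Relatedly, your assertion that ``each remaining step transcribes verbatim'' and that ``the length bookkeeping is unchanged'' skips over the one place the paper identifies as genuinely requiring modification: for a metric space $\MC_k^l(X)$ need not vanish for $k>l$, so the filtrations that are finite in the graph case (at fixed length $l$) no longer terminate, and one must pass to direct limits of the systems $F_*(b;0)\leq F_*(b;1)\leq\cdots$ and $G_*(0)\leq G_*(1)\leq\cdots$, using that homology commutes with filtered colimits. Your definition of a gated decomposition (convexity of $U,V$, $W$ gated in all of $X$, plus a separation axiom) also differs from the paper's, which asks only that $W$ be gated with respect to $Z$; that is a matter of convention, but the failure of the one-step homotopy is not. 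The deduction of Mayer--Vietoris from excision, and the use of the retraction $\pi$ to split the long exact sequence, are fine.
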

    
    The “minimal changes” understood above are a bit trickier than simple generalisations.
    In particular, the “Metric Excision formula” that we define is not strictly a generalisation of the graph theoretic one of~\cite{HW}, since the definitions we work with are not themselves generalisations of the ones in~\cite{HW}.

    A \emph{gated} decomposition in metric spaces is essentially the translation of a “projecting decomposition” into the metric language: 
    A triple $(X;Y,Z)$ with $X=Y\cup Z$ such that for any point of $Z$, there exists a unique so-called gate for this point lying between it and any point of $Y\cap Z$.
    See~\cite{DS} for details on gates.

    Since our arguments mainly consist in tweaking the original constructions of Hepworth and Willerton, having a copy of~\cite{HW} at hand will prove useful!

    \subsection{Künneth}

    If $X,Y$ are metric spaces, we recall that the Cartesian product $X\times Y$ is endowed with the $l^{1}$ metric:
    \[
        d((x,y),(x',y')) := d(x,x') + d(y,y').
    \]
    This implies that the intervals satisfy the identity:
    \[
        [(x,y),(x',y')] = [x,y]\times [x',y'],
    \]
    which is the reason for the $l^{1}$ metric's appearance in this context.
    Note also that the $l^{1}$ product reduces to the usual Cartesian product in the case of graphs.

    \paragraph{Summary of differences.}
    The arguments in~\cite[Section 8]{HW} go through verbatim for proving the Künneth formula in the case of metric spaces, since the main ingredient is the notion of betweenness, which generalises directly from graphs to metric spaces. 
    Our downplaying of length as a grading of magnitude homology simplifies some expressions by virtue of getting rid of some $\bigoplus _l$s and $\bigvee _l$s; this is syntactical.
    Other than that, we chose to put more emphasis on some arguments that could be suspected of hiding complications; conversely others that are clearly independent of the metric/graph schism are only glossed over.
    In short, our arguments do not provide any new insight, but merely confirm that the generalisation holds.

    We now retrace~\cite[Section 8]{HW} closely, with the metric case in mind.

    \begin{definition}[Interleavings, cross product ({\cite[Definition 20]{HW}})]
        Fix $n,l\in \NN $ and let $k=n+l$, and write $[k]$ for the set $\{0,\dots ,k\}$.

        Call a map $\sigma  = \langle \sigma _h,\sigma _v\rangle :[n+l] \rightarrow  [n]\times [l]$ satisfying
        \begin{itemize}
            \item $\sigma (0)=(0,0)$ and $\sigma (n+l)=(n,l)$;
            \item if $\sigma (i) = (a,b)$, then $\sigma (i+1)$ is either $(a+1,b)$ or $(a,b+1)$,
        \end{itemize}
        a staircase path.
        Write $\st{n,l}$ for the set of $(n,l)$ staircase paths.
        A staircase path is just a geodesic from $(0,0)$ to $(n,l)$ in the obvious grid.
        The sign $\sgn \sigma $ of $\sigma $ is $(-1)^s$, where $s$ is the number of squares “below the staircase”. 

        If $\xx$ is an $n$-\ppath{} in $X$, $\yy$ an $l$-\ppath{} in $Y$, seen as maps $[n] \rightarrow  X,[l] \rightarrow  Y$ (which we will always do) and $\sigma $ a staircase path, the interleaving of $\xx$ and $\yy$ along $\sigma $ is the $k$-\ppath{} $\xx \ovs{\sigma }{\times } \yy$ defined by $ \xx\ovs{\sigma }{\times }\yy := (\xx\times \yy)\circ \sigma $ (with $\xx\times \yy$ seen as a map  $[n]\times [l] \rightarrow  X\times Y$).

        The \emph{cross product}\footnote{but it's a square!} is the morphism of chain complexes:
        \[
            \square : \MC_*(X) \otimes  \MC_*(Y) \rightarrow  \MC_*(X\times Y),
        \]
        sending a tensor $\xx\otimes \yy$ to the alternating sum of its possible interleavings:
        \[
            \xx\otimes \yy \ \ovs{\square }{\mapsto } \sum _{\sigma \in \st{n,l}} \sgn(\sigma ) (\xx \ovs{\sigma }{\times } \yy).
        \]
    \end{definition}
 
    As a token of good will, let us check that $\square $ indeed defines a morphism of chain complexes.
    First, we need some terminology.
    We visualise a staircase path as an actual (irregular) staircase on the $[n]\times [l]$ grid, going from bottom-left $(0,0)$ to top-right $(n,l)$, with horizontal coordinate given by $\xx$  and vertical by $\yy$.
    Then, any $0<m<n+l$ defines a point $\sigma (m)$ on the staircase; exactly one of:
    \begin{description}
        \item[A corner:]
            which means that its predecessor and successor differ in both coordinates.
            There are two distinct types of corners, looking like $\ulcorner $ and $\lrcorner $ respectively.
        \item[A flat:]
            which means that $\sigma _v(m+1) = \sigma _v(m) = \sigma _v(m-1)$ and $\sigma _h(m) = \sigma _h(m-1)+1 = \sigma _h(m+1)-1$.
            Equivalently, $\sigma _v(m)$ has unique preimage $m$.
        \item[A wall:]
            which means that $\sigma _h(m+1) = \sigma _h(m) = \sigma _h(m-1)$ and $\sigma _v(m) = \sigma _v(m-1)+1 = \sigma _v(m+1)-1$.
            Equivalently, $\sigma _h(m)$ has unique preimage $m$.
    \end{description}
    
    Fix an $n$-\ppath{} $\xx$ and an $l$-\ppath{} $\yy$.
    For any index $m$, the value $\partial _m (\xx\ovs{\sigma }{\times }\yy)$ of the interleaving depends on the type of $m$.
    \begin{description}
        \item[$m$ is a corner:]
            then one easily sees that $(\xx\ovs{\sigma }{\times }\yy)_{m}$ is always between $(\xx\ovs{\sigma }{\times }\yy)_{m-1}$ and $(\xx\ovs{\sigma }{\times }\yy)_{m+1}$, but we do not need to know more than that.
        \item[$m$ is a flat:]
            then $(\xx\ovs{\sigma }{\times }\yy)_{m}$ is between $(\xx\ovs{\sigma }{\times }\yy)_{m-1}$ and $(\xx\ovs{\sigma }{\times }\yy)_{m+1}$ iff $\xx\circ \sigma _h(m)$ is between $\xx\circ \sigma _h(m-1)$ and $\xx\circ \sigma _h(m+1)$.
        \item[$m$ is a wall:]
            then $(\xx\ovs{\sigma }{\times }\yy)_{m}$ is between $(\xx\ovs{\sigma }{\times }\yy)_{m-1}$ and $(\xx\ovs{\sigma }{\times }\yy)_{m+1}$ iff $\yy\circ \sigma _v(m)$ is between $\yy\circ \sigma _v(m-1)$ and $\yy\circ \sigma _v(m+1)$.
    \end{description}
   
    If $m$ is a flat of $\sigma $, one can delete the column with coordinate $\sigma _h(m)$ and get a new staircase $\sigma _{\widehat m}$ in $\st{k-1,l}$.
    The sign of $\sigma _{\widehat m}$ differs from that of $\sigma $ by $(-1)^{\sigma _v(m)}$.
    By the above discussion, one concludes that:
    \begin{alignat*}{10}
        \partial _m (\xx \ovs{\sigma }{\times } \yy) &= \partial _{\sigma _h(m)}\xx \ovs{\sigma _{\widehat m}}{\times } \yy,\\
    \intertext{so that, using $\sigma _v(m)+\sigma _h(m)=m$:}
        (-1)^m \sgn(\sigma ) \partial _m (\xx \ovs{\sigma }{\times } \yy) &= (-1)^{\sigma _h(m)}\sgn(\sigma _{\widehat m}) \partial _{\sigma _h(m)}\xx \ovs{\sigma _{\widehat m}}{\times } \yy.
    \intertext{
        Similarly, if $m$ is a wall, $\sigma _{\widehat m}$ is obtained by deleting the row with coordinate $\sigma _v(m)$, the signs differ by $(-1)^{n-\sigma _h(m)}$, and we obtain:
    }
        (-1)^m \sgn(\sigma ) \partial _m (\xx \ovs{\sigma }{\times } \yy) &= (-1)^{n-\sigma _v(m)}\sgn(\sigma _{\widehat m})  \xx\ovs{\sigma _{\widehat m}}{\times }\partial _{\sigma _v(m)}\yy.
    \end{alignat*}

    Note that whenever we have a flat, the horizontal coordinate changes in two consecutive positions. 
    Hence flats are characterised by the positions where $0<i<n$ has a unique preimage under the map $\sigma_h$.
    Using this knowledge, we can define the following sets:
    \begin{alignat*}{10}
        S_h &:= \{(\sigma ,i)\ :\ \sigma \in \st{n,l} \text{ and $\sigma _h^{-1}(i)$ is a well-defined element}\},\\
    \intertext{and}
        S_v &:= \{(\sigma ,j)\ :\ \sigma \in \st{n,l} \text{ and $\sigma _v^{-1}(j)$ is a well-defined element}\}.
    \end{alignat*}
    Then, one easily sees that the map
    \begin{alignat*}{10}
        \st{n-1,l}\times \{1,\dots ,n-1\} \rightarrow  S_h
    \end{alignat*}
    defined by mapping $(\tau,i)$ to $(\sigma,i)$, where $\sigma$ is obtained by inserting a flat at position $\min \tau_h^{-1}(i)$, is bijective. 
    Its inverse is given by $(\sigma ,i) \mapsto (\sigma _{\widehat{\sigma_h^{-1}(i)}},i)$; similarly for
    \begin{alignat*}{10}
        \st{n,l-1}\times \{1,\dots ,l-1\} \rightarrow  S_v.
    \end{alignat*}

    By definition,
    \begin{alignat*}{10}
        \partial  (\xx \square  \yy)    &= \partial  \sum _{\sigma \in \st{n,l}} \sgn(\sigma ) (\xx\ovs{\sigma }{\times }\yy) \\
                         &= \sum _{\sigma \in \st{n,l}} \sgn(\sigma ) \sum _{m=1}^{n+l-1} (-1)^m \partial _m(\xx\ovs{\sigma }{\times }\yy).%\\
    \intertext{
        Consider a given staircase path $\sigma $ and assume index $m$ of $\sigma $ is a corner, say $\ulcorner $.
        Then, there is a unique staircase path $\sigma _m'$ that is equal to $\sigma $ everywhere except at $m$, where it is the opposite corner, say $\lrcorner $.
        Clearly $\sigma $ and $\sigma _m'$ have opposite sign, and thus will cancel out when mapped through $\partial _m$.
        It follows that we can restrict the sum to indices which are not corners:
    }
                        &= \sum _{\sigma \in \st{n,l}} \sgn(\sigma ) \sum _{\substack{m=1\\ \text{not corner}}}^{n+l-1} (-1)^m \partial _m(\xx\ovs{\sigma }{\times }\yy)\\
    \intertext{ and since the remaining indices are either flats or walls:}
                        &= \sum _{\sigma \in \st{n,l}} \sgn(\sigma ) \left(  \sum _{\substack{m\in \{1,\dots ,n+l-1\} \\ \text{flat}}} (-1)^m \partial _m(\xx\ovs{\sigma }{\times }\yy) + \sum _{\substack{m\in \{1,\dots ,n+l-1\} \\ \text{wall}}} (-1)^m \partial _m(\xx\ovs{\sigma }{\times }\yy)  \right)  \\
                        &= \sum _{\sigma ,i\in S_h} (-1)^{\sigma _h^{-1}(i)} \sgn(\sigma ) \partial _{\sigma _h^{-1}(i)}(\xx\ovs{\sigma }{\times }\yy) + \sum _{\sigma ,j\in S_v} (-1)^{n+\sigma _v^{-1}(j)} \sgn(\sigma ) \partial _{\sigma _v^{-1}(j)}(\xx\ovs{\sigma }{\times }\yy)\\
                        &= \sum _{\sigma ,i\in S_h} (-1)^{i} \sgn(\sigma_{\widehat{\sigma_{h}^{-1}(i)}}) \partial _{i}\xx\ovs{\sigma_{\widehat{\sigma_{h}^{-1}(i)}}}{\times }\yy + \sum _{\sigma ,j\in S_v} (-1)^{j} \sgn(\sigma_{\widehat{\sigma_{v}^{-1}(j)}}) \partial _{j}\xx\ovs{\sigma_{\widehat{\sigma_{v}^{-1}(j)}}}{\times }\yy\\
    \intertext{and by the decomposition of staircases discussed above:}
                        &= \sum _{i=1}^{n-1} \sum _{\tau \in \st{n-1,l}} (-1)^i \sgn(\tau ) (\partial _i\xx)\ovs{\tau }{\times }\yy + \sum _{j=1}^{l-1} \sum _{\theta \in \st{n,l-1}} (-1)^{n+j} \sgn(\theta )  \xx\ovs{\theta }{\times }\partial _j\yy.\\
    \intertext{
        By definition, we also have
    }
     \square  (\partial (\xx\otimes \yy))      &=  \square  \left(\partial \xx\otimes \yy + (-1)^{n}\xx\otimes \partial \yy\right) \\
                                &= \sum _{i=1}^{n-1} (-1)^i \partial _i\xx \square  \yy + (-1)^{n}\sum _{j=1}^{l-1} (-1)^j\xx \square  \partial _j\yy\\                     &= \sum _{i=1}^{n-1} \sum _{\tau \in \st{n-1,l}} (-1)^i \sgn(\tau ) (\partial _i\xx\ovs{\tau }{\times }\yy) + \sum _{j=1}^{l-1} \sum _{\theta \in \st{n,l-1}} (-1)^{n+j}\sgn(\theta ) (\xx\ovs{\theta }{\times }\partial _j\yy)
    \end{alignat*}
    so that $\square  (\partial (\xx\otimes \yy)) = \partial  (\xx \square  \yy)$ and we have a chain map!

    \begin{proposition}[Künneth theorem ({\cite[Theorem 21]{HW}})]\label{proposition:Kunneth}
        The cross product induces a morphism
        \begin{alignat*}{10}
            \MH_*(X)\otimes \MH_*(Y) &\ \ovs{ \square }{\rightarrow }\ \MH_*(X\times Y)\\
                [f]\otimes [g]         &\mapsto  [f \square  g]
        \end{alignat*}
        which fits into a natural short exact sequence:
        \[
            0 \rightarrow  \MH_*(X)\otimes \MH_*(Y)\ \ovs{ \square }{\rightarrow }\ \MH_*(X\times Y) \rightarrow  \mathrm{Tor}(\MH_{*-1}(X),\MH_*(Y)) \rightarrow  0.
        \]
    \end{proposition}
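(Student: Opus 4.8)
The plan is to combine the purely algebraic Künneth theorem with the statement that the cross product $\square$ is a quasi-isomorphism.

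Since $\MC_k(Z)$ is by construction the free abelian group on $\Paths_k(Z)$, the complexes $\MC_*(X)$ and $\MC_*(Y)$ consist of free $\ZZ$-modules, so the algebraic Künneth theorem applies: for complexes $C,D$ of free abelian groups there is a natural short exact sequence
\[
    0 \longrightarrow H_*(C)\otimes H_*(D) \overset{\mu}{\longrightarrow} H_*(C\otimes D) \longrightarrow \mathrm{Tor}\bigl(H_{*-1}(C),H_*(D)\bigr) \longrightarrow 0,
\]
which splits (non-naturally) and whose first map sends $[c]\otimes[d]$ to $[c\otimes d]$. Taking $C=\MC_*(X)$, $D=\MC_*(Y)$ yields the asserted sequence save that the middle term is $H_*\bigl(\MC_*(X)\otimes\MC_*(Y)\bigr)$ instead of $\MH_*(X\times Y)$; replacing the former by the latter is precisely the job of the chain map $\square$.

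So the real content is that \emph{$\square$ is a quasi-isomorphism}. We have just verified it is a chain map. It is also injective: for two distinct basis tensors the interleavings $\xx\ovs{\sigma}{\times}\yy$ occurring in $\square(\xx\otimes\yy)$ have disjoint supports in $\Paths_k(X\times Y)$, because none of them ever moves in both coordinates at once, so that from the horizontal/vertical pattern of its steps one recovers $\xx$, $\yy$ and $\sigma$. Hence $\square$ identifies $\MC_*(X)\otimes\MC_*(Y)$ with the subcomplex $\im\square\subseteq\MC_*(X\times Y)$, and it remains to prove that the quotient complex $\MC_*(X\times Y)/\im\square$ is acyclic. This is an Eilenberg--Zilber-type assertion and forms the core of the proof in~\cite{HW}: one produces an explicit contracting homotopy, resolving ``diagonal'' steps (those moving in both factors) into a horizontal step followed by a vertical one and cancelling the remaining non-shuffle combinations. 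The sole geometric input is the interval identity $[(x,y),(x',y')]=[x,x']\times[y,y']$, which is exactly why the $l^1$ metric must appear; as nothing else enters the betweenness bookkeeping of~\cite{HW}, that argument carries over from graphs to metric spaces word for word. Moreover $\square$ preserves length — interleaving a length-$a$ with a length-$b$ \ppath{} produces a \ppath{} of length $a+b$ for the $l^1$ metric — so everything refines to the length-graded statement.

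Transporting the algebraic Künneth sequence along the resulting isomorphism $\square_*\colon H_*\bigl(\MC_*(X)\otimes\MC_*(Y)\bigr)\overset{\sim}{\longrightarrow}\MH_*(X\times Y)$ produces the claimed short exact sequence; its first map is $\square_*\circ\mu$, which on classes is $[f]\otimes[g]\mapsto[f\square g]$, and naturality in $X$ and $Y$ descends from naturality of the algebraic sequence and of $\square$ (assembled functorially from the grid $[n]\times[l]$ and from $\MC_*(f)$, $\MC_*(g)$), as does the splitting. I expect the quasi-isomorphism step to be the main obstacle: building the contracting homotopy on $\MC_*(X\times Y)/\im\square$ and checking it is compatible with the staircase sign $(-1)^{\#\{\text{squares below the staircase}\}}$. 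The remaining steps are formal homological algebra, subject only to the mild care that, without any finiteness hypothesis, all tensor products and sums are the ordinary algebraic ones and that the true statement is the length-graded refinement.
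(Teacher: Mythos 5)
Your overall architecture matches the paper's: apply the algebraic K\"unneth theorem to the free complexes $\MC_*(X)$, $\MC_*(Y)$, then replace the middle term $H_*(\MC_*(X)\otimes\MC_*(Y))$ by $\MH_*(X\times Y)$ using the fact that $\square$ is a quasi-isomorphism. Your identification of the interval identity $[(x,y),(x',y')]=[x,x']\times[y,y']$ as the sole geometric input (hence the appearance of the $l^1$ metric) is exactly the paper's point, and your injectivity observation about the disjoint supports of interleavings is correct, though it is not actually needed.

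The gap is in how the quasi-isomorphism is established. You propose to show that the cokernel $\MC_*(X\times Y)/\im\square$ is acyclic via an explicit contracting homotopy, and you assert that this ``forms the core of the proof in~\cite{HW}'' and ``carries over word for word.'' That is not how the proof goes, in either~\cite{HW} or here: no contracting homotopy on that cokernel is ever constructed. Instead one identifies $\MC_*(Z)$ with the normalised reduced chains $\NR_*(\MS(Z))$ of a pointed simplicial set, proves that $\MS(X)\wedge\MS(Y)\to\MS(X\times Y)$ is an \emph{isomorphism} of pointed simplicial sets (this is where the interval identity enters), invokes the reduced Eilenberg--Zilber theorem --- the shuffle map $\REZ:\NR_*(S)\otimes\NR_*(T)\to\NR_*(S\wedge T)$ is a quasi-isomorphism for abstract (acyclic-models) reasons --- and finally checks that the composite of $\REZ$ with these isomorphisms is literally the cross product. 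So the step you flag as the main obstacle is not done by hand but outsourced to a standard simplicial fact, and the appeal to a word-for-word transfer cannot substitute for an argument that does not exist in that form in the source: as written, the acyclicity of the cokernel --- which is the entire content of the theorem --- remains unproved in your proposal. Your reduction is otherwise sound, and a direct homotopy on the cokernel would give an alternative, more elementary proof; but that construction is exactly what is missing.
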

    
    \begin{definition}[{\cite[Definition 40]{HW}}]\label{definition:metric_magnitude_sset}
        If $X$ is a metric space, we define the pointed simplicial set $\MS(X)$ as having $k$-simplices the $(k+1)$-tuples of points $\langle x_0,\dots ,x_{k}\rangle :[k]\rightarrow X$ in $X$, plus basepoint simplices $\pt_n$, along with face and degeneracy maps defined by:
        \begin{alignat*}{10}
            \ddd_{k,i} \langle x_0,\dots ,x_i,\dots ,x_k\rangle  &:= \begin{cases}
                \langle x_0,\dots ,\widehat{x_i},\dots ,x_k\rangle  &\text{if $x_i \in  [x_{i-1},x_{i+1}]$ }\\
                \pt_{k-1}                 &\text{otherwise,}\\
            \end{cases}
            \intertext{and}
                \sss_{k,i} \langle x_0,\dots ,x_i,\dots ,x_k\rangle  &:= \langle x_0,\dots ,x_i,x_i,\dots ,x_k\rangle ,\\
            \intertext{and on basepoints:}
                \ddd_{k,i} \pt_k &:= \pt_{k-1}\\
                \sss_{k,i} \pt_k &:= \pt_{k+1}.
        \end{alignat*}
    \end{definition}

    \begin{proposition}[{\cite[Proposition 41]{HW}}]
        Let $X,Y$ metric spaces.
        The following morphism of pointed simplicial sets:
        \begin{alignat*}{10}
             \square : \MS(X) \wedge  \MS(Y) &\rightarrow   \MS(X\times Y)\\
             [\xx,\yy] &\mapsto  \langle \xx,\yy\rangle 
        \end{alignat*}
        is an isomorphism. 
    \end{proposition}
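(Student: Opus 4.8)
The plan is to show that $\square$ is a bijection in each simplicial degree and that it commutes with all the structure maps; since a degreewise bijective morphism of simplicial sets is automatically an isomorphism (its inverse is simplicial for free), this suffices and simultaneously confirms that $\square$ is a well-defined morphism of pointed simplicial sets in the first place.

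First I would pin down the $k$-simplices on both sides. A non-basepoint $k$-simplex of $\MS(X)\wedge \MS(Y)$ is a class $[\xx,\yy]$ with $\xx=\langle x_0,\dots,x_k\rangle\in X^{k+1}$ and $\yy=\langle y_0,\dots,y_k\rangle\in Y^{k+1}$, neither entry being the basepoint $\pt_k$ (pairs with a basepoint entry are collapsed to the new basepoint), while a non-basepoint $k$-simplex of $\MS(X\times Y)$ is a tuple $\langle(x_0,y_0),\dots,(x_k,y_k)\rangle\in(X\times Y)^{k+1}$. Under the canonical identification $(X\times Y)^{k+1}\cong X^{k+1}\times Y^{k+1}$ this is exactly the assignment $[\xx,\yy]\mapsto\langle\xx,\yy\rangle$, together with basepoint $\mapsto$ basepoint, so $\square$ is bijective on $k$-simplices for every $k$.

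Next I would check compatibility with the structure maps. Degeneracies are immediate: $\sss_{k,i}$ duplicates the $i$-th entry on both sides and fixes basepoints, hence intertwines with the coordinatewise identification. For the faces the crucial input is the $l^1$ interval identity recalled above, $[(x,y),(x',y')]=[x,y]\times[x',y']$: since the two triangle inequalities $d(x_{i-1},x_{i+1})\le d(x_{i-1},x_i)+d(x_i,x_{i+1})$ and $d(y_{i-1},y_{i+1})\le d(y_{i-1},y_i)+d(y_i,y_{i+1})$ sum to the $l^1$ triangle inequality, the point $(x_i,y_i)$ lies in $[(x_{i-1},y_{i-1}),(x_{i+1},y_{i+1})]$ exactly when $x_i\in[x_{i-1},x_{i+1}]$ and $y_i\in[y_{i-1},y_{i+1}]$. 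Comparing $\ddd_{k,i}$ on the two sides: if both betweenness conditions hold, $\MS(X\times Y)$ deletes $(x_i,y_i)$ while the smash returns $[\langle\dots\widehat{x_i}\dots\rangle,\langle\dots\widehat{y_i}\dots\rangle]$, and these correspond under the levelwise bijection; if one fails, say $x_i\notin[x_{i-1},x_{i+1}]$, then $\ddd_{k,i}\xx=\pt_{k-1}$, so $[\ddd_{k,i}\xx,\ddd_{k,i}\yy]$ is the basepoint of the smash, and $\ddd_{k,i}$ on $\MS(X\times Y)$ also outputs $\pt_{k-1}$ since $(x_i,y_i)$ is then not between its neighbours; basepoints go to basepoints on both sides. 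Hence $\ddd_{k,i}\circ\square=\square\circ\ddd_{k,i}$, and $\square$ is an isomorphism of pointed simplicial sets.

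I do not expect a serious obstacle: the entire content is the $l^1$ splitting of intervals, which forces the ``delete if between'' faces to act diagonally on the two factors, together with the observation that the wedge/basepoint collapse in $\MS(X)\wedge\MS(Y)$ is compatible with this, since failure of betweenness in either coordinate kills the simplex on both sides. The only points demanding a little care are the bookkeeping with basepoints in the smash product and --- if one uses the full simplicial structure, including whatever outer faces $\ddd_{k,0},\ddd_{k,k}$ the chosen convention for $\MS$ carries --- checking that those too act coordinatewise, which they do for the same reasons.
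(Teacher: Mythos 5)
Your proposal is correct and follows essentially the same route as the paper: bijectivity and compatibility with degeneracies are immediate, and the face maps are handled via the $l^{1}$ interval identity $[(x,y),(x',y')]=[x,y]\times[x',y']$ together with the observation that $\ddd_{k,i}\langle\xx,\yy\rangle$ avoids the basepoint iff both $\ddd_{k,i}\xx$ and $\ddd_{k,i}\yy$ do. You simply spell out the basepoint bookkeeping in the smash product more explicitly than the paper does.
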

    Let us clarify the notation: $\xx$ and $\yy$ are $n$-simplices of $\MS(X)$ and $\MS(Y)$ respectively, that is, maps $[n]\rightarrow X$ and $[n]\rightarrow Y$ respectively.
    Thus, the pair $(\xx,\yy)$ is an element of $\MS(X)\times \MS(Y)$, and $[\xx,\yy]$ an element of $\MS(X)\wedge \MS(Y)$.
    Finally, $\langle \xx,\yy\rangle :[n] \rightarrow  X\times Y$ is the “product” of the given maps, hence an element of $\MS(X\times Y)$.
    \begin{proof}
        Bijectivity and commutation with degeneracy maps is clear.
        For face maps, one uses the product identity for intervals in the $l^{1}$ product, plus the fact that $\ddd_{k,i}(\xx,\yy) \neq  \pt_{k-1}$ iff both $\ddd_{k,i}\xx\neq \pt_{k-1}$ and $\ddd_{k,i}\yy\neq \pt_{k-1}$ hold. 
    \end{proof}

    Still following~\cite{HW}, given a simplicial set $S$, the \emph{normalised reduced} chain complex $\NR_*(S)$ associated to $X$ is defined by:
    \[
        \NR_k(S) := \ZZ\{\text{$k$-simplices}\}/\ZZ\{\text{degenerate and basepoint simplices}\},
    \]
    with boundary map induced by:
    \[
        \partial _k = \sum _{i=0}^k (-1)^i \ddd_{k,i}.
    \]

    Since a simplex in $\MS(X)$ is degenerate iff it has consecutive equal points, the following holds:

    \begin{proposition}[{\cite[Lemma 42]{HW}}]
        $\NR_k(\MS(X))$ and $\MC_k(X)$ are isomorphic chain complexes.
    \end{proposition}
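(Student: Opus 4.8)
The plan is to exhibit an explicit isomorphism of chain complexes between $\NR_*(\MS(X))$ and $\MC_*(X)$ by matching generators and then checking compatibility with the two boundary maps. First I would observe that, by construction, $\NR_k(\MS(X))$ is the free abelian group on the $k$-simplices of $\MS(X)$ modulo the subgroup spanned by degenerate simplices and the basepoint simplices $\pt_k$. The non-basepoint $k$-simplices are exactly the maps $\langle x_0,\dots,x_k\rangle:[k]\to X$, and such a simplex is degenerate precisely when $x_i=x_{i+1}$ for some $i$ (since the degeneracies $\sss_{k,i}$ repeat a point, and every degenerate simplex is in the image of some $\sss_{k-1,i}$). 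Therefore a basis for $\NR_k(\MS(X))$ is given by the $k$-tuples with $x_i\neq x_{i+1}$ for all $i$ — but that is exactly the definition of a $k$-\ppath{}, and hence exactly a basis for $\MC_k(X)$. This gives a canonical isomorphism of graded abelian groups $\Phi_k:\NR_k(\MS(X))\to\MC_k(X)$ sending the class of $\langle x_0,\dots,x_k\rangle$ (when it is a \ppath{}) to itself.

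Next I would verify $\Phi$ is a chain map, i.e. $\Phi_{k-1}\circ\partial^{\NR}_k=\partial^{\MC}_k\circ\Phi_k$. Start from $\partial^{\NR}_k=\sum_{i=0}^k(-1)^i\ddd_{k,i}$ acting on the quotient. The $i=0$ and $i=k$ terms: $\ddd_{k,0}$ deletes $x_0$ and checks whether $x_0\in[x_{-1},x_1]$, which is vacuous/ill-defined at the endpoint; in the simplicial-set convention inherited from $\MS$ these boundary faces are handled so that after passing to the normalised \emph{reduced} complex the extreme faces do not contribute — more precisely, following \cite{HW}, $\ddd_{k,0}$ and $\ddd_{k,k}$ land in the basepoint/degenerate part or are discarded in the reduced normalisation, so they vanish in $\NR_{k-1}$. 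For $1\le i\le k-1$, $\ddd_{k,i}\langle x_0,\dots,x_k\rangle$ equals $\langle x_0,\dots,\widehat{x_i},\dots,x_k\rangle$ when $x_i\in[x_{i-1},x_{i+1}]$ and equals $\pt_{k-1}$ otherwise; the former maps under $\Phi_{k-1}$ to the $(k-1)$-\ppath{} obtained by deletion if that string is still a \ppath{} (which it is when $x_{i-1}\neq x_{i+1}$; and if $x_{i-1}=x_{i+1}$ the deleted tuple is degenerate, hence zero in $\NR_{k-1}$, while simultaneously the betweenness condition $d(x_{i-1},x_{i+1})=d(x_{i-1},x_i)+d(x_i,x_{i+1})$ forces $x_i=x_{i-1}=x_{i+1}$, contradicting that the original tuple is a \ppath{}, so this case does not arise), and the latter ($\pt_{k-1}$) maps to $0$. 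Comparing term by term with $\partial^{\MC}_k=\sum_{i=1}^{k-1}(-1)^i\partial_{k,i}$, where $\partial_{k,i}$ deletes $x_i$ exactly when $x_i$ is between its neighbours and returns $0$ otherwise, we see the two operators agree on each basis \ppath{}, after noting the sign conventions and the reduced-complex truncation of the $i=0,k$ faces coincide. Hence $\Phi$ intertwines the differentials.

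Finally I would conclude that $\Phi=\{\Phi_k\}$ is an isomorphism of chain complexes, since each $\Phi_k$ is a bijection on the chosen bases and $\Phi$ commutes with $\partial$; naturality in $X$ for $1$-Lipschitz maps, if desired, follows because on both sides a $1$-Lipschitz $f$ acts by applying $f$ to the tuple and killing it when the length is not preserved, and these descriptions match under $\Phi$.

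I expect the main obstacle to be bookkeeping rather than conceptual: carefully pinning down how the two \emph{extreme} face maps $\ddd_{k,0}$ and $\ddd_{k,k}$ of $\MS(X)$ behave — in particular confirming that in the normalised \emph{reduced} chain complex they contribute nothing, so that the sum $\sum_{i=0}^k$ on the simplicial side really does collapse to the sum $\sum_{i=1}^{k-1}$ in the magnitude complex — and simultaneously checking that the degeneracy identification (degenerate $\Leftrightarrow$ some $x_i=x_{i+1}$) is compatible with the $\pt$-valued convention in the definition of $\ddd_{k,i}$, so that no basis element is accidentally identified or lost. Once those conventions are aligned the verification of $\Phi\partial=\partial\Phi$ is a routine sign chase, which I would not spell out in full.
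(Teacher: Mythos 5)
Your proposal is correct and takes essentially the same route as the paper: identify the non-degenerate, non-basepoint $k$-simplices of $\MS(X)$ with the $k$-\ppath{}s, then check that the induced boundary agrees with $\partial^{\MC}$ because $\ddd_{k,i}$ hits the basepoint exactly when $x_i\notin[x_{i-1},x_{i+1}]$. You are in fact somewhat more careful than the paper about the extreme faces $\ddd_{k,0},\ddd_{k,k}$ and about the $x_{i-1}=x_{i+1}$ degeneracy (which, as you note, cannot occur for a \ppath{} when $x_i$ is between its neighbours), points the paper's proof passes over silently.
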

    \begin{proof}
        $\MC_k(X)$ is generated by the $k$-\ppath{}s in $X$; that is, the $(k+1)$-tuples of consecutively distinct points in $X$.
        $\NR_k(\MS(X))$ is generated by the non-degenerate non-basepoint simplices of $\MS(X)$ which are exactly the $k$-\ppath{}s.
        Thus, the groups are isomorphic.
        On $\NR_k(\MS(X))$, the boundary is defined as $\partial _k = \sum _{i=1}^{k-1}(-1)^i \ddd_{k,i}$, and since $\ddd_{k,i}$ sends a simplex $\xx$ to a basepoint iff $x_i \notin  [x_{i-1},x_{i+1}]$, $\ddd_{k,i}$ sends $\xx$ to zero at the level of chain maps, which shows that the boundary maps agree.
    \end{proof}
    From now on, we will identify $\NR_k(\MS(X))$ with $\MC_k(X)$.

    Remember that given a simplicial set $S$, there exists, for any $n$, a natural bijection:
    \[
        \facemorphism{\bullet }:\Mor(\Delta [n],S) \leftrightarrow  S_n,
    \]
    where $\Delta [n]$ is the canonical $n$-simplex.
    This bijection is obtained by sending a morphism $f:\Delta [n] \rightarrow  S$ to the image through $f$ of the single non-degenerate $n$-simplex $\Id:[n]\rightarrow [n]$ in $\Delta [n]$.

    If $\sigma =\langle \sigma _h,\sigma _v\rangle $ is a $(k,l)$-staircase, $\sigma $ defines a morphism of simplicial complexes:
    \[
        \sigma _*: \Delta [k+l] \rightarrow  \Delta [k]\times \Delta [l],
    \]
    by sending a face $f:[n] \rightarrow  [k+l]$ of $\Delta [k+l]$ to the pair of faces $(\sigma _h\circ f:[n] \rightarrow  [k],\sigma _v\circ f:[n]\rightarrow [l])$ in $\Delta [k ]\times \Delta [l]$.
    Finally, if $\xx$ and $\yy$ are simplices in $S$ and $T$ respectively, they are naturally associated to morphisms $\facemorphism{\xx}:\Delta [k]\rightarrow S,\facemorphism{\yy}: \Delta [l]\rightarrow T$, so that one has a morphism:
    \[
        \facemorphism{\xx}\times \facemorphism{\yy}:\Delta [k]\times \Delta [l] \rightarrow  S\times T.
    \]

    Given pointed simplicial sets $S,T$, we now define the \emph{reduced Eilenberg-Zilber map}.
    \begin{alignat*}{10}
        \REZ: \NR_*(S) \otimes  \NR_*(T)                   &\rightarrow       \NR_*(S\wedge T)\\
        \xx\otimes \yy  \in \NR_{k}(S)\otimes \NR_{l}(T)               &\mapsto       \sum _{\sigma \in \st{k,l}} [\morphismface{(\facemorphism{\xx}\times \facemorphism{\yy})\circ \sigma _*}]
    \end{alignat*}
    where $[\bullet ]:S\times T \rightarrow  S\wedge T$ is the collapsing map.

    The following abstract property of $\REZ$ is proved in~\cite{HW}:
    \begin{proposition}[{\cite[Proposition 43]{HW}}]
        $\REZ$ is a quasi-isomorphism.
    \end{proposition}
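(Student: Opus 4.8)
The plan is to recognise $\REZ$ as the \emph{reduced} Eilenberg--Zilber (shuffle) map and to deduce its being a quasi-isomorphism from the classical unpointed Eilenberg--Zilber theorem together with the splitting provided by the basepoint. The first step is the identification: for ordinary simplicial sets $S,T$ the normalised shuffle map $\nabla\colon N_*(S)\otimes N_*(T)\to N_*(S\times T)$ is a natural chain homotopy equivalence, with explicit homotopy inverse the Alexander--Whitney map $AW$ and natural homotopies realising $\nabla\circ AW\simeq\Id$. Unwinding definitions shows $\nabla(\xx\otimes\yy)=\sum_{\sigma\in\st{k,l}}\sgn(\sigma)\,\morphismface{(\facemorphism{\xx}\times\facemorphism{\yy})\circ\sigma_*}$, once one notes that a $(k,l)$-staircase is exactly a $(k,l)$-shuffle (the horizontal steps prescribing the degeneracies applied to $\xx$, the vertical steps those applied to $\yy$) and that $\sgn(\sigma)=(-1)^{\#\{\text{squares below }\sigma\}}$ is precisely the shuffle sign. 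In other words, $\REZ$ is $\nabla$ precomposed with the inclusion $\NR_*(S)\otimes\NR_*(T)\hookrightarrow N_*(S)\otimes N_*(T)$ and postcomposed with the collapse $N_*(S\times T)\twoheadrightarrow\NR_*(S\wedge T)$, which justifies the superscript in $\REZ=\nabla^{\ovl{N}}$.

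Next I would set up the relevant splittings. For a pointed simplicial set $S$, the retraction $S\to\pt$ splits $\pt\hookrightarrow S$, giving a natural decomposition $N_*(S)\cong\NR_*(S)\oplus\ZZ[0]$, with $\NR_*(S)$ the subcomplex annihilated by the idempotent coming from this retraction. Applying this to $S$, $T$ and $S\times T$, and using the standard identifications $N_*(S\vee T)\cong\NR_*(S)\oplus\NR_*(T)\oplus\ZZ[0]$ and $\NR_*(S\wedge T)\cong N_*(S\times T)/N_*(S\vee T)$, the Eilenberg--Zilber equivalence refines to a splitting
\[
N_*(S\times T)\ \simeq\ \bigl(\NR_*(S)\otimes\NR_*(T)\bigr)\ \oplus\ \NR_*(S)\ \oplus\ \NR_*(T)\ \oplus\ \ZZ[0]
\]
under which $\NR_*(S\wedge T)$ is identified with the single summand $\NR_*(S)\otimes\NR_*(T)$.

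To conclude: since the basepoint retractions are maps of pointed simplicial sets and $\nabla$, $AW$ and the connecting homotopies are all natural, each of them commutes with the induced idempotents on $N_*(S)$, $N_*(T)$ and $N_*(S\times T)$, hence respects the above direct sum decompositions summand by summand. Restricting $\nabla$ to the summand $\NR_*(S)\otimes\NR_*(T)$ of its source and projecting onto the summand $\NR_*(S\wedge T)$ of its target therefore yields a chain homotopy equivalence, and by the identification of the first paragraph this map is exactly $\REZ$; in particular $\REZ$ is a quasi-isomorphism.

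The step I expect to be the main obstacle is the bookkeeping tying these three paragraphs together: checking with full care that the combinatorial formula defining $\REZ$ really is the restriction–corestriction of the classical shuffle map, signs included (this is where the ``squares below the staircase'' count must be matched against the shuffle inversion count), and that the basepoint idempotents are strictly — not merely up to homotopy — compatible with $\nabla$, $AW$ and the homotopies, so that passing to the reduced pieces is legitimate on the nose. Everything genuinely homotopical is packaged inside the classical Eilenberg--Zilber theorem; alternatively one could bypass that theorem and argue directly by writing down a reduced Alexander--Whitney map together with the Eilenberg--Mac Lane homotopies, but that is exactly the kind of routine-but-lengthy verification I would rather not reproduce.
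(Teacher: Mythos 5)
The paper gives no proof of this statement at all — it is quoted verbatim from \cite[Proposition 43]{HW} — and your argument is the standard deduction of the reduced Eilenberg--Zilber theorem from the classical unreduced one via naturality with respect to the basepoint idempotents, which is essentially how the cited source disposes of it. Your bookkeeping is sound: staircases are exactly shuffles with the ``squares below'' count equal to the inversion count, and since the commuting idempotents $N(e_S\times\Id)$ and $N(\Id\times e_T)$ split $N_*(S\times T)$ as $N_*(S\vee T)$ plus a complementary summand canonically isomorphic to $\NR_*(S\wedge T)$ (no simplicial retraction onto $S\vee T$ is needed, only the chain-level splitting), restricting the natural homotopy equivalence to the reduced summands is legitimate on the nose.
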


    Let us now concretely describe the map $\REZ$ in the case at hand:
    Fix generators $\xx \in  \NR_k(\MS(X))$ and $\yy \in  \NR_l(\MS(Y))$.
    Seen as a simplex of $\MS(X)$, we have
    \begin{alignat*}{10}
        \xx: [k] &\rightarrow  X\\
    \intertext{and through the identification “simplex”$\leftrightarrow $“morphism”, as }
        \facemorphism{\xx}: \Delta [k]                   &\rightarrow  \MS(X)\\
        (\phi :[n]\rightarrow [k]) \in  \Delta [k]_n    &\mapsto  (\xx\circ \phi :[n] \rightarrow  X) \in  \MS(X)_n,
    \end{alignat*}
    and similarly for $\yy$.
    Thus, the composite 
    \begin{alignat*}{10}
        (\facemorphism{\xx}\times \facemorphism{\yy})\circ \sigma _* : \Delta [k+l]  &\rightarrow  \MS(X)\times \MS(Y)\\
    \intertext{is defined as}
        (\phi :[n] \rightarrow  [k]) \in  \Delta [k+l]_n       &\mapsto  ((\xx_*\times \yy_*)\circ \sigma _*)\phi  \in  \MS(X)\times \MS(Y)\\
                                       &= (\xx\circ \sigma _h\circ \phi ,\yy\circ \sigma _v\circ \phi ),\\
    \intertext{and passing back from morphism to simplices (evaluating at $\Id:[k+l]\rightarrow [k+l]$), the result is simply}
        (\xx\circ \sigma _h\circ \Id,\yy\circ \sigma _v\circ \Id) = (\xx\circ \sigma _h,\yy\circ \sigma _v)&\in  \MS(X)_k\times \MS(Y)_l.
    \end{alignat*}

    \begin{proposition}[{\cite[Proof of Theorem 21]{HW}}]
        The cross product 
        \[
             \square : \MC_*(X)\otimes \MC_*(Y) \rightarrow  \MC_*(X\times Y)
        \]
        is a quasi-isomorphism.
    \end{proposition}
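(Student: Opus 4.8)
The plan is to identify the cross product with a composite of maps each of which is already known (from the results recalled above) to be a quasi-isomorphism or an isomorphism. Concretely, I would factor $\square$ as
\[
    \MC_*(X)\otimes\MC_*(Y) \;\xrightarrow{\ \cong\ }\; \NR_*(\MS(X))\otimes\NR_*(\MS(Y)) \;\xrightarrow{\ \REZ\ }\; \NR_*(\MS(X)\wedge\MS(Y)) \;\xrightarrow{\ \NR_*(\square)\ }\; \NR_*(\MS(X\times Y)) \;\xrightarrow{\ \cong\ }\; \MC_*(X\times Y),
\]
where the two outer arrows are the identifications $\NR_*(\MS(-))\cong\MC_*(-)$ of \cite[Lemma 42]{HW}, the middle left arrow is the reduced Eilenberg–Zilber map, and $\NR_*(\square)$ is obtained by applying the (functorial) normalised reduced chains to the isomorphism of pointed simplicial sets $\square\colon\MS(X)\wedge\MS(Y)\to\MS(X\times Y)$ of \cite[Proposition 41]{HW}.

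The substance of the proof is then to check that this composite is \emph{equal} to the chain-level cross product defined earlier. I would do this on generators: take $\xx\in\MC_k(X)$ and $\yy\in\MC_l(Y)$, regarded via the identification as elements of $\NR_k(\MS(X))$ and $\NR_l(\MS(Y))$. By the concrete unwinding of $\REZ$ carried out just above, $\REZ(\xx\otimes\yy)$ is the signed sum over $\sigma\in\st{k,l}$ of $[\morphismface{(\facemorphism{\xx}\times\facemorphism{\yy})\circ\sigma_*}]$, which evaluates to $[(\xx\circ\sigma_h,\yy\circ\sigma_v)]$ in $\NR_{k+l}(\MS(X)\wedge\MS(Y))$; applying $\NR_*(\square)$ and using $[\xx,\yy]\mapsto\langle\xx,\yy\rangle=(\xx\times\yy)\circ\sigma=\xx\ovs{\sigma}{\times}\yy$ gives exactly $\sum_{\sigma}\sgn(\sigma)\,(\xx\ovs{\sigma}{\times}\yy)=\xx\square\yy$. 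Here it is worth observing explicitly that, since consecutive values of a staircase $\sigma$ differ in exactly one coordinate and $\xx,\yy$ are themselves \ppath{}s, every interleaving $\xx\ovs{\sigma}{\times}\yy$ is automatically a genuine \ppath{}; hence no term is annihilated on passing to the normalised reduced quotient, and the two formulas agree literally rather than merely after quotienting. Once equality is established, the conclusion is immediate: $\REZ$ is a quasi-isomorphism by \cite[Proposition 43]{HW}, $\NR_*(\square)$ is an isomorphism of chain complexes because $\square$ is an isomorphism of pointed simplicial sets, and the two identifications are isomorphisms, so the composite — that is, $\square\colon\MC_*(X)\otimes\MC_*(Y)\to\MC_*(X\times Y)$ — is a quasi-isomorphism. (This is what \Cref{proposition:Kunneth} then extracts via the algebraic Künneth theorem, using that $\MC_*(X)$ is a complex of free abelian groups.)

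The step I expect to be the main obstacle is the sign bookkeeping in the verification that the square commutes on the nose: one must confirm that the signs $\sgn(\sigma)=(-1)^s$ attached to staircase paths in the definition of $\square$ coincide with those produced by the Eilenberg–Zilber/shuffle map through the chain of identifications, so that the composite reproduces $\square$ exactly rather than up to a sign convention or a chain homotopy. This is essentially the same bookkeeping that appears in the direct verification that $\square$ is a chain map (carried out above), organised around the classification of indices into corners, flats and walls; everything else in the argument is formal.
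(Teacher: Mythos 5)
Your proof is correct and follows essentially the same route as the paper: factor $\square$ through the identifications $\MC_*(-)\cong\NR_*(\MS(-))$, the reduced Eilenberg--Zilber map, and $\NR_*$ applied to the simplicial isomorphism, then verify on generators that the composite equals the cross product. Your additional observation that every interleaving of \ppath{}s is again a \ppath{}, so nothing is killed in the normalised reduced quotient, is a nice explicit touch but does not change the argument.
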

    \begin{proof}
        We consider the following chain of maps:
        \begin{alignat*}{10}
            \MC_*(X)\otimes \MC_*(Y)\ &\ovs{\phantom{\mathclap{\NR_*(\square )}}}{\cong } \quad                      \NR_*(\MS(X)) \otimes  \NR_*(\MS(Y)) \\
                               &\ovs{\mathclap{\REZ}}{\rightarrow } \quad        \NR_*(\MS(X) \wedge  \MS(Y)) \\
                               &\ovs{\mathclap{\NR_*(\square )}}{\cong } \quad    \NR_*(\MS(X\times Y)) \\
                               &\ovs{\phantom{\mathclap{\NR_*(\square )}}}{\cong } \quad                    \MC_*(X\times Y).
        \end{alignat*}
        All but $\REZ$ are isomorphisms, and $\REZ$ is a quasi-isomorphism; the composite is therefore a quasi-isomorphism.
        By following along a generator $\xx\otimes \yy \in  \MC_k(X)\otimes \MC_l(Y)$, one has
        \begin{alignat*}{10}
            \xx\otimes \yy \ &\ovs{\mathclap{=}}{\mapsto }& \quad              &\xx\otimes \yy\\
                      &\ovs{\mathclap{\REZ}}{\mapsto }& \quad           &\sum _{\sigma \in \st{k,l}} \sgn(\sigma )[(\xx\circ \sigma _h,\yy\circ \sigma _v)]\\
                      &\ovs{\mathclap{\NR_{k+l}(\square )}}{\mapsto }& \quad   &\sum _{\sigma \in \st{k,l}} \sgn(\sigma )[(\xx\times \yy)\circ \sigma ]\\
                      &\ovs{\mathclap{=}}{\mapsto }&  \quad             &\sum _{\sigma \in \st{k,l}} \sgn(\sigma )[(\xx\times \yy)\circ \sigma ] = \xx\square \yy.
        \end{alignat*}
        and the composite is really just the cross product.
    \end{proof}
   
    \begin{proof}[Proof of~\Cref{proposition:Kunneth}]
        Applying the algebraic Künneth formula to $\MC_*(X)$ and $\MC_*(Y)$ yields a short exact sequence
        \[
            0 \rightarrow  \MH_{*}(X) \otimes  \MH_{*}(Y) \rightarrow  H_*(\MC_*(X)\otimes \MC_*(Y)) \rightarrow  \mathrm{Tor}(\MH_{*-1}(X),\MH_{*}(Y)) \rightarrow  0,
        \]
        and by the above, the middle term is isomorphic, through $H_*(\square )$, to $\MH_k(X\times Y)$.
        Naturality follows from naturality in the algebraic Künneth formula and that of the cross product.
    \end{proof}
    
    Note that the “length aware” sequence in~\cite{HW} can easily be recovered by fixing $l$ in $H_*(\MC_*(X)\otimes \MC_*(Y))$.

    \subsection{Excision}

    \begin{definition}[Gated sets]
        Given a metric space $X$, a subset $A$ of $X$ is said to be \emph{gated} if for any $x\in X$, there exists some $a_x\in A$ such that $a_x$ is between $x$ and any $a\in A$.
        The point $a_x$ is called a \emph{gate} between $x$ and $A$.
    \end{definition}
    
    Gated sets enjoy the following properties (see~\cite{DS}).
    \begin{proposition}[{\cite[pp. 114,112, 115 respectively]{DS}}]
    \begin{itemize}
        \item
            Gated sets are convex.
        \item
            For any $x$ and gated $A$, there exists a unique gate $a_x$.
        \item
            The map $x\mapsto a_x$ is non-expanding, and is the identity on $A$.
    \end{itemize}
    \end{proposition}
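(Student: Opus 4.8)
The plan is to derive all three statements directly from the defining inequality of a gate, by elementary triangle-inequality bookkeeping, in the order: uniqueness of the gate, then that the gate map restricts to the identity on $A$, then convexity, then the non-expanding property. (Since the paper only cites~\cite{DS} here, this self-contained argument is optional, but it is short.)

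First I would prove uniqueness. Given $x\in X$ and two gates $a,a'\in A$, apply the gate property of $a$ to the point $a'\in A$ to get $d(x,a)+d(a,a')=d(x,a')$, and symmetrically $d(x,a')+d(a',a)=d(x,a)$. Adding these and cancelling $d(x,a)+d(x,a')$ leaves $2d(a,a')=0$, so $a=a'$ by non-degeneracy of the metric; this licenses writing $a_x$ for \emph{the} gate and speaking of the map $x\mapsto a_x$. That $a_x=x$ whenever $x\in A$ is then immediate, since $x$ itself satisfies the gate inequality (with $0+d(x,a)=d(x,a)$), and uniqueness forces $a_x=x$.

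For convexity, take $a,b\in A$ and $z\in[a,b]$, so $d(a,z)+d(z,b)=d(a,b)$. Apply the gate property of $a_z$ (the gate for $z$) to $a$ and to $b$ separately and add:
\[
    2d(z,a_z)+d(a_z,a)+d(a_z,b)=d(z,a)+d(z,b)=d(a,b)\le d(a,a_z)+d(a_z,b),
\]
the last step by the triangle inequality. Hence $d(z,a_z)\le 0$, so $z=a_z\in A$. For the non-expanding property, given $x,y\in X$ apply the gate property of $a_x$ to $a_y$ and of $a_y$ to $a_x$:
\[
    d(x,a_x)+d(a_x,a_y)=d(x,a_y),
    \qquad
    d(y,a_y)+d(a_y,a_x)=d(y,a_x).
\]
Adding and rearranging gives $2d(a_x,a_y)=\bigl(d(x,a_y)-d(y,a_y)\bigr)+\bigl(d(y,a_x)-d(x,a_x)\bigr)$, and bounding $d(x,a_y)\le d(x,y)+d(y,a_y)$ and $d(y,a_x)\le d(x,y)+d(x,a_x)$ yields $2d(a_x,a_y)\le 2d(x,y)$, as required.

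The argument is entirely formal, so there is no real obstacle; the only thing needing care is the order of steps — uniqueness must come first so that ``the gate map'' is well defined — and the recurring small trick, used in each part, of summing two instances of the gate equality so that the unwanted mixed distances collapse under a single application of the triangle inequality.
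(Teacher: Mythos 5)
Your argument is correct in all four steps: uniqueness, the identity on $A$, convexity, and the non-expanding property all follow exactly as you write them, and the ordering (uniqueness first, so that $x\mapsto a_x$ is well defined) is handled properly. The paper itself offers no proof here — it simply cites Dress and Scharlau~\cite{DS} — so there is nothing to compare against beyond noting that your self-contained derivation from the gate identity $d(x,a_x)+d(a_x,a)=d(x,a)$ (summing two instances and collapsing the mixed terms with one triangle inequality) is precisely the standard argument one finds in that reference.
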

    From now on, we write $\pi :X\rightarrow A$ for the map sending $x$ to $a_x$.
    Note that by the above, $\pi $ is a retraction from $X$ to $A$.

    \paragraph{Summary of differences.}
    As in the proof of the Künneth formula, the proofs of excision and Mayer-Vietoris in~\cite{HW} essentially generalise without trouble, yet some care is needed.
    This is mainly because the notions of convexity and subgraphs used in~\cite{HW} do not exactly match ours.
    For instance, it is possible for a gated decomposition $X=Y\cup Z$ to not be valid in the sense of~\cite{HW} because of the existence of edges connecting $Y$ and $Z$.
    Conversely, a subgraph may be convex in the sense of~\cite{HW} but not in the sense used here.
    Since the notions used in~\cite{HW} are less natural in the metric case, we chose not to expand on them.

    Apart from definitional differences, the main obstacle to generalising \cite{HW} comes in their \cite[Lemma 51]{HW} and \cite[Proof of Theorem 29]{HW} in which, once a length $l$ is fixed, they use the vanishing of the groups $\MC_k^l(X)$ for $k>l$; this does not hold in general for metric spaces.
    Thus, the vanishing of the homologies of the respective quotients in their arguments can no longer be proven for all $k$ at once.

    Finally, unlike~\cite{HW}, we have not not verified naturality of the Mayer-Vietoris sequence in the metric case.

    To conclude, we will (again) follow~\cite[Section 9]{HW} very closely and make changes as needed.
    
    \begin{definition}[Gated decomposition]
        If $X$ is a metric space and $Y,Z,W$ are subspaces satisfying $X=Y\cup Z,\ W=Y\cap Z$ and $W$ is gated w.r.t. $Z$, then we say that the triple $(X,Y,Z)$ is a \emph{gated} decomposition. 
    \end{definition}

    Following~\cite{HW}, we write $\MC_*(Y,Z)$ for the subcomplex of $\MC_*(X)$ spanned by \ppath{}s entirely contained in either $Y$ or $Z$.
    We can now state the Excision theorem:
    \begin{theorem}[Excision – metric setting]\label{theorem:metric_excision}
        If $X=Y\cup Z, W=Y\cap Z$ is a gated decomposition of $X$, then the inclusion
        \[
            \MC_*(Y,Z) \hookrightarrow  \MC_*(X)
        \]
        is a quasi-isomorphism.
    \end{theorem}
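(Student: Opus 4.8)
The plan is to follow~\cite[Section~9]{HW} and show that the quotient chain complex $Q_* := \MC_*(X)/\MC_*(Y,Z)$ is contractible; the long exact homology sequence of $0 \to \MC_*(Y,Z) \to \MC_*(X) \to Q_* \to 0$ then yields at once that the inclusion is a quasi-isomorphism. I would begin by noting that $X = Y \sqcup (Z\setminus W)$ (because $W \subseteq Y$ and $Y\cap Z = W$), so that $Q_k$ is the free abelian group on the \emph{straddling} $k$-\ppath{}s — those meeting both $Y\setminus W$ and $Z\setminus W$ — and that in $Q_*$ the boundary of a straddling \ppath{} is computed by simply discarding the faces that land in $\MC_*(Y,Z)$, i.e.\ that are non-straddling.

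Before constructing the homotopy I would record the properties of the gate retraction $\pi : Z \to W$ that the argument needs: it is $1$-Lipschitz, idempotent, restricts to $\Id_W$, and, crucially, for every $z\in Z$ and $w \in W$ the gate obeys $d(z,w) = d(z,\pi z) + d(\pi z, w)$, i.e.\ $\pi z$ lies between $z$ and every point of $W$. This betweenness property is the metric substitute for the combinatorial input that Hepworth--Willerton extract from their ``projecting'' hypothesis, and it is exactly what is needed to evaluate the face maps $\partial_{k,i}$ that will appear.

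Then, mimicking~\cite[Lemma~51 and proof of Theorem~29]{HW}, I would write down an explicit contracting homotopy $h : Q_* \to Q_{*+1}$ on straddling \ppath{}s: heuristically, $h(\xx)$ locates a canonical transition of $\xx$ between its $Y$-part and its $(Z\setminus W)$-part and inserts there the gate of the relevant vertex, with $h(\xx) = 0$ in the degenerate cases where this would repeat a vertex or where the inserted gate would fail the required betweenness. The gate property guarantees that the face of $h(\xx)$ deleting the inserted vertex returns $\xx$, while the remaining faces of $h(\xx)$ telescope against the terms of $h(\partial\xx)$; assembling these, the identity $\partial h + h\partial = \pm\,\Id_{Q_*}$ should emerge from a case analysis organised by whether the pivotal intermediate vertex is a gate, a point of $W$, of $Y\setminus W$, or of $Z\setminus W$, always under the convention that terms landing in $\MC_*(Y,Z)$ are dropped. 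The bookkeeping of signs and degenerate cases is routine but delicate, and pinning down the precise formula for $h$ so that everything cancels is where most of the labour lies.

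The step I expect to be the genuine obstacle is the one already flagged: Hepworth--Willerton fix a length $l$, pass to $\MC^l_*$, and exploit that $\MC^l_k(X) = 0$ for $k > l$, so that the relevant quotient is a bounded complex whose acyclicity can be established degree by degree from the top; in the metric setting $\MC^l_k(X)$ need not vanish for large $k$, so one cannot induct down from $l$, and the contracting homotopy must instead be verified to work in \emph{all} homological degrees simultaneously, i.e.\ the formula and its check must be genuinely uniform in $k$. Granting \Cref{theorem:metric_excision}, the Mayer--Vietoris sequence is then obtained in the usual way, by splicing the resulting excision isomorphism into the long exact sequence associated with $0 \to \MC_*(W) \to \MC_*(Y)\oplus\MC_*(Z) \to \MC_*(Y,Z) \to 0$.
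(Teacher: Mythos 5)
Your reduction to showing that $Q_*=\MC_*(X)/\MC_*(Y,Z)$ is acyclic, and your description of its generators (the straddling paths) and of its boundary map, are correct and match the starting point of the paper. But the heart of your argument --- a single global contracting homotopy $h$ on all of $Q_*$ that ``locates a canonical transition and inserts the gate there'' --- is left as a heuristic, and no such one-step formula works. A general straddling path does not consist of a $Y$-part followed by a $(Z\setminus W)$-part: it can alternate between $Y\setminus W$ and $Z\setminus W$ arbitrarily many times, so there is no canonical transition; moreover the face maps of $\partial\xx$ can merge or destroy transitions, so whichever transition you declare canonical (say the last one) changes between $\xx$ and its faces and the telescoping in $\partial h+h\partial$ breaks. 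Concretely, for $\xx=\langle y_0,z_1,y_2,z_3\rangle$ with $y_i\in Y\setminus W$, $z_i\in Z\setminus W$, the term of $h(\partial\xx)$ coming from deleting $y_2$ inserts $\pi(z_1)$ after $y_0$, while the corresponding face of $h(\xx)$ carries the inserted vertex $\pi(z_3)$; these do not cancel. A second obstruction is that the gate property only asserts $\pi(z)\in[z,w]$ for $w\in W$, so when the neighbour of the insertion point lies outside $W$ the face of $h(\xx)$ that is supposed to return $\xx$ may simply vanish. This is exactly why the paper (following Hepworth--Willerton) does \emph{not} contract $Q_*$ in one step: it filters $\MC_*(X)$ by the subcomplexes $G_*(l)$ (controlling how long a prefix of the path stays inside $Y$ or inside $Z$), identifies each graded piece $G_*(l+1)/G_*(l)$ with a direct sum of shifted complexes $B_*(b)/\tilde B_*(b)$, filters those in turn by the $F_*(b,l)$, and only at the innermost level --- the complexes $A_*(a,b)$ of paths whose \emph{interior} vertices all lie in $W$ --- writes down the contracting homotopy inserting $\pi(b)$; there the gate property applies to every interior vertex and the verification closes.

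You have also misdiagnosed where the metric setting actually bites. The failure of $\MC^l_k(X)=0$ for $k>l$ is not repaired by making a homotopy ``uniform in $k$''; it is repaired by replacing Hepworth--Willerton's top-down induction on the bounded complex $\MC^l_*$ with direct-limit arguments: $\MC_*(X)=\varinjlim_l G_*(l)$ and $B_*(b)=\varinjlim_l F_*(b,l)$, each inclusion in these systems is a quasi-isomorphism by the graded-piece computations, and homology commutes with direct limits. Your final paragraph on deducing Mayer--Vietoris from excision is fine (the paper additionally checks injectivity of $\langle (j_Y)_*,-(j_Z)_*\rangle$ using that $\pi$ is a retraction), but as written the proof of the excision statement itself has a genuine gap.
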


    Once one has excision, Mayer-Vietoris follows easily:

    \begin{theorem}[Mayer-Vietoris – metric case]
        If $X=Y\cup Z, W=Y\cap Z$ is a gated decomposition of $X$, then the inclusions 
        \[
            j_Y:W\rightarrow Y,\ j_Z:W\rightarrow Z,\ i_Y:Y\rightarrow X,\ i_Z:Z\rightarrow X
        \] 
        induce a short exact sequence:
        \[
            0 \rightarrow  \MH_*(W) \ovs{\langle (j_Y)_*, -(j_Z)_*\rangle}{\longrightarrow } \MH_*(Y)\oplus \MH_*(Z) \ovs{(i_Y)_*\oplus (i_Z)_*}{\longrightarrow } \MH_*(X) \rightarrow  0.
        \]
    \end{theorem}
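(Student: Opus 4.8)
The plan is to deduce Mayer--Vietoris from Excision (\Cref{theorem:metric_excision}) by standard homological algebra; the only ingredient specific to the metric world is the gate retraction $\pi\colon Z\to W$ furnished by gatedness of $W$ in $Z$.

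First I would set up the Mayer--Vietoris short exact sequence of chain complexes. For a subspace $A\subseteq X$ the inclusion is an isometric embedding, so the induced chain map $\MC_*(A)\to\MC_*(X)$ never kills a generator and is injective; since betweenness is an intrinsic ternary relation on points, this identifies $\MC_*(A)$ with the subcomplex of $\MC_*(X)$ spanned by the \ppath{}s lying in $A$. Hence, inside $\MC_*(X)$, we have $\MC_*(Y)\cap\MC_*(Z)=\MC_*(W)$ and, by the very definition of $\MC_*(Y,Z)$, $\MC_*(Y)+\MC_*(Z)=\MC_*(Y,Z)$. The inclusion--exclusion square then gives a short exact sequence of chain complexes
\[
0\longrightarrow\MC_*(W)\xrightarrow{\;\langle(j_Y)_*,-(j_Z)_*\rangle\;}\MC_*(Y)\oplus\MC_*(Z)\xrightarrow{\;(i_Y)_*\oplus(i_Z)_*\;}\MC_*(Y,Z)\longrightarrow 0,
\]
where at chain level the first map is $w\mapsto(w,-w)$ and the second is $(y,z)\mapsto y+z$: left-injectivity and right-surjectivity are immediate, and exactness in the middle holds because $y=-z$ in $\MC_*(X)$ forces $y\in\MC_*(Y)\cap\MC_*(Z)=\MC_*(W)$.

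Next I would take the associated long exact sequence in homology and invoke \Cref{theorem:metric_excision} to replace $H_*(\MC_*(Y,Z))$ by $\MH_*(X)$, obtaining
\[
\cdots\to\MH_k(W)\xrightarrow{\;\alpha_k\;}\MH_k(Y)\oplus\MH_k(Z)\xrightarrow{\;\beta_k\;}\MH_k(X)\xrightarrow{\;\delta_k\;}\MH_{k-1}(W)\to\cdots,
\]
with $\alpha_k=\langle(j_Y)_*,-(j_Z)_*\rangle$ and $\beta_k$ the excision isomorphism precomposed with $(y,z)\mapsto i_Yy+i_Zz$, i.e.\ $(i_Y)_*\oplus(i_Z)_*$. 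The crux is that $\pi\circ j_Z=\Id_W$, so by functoriality of $\MH_*$ we get $\MH_*(\pi)\circ\MH_*(j_Z)=\Id$; hence $\MH_*(j_Z)$ is injective, and therefore so is $\alpha_k$ for every $k$. By exactness $\im\delta_{k+1}=\ker\alpha_k=0$, so all connecting maps vanish and the long exact sequence breaks into the short exact sequences
\[
0\to\MH_k(W)\xrightarrow{\;\langle(j_Y)_*,-(j_Z)_*\rangle\;}\MH_k(Y)\oplus\MH_k(Z)\xrightarrow{\;(i_Y)_*\oplus(i_Z)_*\;}\MH_k(X)\to 0,
\]
which is the claim (and the same argument run on the $l$-graded pieces gives the length-aware version).

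I do not expect a genuine obstacle once Excision is available; the argument is essentially formal. The two points deserving a little care are: (i) verifying that the subcomplex identifications really are literal inclusions for our (stronger) notion of convex/gated subspace, so that the Mayer--Vietoris sequence of complexes is honestly short exact; and (ii) matching the maps produced by the snake lemma with the stated maps $\langle(j_Y)_*,-(j_Z)_*\rangle$ and $(i_Y)_*\oplus(i_Z)_*$, signs included. As flagged in the summary of differences, I would not attempt to verify naturality of the resulting sequence.
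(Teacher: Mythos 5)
Your proposal is correct and follows essentially the same route as the paper: the chain-level inclusion–exclusion short exact sequence, the long exact sequence combined with Excision, and the gate retraction $\pi\colon Z\to W$ to show $\langle(j_Y)_*,-(j_Z)_*\rangle$ is injective so that the connecting maps vanish. The only cosmetic difference is that you deduce injectivity from $\MH_*(\pi)\circ\MH_*(j_Z)=\Id$ directly, whereas the paper phrases it via the composite $\pi_*\circ\prj_2$ serving as a (signed) left inverse; these are the same argument.
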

    The proof follows~\cite[Proof of Theorem 29, assuming Theorem 28]{HW}.
    \begin{proof}
        We have a short exact sequence of chain complexes
        \[
            0 \rightarrow  \MC_*(W) \ovs{\langle (j_Y)_*, -(j_Z)_*\rangle}{\longrightarrow } \MC_*(Y)\oplus \MC_*(Z) \ovs{(i_Y)_*\oplus (i_Z)_*}{\longrightarrow } \MC_*(Y,Z) \rightarrow  0,
        \]
        which induce a long exact sequence in homology:
        \[
            \dots  \rightarrow  \MH_*(W) \ovs{\langle (j_Y)_*, -(j_Z)_*\rangle}{\longrightarrow } \MH_*(Y)\oplus \MH_*(Z) \ovs{(i_Y)_*\oplus (i_Z)_*}{\longrightarrow } \MH_*(Y,Z) \rightarrow  \MH_{*-1}(W) \rightarrow  \dots 
        \]
        and we will verify that $\langle (j_Y)_*, -(j_Z)_*\rangle$ is a monomorphism, which implies that the sequence decomposes into fragments:
        \[
            0 \rightarrow  \MH_*(W) \ovs{\langle (j_Y)_*, -(j_Z)_*\rangle}{\longrightarrow } \MH_*(Y)\oplus \MH_*(Z) \ovs{(i_Y)_*\oplus (i_Z)_*}{\longrightarrow } \MH_*(Y,Z) \rightarrow  0, 
        \]
        and composing with the excision isomorphism yields the desired short exact sequence.

        Since $\pi :Z\rightarrow W$ is a retraction, $\pi _*:\MH_*(Z)\rightarrow \MH_*(W)$ is an epimorphism, and so is the composite
        \[
            \MH_*(Y)\oplus \MH_*(Z) \ovs{\prj_2}{\rightarrow } \MH_*(Z) \ovs{\pi _*}{\rightarrow } \MH_*(W) 
        \]
        and $\langle (j_Y)_*, -(j_Z)_*\rangle$ serves as a left inverse, hence is injective.

    \end{proof}

    \subsubsection{Proof of Excision}
    \newcommand{\tB}{\tilde B}

    Let us fix for the remainder of the section a gated decomposition $X=Y\cup Z,W=Y\cap Z$. 

    We define, for $a\in Y-Z,b\in Z-Y$ (or vice versa):
    \begin{alignat*}{10}
        A_*(a,b) &:= \spn \left\{ \langle x_0,\dots ,x_k\rangle  \ \middle|\ x_0=a,x_k=b, x_1,\dots ,x_{k-1}\in W \right\} \leq  \MC_*(X).\\
    \intertext{For $b\in Z-Y$:}
        B_*(b) &:= \spn \left\{ \langle x_0,\dots ,x_k\rangle  \ \middle|\ x_k=b, x_0,\dots ,x_{k-1}\in Y  \right\} \leq  \MC_*(X),\\
        \tB_*(b) &:= \spn \left\{ \langle x_0,\dots ,x_k\rangle  \ \middle|\ x_k=b, x_0,\dots ,x_{k-1}\in W  \right\} \leq  \MC_*(X),\\
    \intertext{and $i\in \NN$:}    
        F_*(b;i) &:= \spn \left\{ \langle x_0,\dots ,x_k\rangle  \ \middle|\ x_k=b, x_0,\dots ,x_{k-1}\in Y, x_i,\dots ,x_{k-1}\in Z \right\} \leq  \MC_*(X),\\
    \intertext{and symmetrically for $b\in Y-Z$. Finally for $i\in \NN$:}
        G_*(i) &:= \spn \left\{ \langle x_0,\dots ,x_k\rangle  \ \middle|\ x_0,\dots ,x_{k-i}\text{ all lie in $Y$, or all lie in $Z$} \right\} \leq  \MC_*(X).
    \end{alignat*}
    It is clear that $G_*(0) = \MC_*(Y,Z)$, $G_k(l) = \MC_k(X)$ for all $k\leq l$, and $G_*(l)\leq G_*(l+1)$ for all $l$.
    It follows that $\MC_*(X)$ is the direct limit of the system
    \[
        \MC_*(Y,Z) = G_*(0) \leq  G_*(1) \leq  \dots  \leq  G_*(l) \leq  G_*(l+1) \leq  \dots .
    \]
    
    Thus, to show that the inclusion $\MC_*(Y,Z) \hookrightarrow  \MC_*(X)$ is a quasi-isomorphism, it is enough to do so for each inclusion $G_*(l) \hookrightarrow  G_*(l+1)$.
    Indeed, once this is done, the whole system becomes a chain of isomorphisms after getting homologised, and each inclusion to the limit $\MC_*(X)$ as well.
    In particular, so does the inclusion $\MC_*(Y,Z) \hookrightarrow  \MC_*(X)$.

    This is essentially the only thing we have to change from the argument of~\cite{HW}.

    Let $l$ be fixed from now on, and given a chain complex $C_*$, write $\Sigma ^jC_*$ for the shifted chain complex $(\Sigma ^jC_*)_k = C_{k-j}$.
        
    \begin{proposition}[{\cite[Lemma 48]{HW}}]\label{proposition:A_is_acyclic}
        The complex $A_*(a,b)$ is acyclic.
    \end{proposition}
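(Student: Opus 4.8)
The plan is to exhibit an explicit chain contraction of $A_*(a,b)$, following the strategy of \cite[Lemma 48]{HW}. Observe first that $A_*(a,b)$ lives in degrees $\geq 1$: we have $A_0(a,b)=0$ since no $0$-\ppath{} can simultaneously start and end at the distinct points $a$ and $b$, and $A_1(a,b)=\ZZ\{\langle a,b\rangle\}$. Without loss of generality assume $a\in Y-Z$ and $b\in Z-Y$; the ``vice versa'' case produces an isomorphic complex via $\langle x_0,\dots,x_k\rangle\mapsto\langle x_k,\dots,x_0\rangle$, which is compatible with $\partial$ up to reindexing because betweenness is symmetric. Let $g:=\pi(b)\in W$ be the gate of $b$ with respect to $Z$; then $g\neq b$ since $b\notin W$, and $g$ lies between $b$ and every point of $W$.

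Define $h\colon A_k(a,b)\to A_{k+1}(a,b)$ on a generator $\xx=\langle a,x_1,\dots,x_{k-1},b\rangle$ by inserting $g$ immediately before $b$, up to sign, and by $0$ when this would create a repetition:
\[
    h\xx :=
    \begin{cases}
        (-1)^{k}\,\langle a,x_1,\dots,x_{k-1},g,b\rangle & \text{if } x_{k-1}\neq g,\\
        0 & \text{if } x_{k-1}=g
    \end{cases}
\]
(for $k=1$ the first case always applies, as $x_{k-1}=a\notin W$ while $g\in W$). This lands in $A_{k+1}(a,b)$ because $g\in W$ and, in the first case, $g$ differs from both its new neighbours. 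I would then prove $\partial h+h\partial=\mathrm{id}$ on $A_*(a,b)$, which makes the identity null-homotopic and hence forces $H_*(A_*(a,b))=0$. In $\partial(h\xx)$, the face deleting the inserted copy of $g$ returns exactly $\xx$ — the two signs $(-1)^k$ cancel — because $g$ lies between $x_{k-1}$ and $b$; for $k\geq 2$ this is the gate axiom (then $x_{k-1}\in W$), and for $k=1$ it says $g$ lies between $a$ and $b$, which is where the structure of the gated decomposition enters. The key compatibility driving the rest of the cancellation is that, for $x_{k-1},x_{k-2}\in W$, the point $x_{k-1}$ lies between $x_{k-2}$ and $b$ if and only if it lies between $x_{k-2}$ and $g$: one direction uses that $g$ is between $x_{k-2}$ and $b$ and between $x_{k-1}$ and $b$, the other is the triangle inequality. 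Granting this, the remaining faces of $h\xx$ pair off against the terms of $h(\partial\xx)$ with opposite signs, and one checks that the exceptional generators with $x_{k-1}=g$ (where $h$ vanishes) are recovered as the $(k-1)$-st term of $h(\partial\xx)$, all their other contributions dying because the relevant shortened \ppath{}s end in $\langle\dots,g,b\rangle$.

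The main obstacle is precisely this last verification. It is purely combinatorial but genuinely sign- and case-sensitive; the two delicate points are (i) the exceptional generators on which $h=0$, which must be annihilated by $h\partial$ rather than left over, and (ii) the interaction of the betweenness conditions for $\xx$ versus $h\xx$ at the position adjacent to the inserted gate, for which the defining property of $\pi(b)$ is exactly what is required. A more structural alternative would be to identify $A_*(a,b)$ with the reduced chain complex of a visibly contractible simplicial set (a cone on the ``$W$-part'' glued along the gate), but the hands-on homotopy above seems the most economical route.
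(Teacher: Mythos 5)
Your proposal is correct and follows essentially the same route as the paper: the same contracting homotopy that inserts the gate $\pi(b)$ just before $b$ with sign $(-1)^k$ (vanishing when $x_{k-1}=\pi(b)$), the same identity $\partial h+h\partial=\Id$, and the same three-way case analysis, including the recovery of the exceptional generators from $h\partial$ and the compatibility of betweenness relative to $b$ versus relative to the gate. The verification you defer is exactly the short computation the paper carries out, so nothing essential is missing.
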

    \begin{proof}
        Assume that $a\in Y-Z,b\in Z-Y$ (the other case is treated symmetrically).
        We construct a chain homotopy $s$ between $\Id$ and $0$ on $A_*(a,b)$ as follows:
        \begin{alignat*}{10}
            s_k: A_k(a,b) &\rightarrow A_{k+1}(a,b)\\
                 (a,x_1,\dots ,x_{k-1},b) &\mapsto  
                 \begin{cases}
                     (-1)^k (a,x_1,\dots ,x_{k-1},\pi (b),b) & \text{ if $\pi (b)\neq x_{k-1}$}\\
                    0                       & \text{ otherwise,}
                 \end{cases}
        \end{alignat*}
        where $\pi :Z\rightarrow W$ is the gate projection.
        
        Thus, we have to check that $\partial \circ s - s\circ \partial  = \Id$.
        Let us fix a \ppath{} $\xx = \langle a,x_1,\dots ,x_{k-1},b\rangle $.
        Recall that the boundary map $\partial _k:\MC_k(X) \rightarrow  \MC_{k-1}(X)$ is defined as the alternating sum of the maps $\partial _{k,i}$ which drop the index $i$ iff it lies between its neighbours. 
        Since $s$ behaves independently of the content of the \ppath{} up to $x_{k-1}$ and, for $i\leq k-2$, each $\partial _{k,i}$ behaves independently of the content after $x_{k-1}$, it follows that $\partial _{k+1,i}s - s\partial _{k,i} = 0$ for $i\leq k-2$.
        Thus, it remains to verify that
        \begin{equation}\label{equation:A_is_acyclic}
            (-1)^{k-1}\partial _{k+1,k-1}\circ s +  (-1)^{k}\partial _{k+1,k}\circ s + (-1)^{k-1}s\circ \partial _{k,k-1} = \Id.
        \end{equation}

        By definition of a gated set, and since $x_{i} \in  W$ for all $1\leq i\leq k-1$ we know that $\pi (b) \in  [b,x_{i}]$ holds.
        We now verify the equation by distinguishing cases:

        \begin{description}
            \item[If $x_{k-1}=\pi (b)$:]
                Then $s\xx=0$, by definition so the two leftmost terms of~\Cref{equation:A_is_acyclic} vanish; and $x_{k-1} \in  [x_{k-2},b]$, so that $\partial _{k,k-1}\xx = \langle a,\dots ,x_{k-2},b\rangle $.
                Since $x_{k-2}\neq x_{k-1}=\pi (b)$, $s\langle a,\dots ,x_{k-2},b\rangle $ is $(-1)^{k-1}\langle a,\dots ,x_{k-1},\pi (b),b\rangle  = (-1)^{k-1}\xx$, the rightmost term is $\xx$ and the equation is verified.

            \item[If $x_{k-1}\neq \pi (b)$:]
                Then $s\xx = (-1)^k\langle a,x_1,\dots ,x_{k-2},x_{k-1},\pi (b),b\rangle $, and  $\partial _{k+1,k}s\xx \allowbreak = (-1)^k\langle a,x_1,\dots ,x_{k-2},x_{k-1},b\rangle \allowbreak = (-1)^k\xx$; the middle term of~\Cref{equation:A_is_acyclic} is therefore $\xx$.
                
               \begin{description}
                   \item[{If moreover $x_{k-1}\in [x_{k-2},b]$ :}]  
               			Then $\partial_{k+1,k-1}\circ s\xx = (-1)^k\langle a,x_1,\dots ,x_{k-2},\pi (b),b\rangle $  and     $s\circ \partial _{k,k-1}\xx = (-1)^{k+1}\langle a,x_1,\dots ,x_{k-2},\pi (b),b\rangle $, hence they cancel out.  
                    \item[{If on the other hand $x_{k-1}\notin [x_{k-2},b]$ :}] 
               			Then $\partial _{k,k-1}\xx=0$ and since also  $x_{k-1} \notin [x_{k-2},\pi(b)]$, consequently $\partial_{k+1,k-1}\circ s\xx =0$.
               \end{description}
         \end{description}

    \end{proof}
    
    Define the set:
    \[
        J_Z(l) := \{\xx = \langle x_0,\dots ,x_{l}\rangle \ :\ x_0,\dots x_{l} \in  Y,\ x_l \notin Z\},
    \]
    and define $J_Y(l)$ symmetrically. 
    \begin{proposition}[{\cite[Lemma 51]{HW}}]\label{proposition:consecutive_Fs_are_quasi_isom}
        For any $b\in Z-Y$, we have an isomorphism:
        \[
            F_*(b,l+1)/F_*(b,l) \cong  \bigoplus _{\xx\in J_Z(l)} \Sigma ^{l} A_*(x_l,b).
        \]
        In particular, the quotient $F_*(b,l)/F_*(b,0)$ is acyclic.

        The same holds for $b\in Y-Z$ with $J_Y(l)$ instead of $J_Z(l)$.
    \end{proposition}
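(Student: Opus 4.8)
The plan is to produce an explicit chain isomorphism realising the displayed decomposition, and then to read off acyclicity from \Cref{proposition:A_is_acyclic}. Fix $b\in Z-Y$; the case $b\in Y-Z$ is entirely symmetric.

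First I would identify a basis of the quotient. A generator $\langle z_0,\dots,z_k\rangle$ of $F_k(b;l+1)$ satisfies $z_k=b$, $z_0,\dots,z_{k-1}\in Y$ and $z_{l+1},\dots,z_{k-1}\in Z$ — hence $z_{l+1},\dots,z_{k-1}\in W$, being in $Y$ as well — and it lies in $F_k(b;l)$ precisely when in addition $z_l\in Z$. So the classes of those \ppath{}s with $z_k=b$, $z_0,\dots,z_{k-1}\in Y$, $z_{l+1},\dots,z_{k-1}\in W$ and $z_l\notin Z$ form a basis of $F_k(b;l+1)/F_k(b;l)$; in particular this quotient vanishes in degrees $k\leq l$. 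Each such generator is the concatenation, along their common point $z_l$, of $\xx:=\langle z_0,\dots,z_l\rangle\in J_Z(l)$ with $\langle z_l,z_{l+1},\dots,z_k\rangle\in A_{k-l}(z_l,b)$; conversely, given $\xx=\langle x_0,\dots,x_l\rangle\in J_Z(l)$ and a generator $\langle y_0,\dots,y_m\rangle$ of $A_m(x_l,b)$ (so $y_0=x_l$), the \ppath{} $\langle x_0,\dots,x_{l-1},x_l,y_1,\dots,y_m\rangle$ is such a generator, with component $\xx$. These assignments are mutually inverse bijections between the bases of $F_*(b;l+1)/F_*(b;l)$ and of $\bigoplus_{\xx\in J_Z(l)}\Sigma^l A_*(x_l,b)$, and so induce an isomorphism of graded abelian groups.

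Next I would check compatibility with differentials. The key point — and the step I expect to need the most care — is that, in the quotient, the face $\partial_{k,i}$ of a generator $\langle z_0,\dots,z_k\rangle$ as above vanishes for every $i\leq l$: if $z_i$ is not between $z_{i-1}$ and $z_{i+1}$ this is immediate, and if it is, then deleting $z_i$ yields a \ppath{} whose entry in position $l$ is $z_{l+1}\in W\subseteq Z$, so it lies in $F_{k-1}(b;l)$ and represents $0$. Hence the boundary on the quotient is $\sum_{i=l+1}^{k-1}(-1)^i\partial_{k,i}$; this leaves the first $l+1$ entries $z_0,\dots,z_l$ — the component $\xx$ — untouched, and on the $\xx$-summand it is exactly the differential of $A_*(x_l,b)$ after the reindexing $i\mapsto i-l$, up to the overall sign $(-1)^l$ absorbed into the convention for $\Sigma^l$. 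This upgrades the bijection above to an isomorphism of chain complexes $F_*(b;l+1)/F_*(b;l)\cong\bigoplus_{\xx\in J_Z(l)}\Sigma^l A_*(x_l,b)$. Since each $A_*(x_l,b)$ is acyclic by \Cref{proposition:A_is_acyclic}, so is every shift $\Sigma^l A_*(x_l,b)$, hence so is the direct sum, and therefore $F_*(b;l+1)/F_*(b;l)$.

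For the final assertion I would use the filtration $F_*(b;0)\subseteq F_*(b;1)\subseteq\dots\subseteq F_*(b;l)$: for each $i<l$ it gives a short exact sequence $0\to F_*(b;i)/F_*(b;0)\to F_*(b;i+1)/F_*(b;0)\to F_*(b;i+1)/F_*(b;i)\to 0$ whose right-hand term is acyclic by the case just proved. Starting from $F_*(b;0)/F_*(b;0)=0$ and climbing the filtration, the long exact sequence in homology yields acyclicity of each $F_*(b;i+1)/F_*(b;0)$, in particular of $F_*(b;l)/F_*(b;0)$. The $Y\leftrightarrow Z$ version is obtained by the same argument with $J_Y(l)$ in place of $J_Z(l)$.
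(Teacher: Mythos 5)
Your proposal is correct and follows essentially the same route as the paper: identify the basis of the quotient, observe that $\partial_i$ vanishes in the quotient for $i\leq l$ because deleting an earlier entry moves $z_{l+1}\in Z$ into position $l$, and realise the resulting complex as the concatenation-indexed direct sum of shifted copies of $A_*(x_l,b)$. The only differences are cosmetic: you make the $(-1)^l$ sign in the shift convention explicit and spell out the filtration/long-exact-sequence argument for the acyclicity of $F_*(b;l)/F_*(b;0)$, which the paper leaves implicit.
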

    \begin{proof}

        By definition of the groups $F_k(b,l+1)$ and $F_k(b,l)$, the quotient is spanned freely by the \ppath{}s $\xx = \langle x_0,\dots ,x_k\rangle $ satisfying:
        \begin{enumerate}[label=(\roman*),noitemsep]
            \item
                $x_k=b$;
            \item
                $x_0,\dots ,x_{k-1} \in  Y$;
            \item
                $x_{l+1},\dots ,x_{k-1} \in  Z$;
            \item
                $x_l \notin  Z$;
        \end{enumerate}   
        where the first three conditions stem from membership in $F_*(b,l+1)$, and the last from non-membership in $F_*(b,l)$.
        For $i\leq l$ such a generator $\langle x_0,\dots ,x_l,x_{l+1},\dots ,x_k\rangle $ is mapped by $\partial _i$ into $F_*(b,l)$ since $x_{l+1}$ is moved to index $l$ and lies in $Z$.
        Thus, $\partial _i$ becomes the zero map in the quotient. 
        For $i>l$, $\partial _i$ maps a generator into $F_*(b,l+1)$.
        Therefore, the boundary map on $F_*(b,l+1)/F_*(b,l)$ is:
        \[
            \sum _{i>l} (-1)^i \partial _i.
        \]

        Similarly, the complex $\bigoplus _{\xx\in J_Z(l)} \Sigma ^{l} A_*(x_l,b)$ is freely spanned by pairs $\langle x_0,\dots ,x_l\rangle ,\langle x_l,\dots ,x_k\rangle $ satisfying:
        \begin{enumerate}[label=(\roman*),noitemsep]
            \item
                $x_k=b$;
            \item
                $x_{l+1},\dots ,x_{k-1}\in  Y\cap Z$;
            \item
                $x_0,\dots ,x_l\in Y$
            \item
                $x_l\notin Z$;
        \end{enumerate}
        where the first two conditions stem from membership of $\langle x_l,\dots ,x_k\rangle $ in $\Sigma ^lA_*(x_l,b)$, and the last two from membership of $\langle x_0,\dots ,x_l\rangle $ in $J_Z(l)$.
        The boundary map on $\bigoplus _{\xx\in J_Z(l)} \Sigma ^{l} A_*(x_l,b)$ is clearly just 
        \[
            \partial : \langle x_0,\dots ,x_l\rangle ,\langle x_l,\dots ,x_k\rangle  \mapsto  \langle x_0,\dots ,x_l\rangle , \sum _{i=1}^{k-l} (-1)^i \partial _i\langle x_l,\dots ,x_k\rangle 
        \]
        which corresponds to applying $\sum _{i>l} (-1)^i \partial _i$ to the merged \ppath{} $\langle x_0,\dots ,x_k\rangle $.

        The isomorphism is then defined by:
        \begin{alignat*}{10}
            \phi :&& \bigoplus _{\xx\in J(l)} \Sigma ^{l} A_*(x_l,b)                     &\rightarrow  F_*(b,l+1)/F_*(b,l) \\
              &&\underbrace{\langle x_0,\dots ,x_l\rangle }_{\in  J_Z(l)}, \underbrace{x_l,\dots ,x_k}_{\in  A_{k-l}(x_l,b)}    &\mapsto  \langle x_0,\dots ,x_k\rangle ,
        \end{alignat*}
        and the above analysis of the two sides shows that $\phi $ is an isomorphism of chain complexes.
    \end{proof}
    
    \begin{proposition}
        For any $b\in Y\Delta Z$, the quotient $B_*(b)/\tB_*(b)$ is acyclic.
    \end{proposition}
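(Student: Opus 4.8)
The plan is to exhibit $B_*(b)/\tB_*(b)$ as a direct limit of the acyclic complexes furnished by \Cref{proposition:consecutive_Fs_are_quasi_isom}, and then to invoke that homology commutes with direct limits. Throughout, assume $b\in Z-Y$; the case $b\in Y-Z$ is symmetric, using $J_Y$ in place of $J_Z$.

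First I would record two identifications among the subcomplexes of $\MC_*(X)$ introduced above. On the one hand, $F_*(b;0)=\tB_*(b)$: a \ppath{} $\langle x_0,\dots ,x_k\rangle $ with $x_k=b$ lies in $F_*(b;0)$ exactly when $x_0,\dots ,x_{k-1}$ all lie in $Y$ \emph{and} all lie in $Z$, i.e.\ in $Y\cap Z=W$, which is precisely the defining condition of $\tB_*(b)$. On the other hand, for a fixed degree $k$ and any $l\geq k$ the extra requirement ``$x_l,\dots ,x_{k-1}\in Z$'' in the definition of $F_k(b;l)$ is vacuous, so $F_k(b;l)=B_k(b)$. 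Together with the evident chain of inclusions $F_*(b;0)\leq F_*(b;1)\leq F_*(b;2)\leq \cdots $ (the constraint only weakens as $l$ grows), this presents $B_*(b)$ as the directed union $\varinjlim_l F_*(b;l)$. One should also check, by inspecting how the faces $\partial _{k,i}$ act on the membership conditions, that each $F_*(b;l)$ is genuinely a subcomplex of $\MC_*(X)$, so that the quotients below are chain complexes and the maps between them chain maps.

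Next, since forming the quotient by the fixed subcomplex $\tB_*(b)=F_*(b;0)$ commutes with direct limits, one obtains
\[
    B_*(b)/\tB_*(b)\ \cong\ \varinjlim_l\bigl(F_*(b;l)/F_*(b;0)\bigr),
\]
a direct limit of chain complexes, each of which is acyclic by \Cref{proposition:consecutive_Fs_are_quasi_isom}. As homology commutes with direct limits, $H_*\bigl(B_*(b)/\tB_*(b)\bigr)\cong \varinjlim_l H_*\bigl(F_*(b;l)/F_*(b;0)\bigr)=0$. (If one prefers to avoid colimit language: a cycle in $B_*(b)/\tB_*(b)$ is a finite $\ZZ$-combination of \ppath{}s, hence already lies in $F_*(b;l)/F_*(b;0)$ once $l$ exceeds the largest length occurring; \Cref{proposition:consecutive_Fs_are_quasi_isom} then exhibits it as a boundary there, and a fortiori in $B_*(b)/\tB_*(b)$.)

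There is no real obstacle here: the substance is entirely contained in \Cref{proposition:consecutive_Fs_are_quasi_isom} (which itself rests on the acyclicity of $A_*(a,b)$, \Cref{proposition:A_is_acyclic}), while what remains is the bookkeeping of the identification $F_*(b;0)=\tB_*(b)$ and the soft fact that homology commutes with filtered colimits. The one point requiring a little care — genuinely routine — is confirming that the $F_*(b;l)$ are subcomplexes, so that the telescoping argument is legitimate.
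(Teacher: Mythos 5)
Your argument is correct and takes essentially the same route as the paper: both exhibit $B_*(b)$ as the directed union of the $F_*(b;l)$ (via the identifications $F_*(b;0)=\tB_*(b)$ and $F_k(b;l)=B_k(b)$ for $l\geq k$) and conclude from \Cref{proposition:consecutive_Fs_are_quasi_isom} together with the compatibility of homology with direct limits. The only cosmetic difference is that the paper states the conclusion as the inclusion $\tB_*(b)\hookrightarrow B_*(b)$ inducing an isomorphism on homology, whereas you pass the colimit through the quotient and argue on $B_*(b)/\tB_*(b)$ directly; these are equivalent by the long exact sequence of the pair.
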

    \begin{proof}
        Consider the directed system:
        \[
            \tB_*(b) = F_*(b,0) \leq  F_*(b,1) \leq  \dots  \leq  F_*(b,l) \leq  F_*(b,l+1) \leq  \dots 
        \]
        Since we have inclusions $F_*(b,l) \leq  B_*(b)$ for all $l$, and for each $k\leq l$,
        \[
            F_k(b,l) = B_k(b),
        \]
        it follows that $B_*(b)$ is the direct limit of the above system.
        Passing to homology, each inclusion $F_*(b,l)\leq F_*(b,l+1)$ becomes an isomorphism by~\Cref{proposition:consecutive_Fs_are_quasi_isom}.
        It follows then that the inclusion $\tB_*(b) \leq  B_*(b)$ also becomes an isomorphism.
    \end{proof}

    Define now the set:
    \[
        K(l) := \{\xx = \langle x_{k-l},\dots ,x_{k}\rangle \ :\ x_{k-l} \in  Y\Delta Z\}.
    \]
    \begin{proposition}[{\cite[Proof of Theorem 29]{HW}}]\label{proposition:each_step_is_acyclic}
        We have an isomorphism:
        \[
            G_*(l+1)/G_*(l) \cong  \bigoplus _{\xx\in K(l)} \Sigma ^{l} B_*(x_{k-l})/\tB_*(x_{k-l}).
        \]
        In particular, the inclusion $G_*(l) \leq  G_*(l+1)$ is a quasi-isomorphism.
    \end{proposition}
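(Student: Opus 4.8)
The plan is to run the same three-step argument used for \Cref{proposition:consecutive_Fs_are_quasi_isom}: first write down a free basis of the quotient $G_*(l+1)/G_*(l)$, then identify the differential it inherits, then produce an explicit cut-and-merge bijection identifying this complex with the right-hand side as chain complexes; the final clause about the inclusion being a quasi-isomorphism follows formally from the previous proposition.

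First I would pin down the basis. A \ppath{} $\langle x_0,\dots,x_k\rangle$ lies in $G_k(l+1)$ but not in $G_k(l)$ exactly when its prefix $x_0,\dots,x_{k-l-1}$ is monochromatic (all in $Y$, or all in $Z$) while $x_0,\dots,x_{k-l}$ is not. Because $X = Y\cup Z$, such a prefix cannot lie entirely in $W = Y\cap Z$, so after possibly swapping the roles of $Y$ and $Z$ we may assume $x_0,\dots,x_{k-l-1}\in Y$ with at least one of them in $Y-Z$; this then forces $x_{k-l}\in Z-Y$. These \ppath{}s form a free basis of $G_k(l+1)/G_k(l)$, and cutting at position $k-l$ identifies each of them with a pair $\bigl(\langle x_{k-l},\dots,x_k\rangle,\langle x_0,\dots,x_{k-l}\rangle\bigr)$ with $\langle x_{k-l},\dots,x_k\rangle\in K(l)$ (indeed $x_{k-l}\in Y\Delta Z$) and $\langle x_0,\dots,x_{k-l}\rangle$ a basis element of $B_{k-l}(x_{k-l})/\tB_{k-l}(x_{k-l})$ — i.e.\ a basis element of $\bigoplus_{\xx\in K(l)}\Sigma^l B_*(x_{k-l})/\tB_*(x_{k-l})$ in degree $k$. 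This is the isomorphism $\phi$, merging a pair back into a single \ppath{} (the junction $x_{k-l-1}\in Y$, $x_{k-l}\in Z-Y$ being automatically genuine); it is well defined on the quotients since a \ppath{} with prefix inside $W$ lies in $G_*(l)$, matching the vanishing of its preimage in $B_*/\tB_*$.

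Next I would match differentials. On $G_*(l+1)/G_*(l)$ the differential is induced by $\sum_{i=1}^{k-1}(-1)^i\partial_i$, and the bookkeeping is: for $i\geq k-l$ deleting $x_i$ leaves the prefix $x_0,\dots,x_{k-l-1}\subseteq Y$ intact, so $\partial_i$ already lands in $G_*(l)$ and dies in the quotient; for $i\leq k-l-1$ the new length-$(k-l)$ prefix of $\partial_i\langle x_0,\dots,x_k\rangle$ is $\{x_0,\dots,x_{k-l}\}\setminus\{x_i\}$, which contains $x_{k-l}\in Z-Y$, hence lands in $G_*(l)$ iff the remaining points among $x_0,\dots,x_{k-l-1}$ all lie in $W$ — precisely the condition under which $\partial_i\langle x_0,\dots,x_{k-l}\rangle$ lands in $\tB_*(x_{k-l})$. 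So under $\phi$ the surviving differential becomes $\sum_{i=1}^{k-l-1}(-1)^i\partial_i$ acting on the $B_*/\tB_*$ factor and fixing the $K(l)$ label, which is exactly the differential on the right-hand side (up to the routine sign coming from the degree shift $\Sigma^l$). Hence $\phi$ is an isomorphism of chain complexes. For the ``in particular'': $B_*(b)/\tB_*(b)$ is acyclic by the preceding proposition, shifts and arbitrary direct sums of acyclic complexes are acyclic, so $G_*(l+1)/G_*(l)$ is acyclic, and the long exact sequence of the pair $(G_*(l+1),G_*(l))$ makes $G_*(l)\hookrightarrow G_*(l+1)$ a quasi-isomorphism.

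The main obstacle is the differential bookkeeping in the third paragraph: delimiting exactly which faces survive in $G_*(l+1)/G_*(l)$ and checking, face by face and with the correct signs after the $\Sigma^l$-shift, that this agrees with what survives in $B_*(x_{k-l})/\tB_*(x_{k-l})$. Keeping the $Y\leftrightarrow Z$ symmetrisation and the points of $W$ straight in the second paragraph — so the claimed basis is genuinely a basis, with no omissions or double counting — is the other spot where care is needed.
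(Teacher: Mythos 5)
Your proposal is correct and follows essentially the same route as the paper: the same free basis of $G_*(l+1)/G_*(l)$ (monochromatic prefix not contained in $W$, forcing $x_{k-l}\in Y\Delta Z$), the same face-by-face bookkeeping showing that $\partial_i$ dies in the quotient for $i\geq k-l$ and matches the $B_*/\tB_*$ differential for $i\leq k-l-1$, and the same cut-and-merge isomorphism $\psi$. The paper leaves the ``in particular'' clause implicit, whereas you spell out the acyclicity-plus-long-exact-sequence step, but this is just added detail, not a different argument.
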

    \begin{proof}
        We proceed similarly to the proof of~\Cref{proposition:consecutive_Fs_are_quasi_isom}.

        The chain complex $G_*(l+1)/G_*(l)$ is spanned freely by the \ppath{}s  $\xx = \langle x_0,\dots x_{k-l-1},x_{k-l},\dots ,x_k\rangle $ satisfying either:
        \begin{enumerate}[label=(\roman*),noitemsep]
            \item 
                $x_0,\dots ,x_{k-l-1}$ all lie in $Y$;
            \item
                $x_0,\dots ,x_{k-l-1}$ do not all lie in $W$;
            \item
                $x_{k-l}$ does not lie in $Y$;
        \end{enumerate}
        or symmetrically with $Y$ replaced by $Z$.
        Indeed, the first condition stems from membership in $G_*(l+1)$, while the other two conditions from non-membership in $G_*(l)$: if $x_{k-l}\in Y$ held, we would clearly have $\xx \in  G_*(l)$, and if all $x_0,\dots ,x_{k-l-1}$ lay in $W$, they would also all lie in $Z$, and assuming $x_{k-l}\notin Y$, necessarily $x_{k-l}\in Z$ so that $\xx \in  G_*(l)$.

        For $i\geq k-l$, the image of such a generator under $\partial _i$ does not satisfy the last condition, hence is necessarily mapped to zero in the quotient.
        For $i\leq k-l-1$, its image under $\partial _i$ will be zero if the second condition becomes unsatisfied, and is “kept” otherwise.

        For $b\in Y\Delta Z$, the chain complex $\Sigma ^l B_*(b)/\tB_*(b)$ is spanned freely by the \ppath{}s $\xx = \langle x_0,\dots ,x_{k-l}\rangle $ satisfying: 
        \begin{enumerate}[label=(\roman*),noitemsep]
            \item
                $x_{k-l}=b$;
            \item
                $x_0,\dots ,x_{k-l-1} \in  Y$;
            \item
                there exists $0\leq i\leq k-l-1$ with $x_i \notin  Z$;
        \end{enumerate}
        if $b\in Z-Y$, and symmetrically if $b\in Y-Z$.
        The image of a generator of $\Sigma ^l B_*(b)/\tB_*(b)$ under $\partial _i$ will still satisfy the first two conditions, and thus be zero depending on whether the last condition becomes unsatisfied.

        Thus, the chain complex $\bigoplus _{\xx \in  K(l)} \Sigma ^{l} B_*(x_{k-l})/\tB_*(x_{k-l})$ is spanned by pairs of \ppath{}s  $\langle x_0,\dots ,x_{k-l}\rangle ,\langle x_{k-l},\dots ,x_k\rangle $ such that, in addition to the above conditions with $b:=x_{k-l}$ on the first \ppath{}, we have
        \begin{enumerate}[label=(\roman*),resume,noitemsep]
            \item
                $x_{k-l} \in  Y\Delta Z$.
        \end{enumerate}
        The boundary map on $\bigoplus _{\xx \in  K(l)} \Sigma ^{l} B_*(x_{k-l})/\tB_*(x_{k-l})$ is just the sum of the boundary maps on each $B_*(x_{k-l})/\tB_*(x_{k-l})$, which we observe having the same behaviour as on $G_*(l+1)/G_*(l)$.

        Thus, we define our isomorphism as:
        \begin{alignat*}{10}
            \psi : && \bigoplus _{\xx \in  K(l)} \Sigma ^{l} B_*(x_{k-l})/\tB_*(x_{k-l}) &\rightarrow  G_*(l+1)/G_*(l) \\
               &&\underbrace{\langle x_{k-l},\dots ,x_k\rangle }_{\in  K(l)}, \underbrace{\langle x_0,\dots ,x_{k-l}\rangle }_{\in  B_{k-l}(x_{k-l})/\tB_*(x_{k-l})}    &\mapsto  \langle x_0,\dots ,x_k\rangle .
        \end{alignat*}
        and having equal generating sets and agreeing boundary maps, this indeed is a chain complex isomorphism.
    \end{proof}

    \begin{proof}[Proof of {\cref{theorem:metric_excision}}]
        Each inclusion in the direct system
        \[
            \MC_*(Y,Z) = G_*(0) \leq  \dots  \leq  G_*(l) \leq  G_*(l+1) \leq  \dots  \dots  \leq  (\MC_*(X))
        \]
        is a quasi-isomorphism by~\Cref{proposition:each_step_is_acyclic}, and $\MC_*(X)$ is the direct limit of this system.
        Thus, the inclusions induce isomorphisms:
        \[
            \MH_*(Y,Z) = H(G_*(0)) \cong   \dots   \cong  H(G_*(l)) \cong  H(G_*),
        \]
        with $\MH_*(X)$ (along with the morphisms induced by inclusions into $\MC_*(X)$) their limit.
        It follows that each inclusion, among which $\MC_*(Y,Z)\hookrightarrow \MH_*(X)$ is a quasi-isomorphism.
    \end{proof}

    \section{Diagonality}\label{section:diagonality} 

    In the first section, we have seen that median graphs are diagonal (in the sense of Hepworth and Willerton).
    Knowing that median \emph{graphs} are special cases of median \emph{spaces} motivates us to try and find a corresponding description for median spaces.
    In this section, we introduce the notion of diagonality for metric spaces and verify some of its properties.
    As hoped, we will see in the next section that median spaces indeed are diagonal.
    This section and the next should make a strong case for this being a worthy generalisation of the original notion of diagonality.

    \begin{definition}[Diagonality]
        A space $X$ is said to be \emph{diagonal} if $\MH_k(X)$ is generated by chains of saturated \ppath{}s, for all $k,l$. 
    \end{definition}
    Let $S_k(X)$ denote the span of saturated \ppath{}s, as a submodule of $\MC_k(X)$.
    It is useful to remark that:
    \begin{proposition}\label{proposition:S_cap_B_is_trivial}
        The support of elements in $S_k(X)$ and $\MB_k(X)$ are disjoint.
        In particular, $S_k(X)\cap \MB_k(X) = \{0\}$.
    \end{proposition}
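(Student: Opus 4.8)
The plan is to show directly that no saturated $k$-\ppath{} can occur in the support of a boundary, from which both assertions follow at once. Recall that $\MC_k(X)$ is free abelian on the set of $k$-\ppath{}s, so each chain $c$ has a well-defined support $\mathrm{supp}(c)$ --- the set of \ppath{}s occurring with nonzero coefficient in its unique basis expansion --- and supports are subadditive: $\mathrm{supp}(c+c') \subseteq \mathrm{supp}(c) \cup \mathrm{supp}(c')$.

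First I would analyse a single face map. Let $\yy = \langle y_0, \dots, y_{k+1}\rangle$ be a $(k+1)$-\ppath{} and fix $1 \le i \le k$ with $\partial_{k+1,i}\yy \neq 0$. By definition $y_i$ lies between $y_{i-1}$ and $y_{i+1}$; moreover, since $\yy$ is a \ppath{} we have $y_i \neq y_{i-1}$ and $y_i \neq y_{i+1}$, so $y_i$ in fact lies \emph{strictly} between them, i.e. $y_i \in \,]y_{i-1}, y_{i+1}[$. Hence the $k$-\ppath{} $\partial_{k+1,i}\yy = \langle y_0, \dots, \widehat{y_i}, \dots, y_{k+1}\rangle$ has, between two of its consecutive entries, the strict interval $]y_{i-1}, y_{i+1}[$, which is nonempty as it contains $y_i$. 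So $\partial_{k+1,i}\yy$ is not saturated.

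Now since $\partial_{k+1}\yy = \sum_{i=1}^{k} (-1)^i \partial_{k+1,i}\yy$, its support is contained in the union of the supports of the terms, each of which is either empty or a single non-saturated $k$-\ppath{} by the previous step. Passing to a general chain $c \in \MC_{k+1}(X)$ and using subadditivity of supports once more, $\mathrm{supp}(\partial_{k+1}c)$ consists entirely of non-saturated \ppath{}s; that is, every element of $\MB_k(X)$ is supported on non-saturated \ppath{}s. On the other hand every element of $S_k(X)$ is, by definition, a combination of saturated \ppath{}s, hence supported on saturated \ppath{}s. The two kinds of support are disjoint as sets of \ppath{}s, which is the first claim; and if $c \in S_k(X) \cap \MB_k(X)$ then $\mathrm{supp}(c)$ would be simultaneously a set of saturated and of non-saturated \ppath{}s, forcing $\mathrm{supp}(c) = \emptyset$, i.e. $c = 0$.

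I do not expect a genuine obstacle: the content is the elementary observation that deleting a point which realises the triangle inequality strictly leaves behind a witness to non-saturation, and the only thing to be slightly careful about is that subadditivity of supports prevents cancellation from conjuring a saturated \ppath{} in $\partial_{k+1}c$ that did not appear in any $\partial_{k+1,i}$ applied to a generator of $c$.
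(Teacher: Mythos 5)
Your argument is correct and is exactly the paper's proof, merely spelled out in full: the paper's one-line justification is that any \ppath{} appearing in a boundary arises by deleting a point $y_i$ lying (necessarily strictly, since consecutive entries of a \ppath{} are distinct) between its neighbours, so that $y_i$ witnesses non-saturation of the resulting \ppath{}, and no cancellation can produce a saturated \ppath{} in the support. Nothing further is needed.
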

    \begin{proof}
        By the definition of the boundary operator, any \ppath{} in the boundary comes from a larger \ppath{}, hence is not saturated.
    \end{proof}
    
    Thus, diagonality can be restated in different ways:
    \begin{proposition}\label{proposition:diagonal_implies_homology_is_Z_cap_S}
        The following are equivalent:
        \begin{enumerate}
            \item\label{X_is_diago}
                $X$ is diagonal;
            \item\label{Z_eq_Z_cap_S_oplus_B}
                $\MZ_k(X) =  (\MZ_k(X) \cap  S_k(X)) \oplus  \MB_k(X)$;
            \item\label{decomp_sat}
                Given a cycle $\sigma  = \sigma _S + \sigma ' \in  \MZ_k(X)$, with $\sigma _S$ the part corresponding to saturated \ppath{}s and $\sigma '$ the rest, we must have $\sigma ' \in  \MB_k(X)$;
            \item\label{incl_then_quot_is_isom}
                The “inclusion-then-quotient” morphism $\MZ_k(X) \cap  S_k(X) \rightarrow  \MH_k(X)$ is an isomorphism.
        \end{enumerate}
    \end{proposition}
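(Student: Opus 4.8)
The plan is to reduce everything to the single direct-sum decomposition underlying all four conditions. Write $\MC_k(X) = S_k(X)\oplus N_k(X)$, where $N_k(X)$ denotes the span of the non-saturated $k$-\ppath{}s; this is just the splitting of the free module $\MC_k(X)$ along the partition of its canonical basis into saturated and non-saturated \ppath{}s, and the decomposition $\sigma=\sigma_S+\sigma'$ occurring in (\ref{decomp_sat}) is precisely the associated one. By \Cref{proposition:S_cap_B_is_trivial} we have $\MB_k(X)\subseteq N_k(X)$ and $S_k(X)\cap \MB_k(X)=\{0\}$; in particular $\bigl(\MZ_k(X)\cap S_k(X)\bigr)\cap \MB_k(X)=\{0\}$, so the sum $\bigl(\MZ_k(X)\cap S_k(X)\bigr)+\MB_k(X)$ is automatically direct, and the ``inclusion-then-quotient'' map $\iota\colon \MZ_k(X)\cap S_k(X)\to \MH_k(X)$ is automatically injective, its kernel being $\MZ_k(X)\cap S_k(X)\cap \MB_k(X)=\{0\}$. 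So in (\ref{Z_eq_Z_cap_S_oplus_B}) and (\ref{incl_then_quot_is_isom}) only one half (the ``spanning'', resp. the surjectivity) carries content.

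Next I would unwind the definition of diagonality into (\ref{incl_then_quot_is_isom}). A class in $\MH_k(X)$ lies in $\operatorname{im}\iota$ exactly when it is represented by a cycle supported on saturated \ppath{}s; hence $\MH_k(X)$ is generated by (classes of) chains of saturated \ppath{}s if and only if $\operatorname{im}\iota = \MH_k(X)$, i.e.\ $\iota$ is surjective. Combined with the automatic injectivity noted above, this gives (\ref{X_is_diago})$\Leftrightarrow$(\ref{incl_then_quot_is_isom}).

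It then remains to close the loop (\ref{incl_then_quot_is_isom})$\Rightarrow$(\ref{Z_eq_Z_cap_S_oplus_B})$\Rightarrow$(\ref{decomp_sat})$\Rightarrow$(\ref{incl_then_quot_is_isom}) by elementary chasing. For (\ref{incl_then_quot_is_isom})$\Rightarrow$(\ref{Z_eq_Z_cap_S_oplus_B}): surjectivity of $\iota$ means that every $\sigma\in\MZ_k(X)$ satisfies $\sigma-\tau\in\MB_k(X)$ for some $\tau\in\MZ_k(X)\cap S_k(X)$, whence $\MZ_k(X)=\bigl(\MZ_k(X)\cap S_k(X)\bigr)+\MB_k(X)$, the sum being direct by the first paragraph. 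For (\ref{Z_eq_Z_cap_S_oplus_B})$\Rightarrow$(\ref{decomp_sat}): given a cycle $\sigma$, write it as $\tau+\beta$ with $\tau\in\MZ_k(X)\cap S_k(X)\subseteq S_k(X)$ and $\beta\in\MB_k(X)\subseteq N_k(X)$; uniqueness of the $S_k(X)\oplus N_k(X)$ decomposition forces $\sigma_S=\tau$ and $\sigma'=\beta\in\MB_k(X)$. For (\ref{decomp_sat})$\Rightarrow$(\ref{incl_then_quot_is_isom}): for $\sigma\in\MZ_k(X)$, condition (\ref{decomp_sat}) gives $\sigma'\in\MB_k(X)\subseteq\MZ_k(X)$, so $\sigma_S=\sigma-\sigma'\in\MZ_k(X)\cap S_k(X)$ and $\iota(\sigma_S)=[\sigma]$, which is surjectivity of $\iota$.

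I do not anticipate a real obstacle: once \Cref{proposition:S_cap_B_is_trivial} and the splitting $\MC_k(X)=S_k(X)\oplus N_k(X)$ are in hand, all of this is formal. The only point deserving care is making precise what ``$\MH_k(X)$ is generated by chains of saturated \ppath{}s'' should mean --- namely that the subgroup of $\MH_k(X)$ generated by the classes of those saturated chains which happen to be cycles is all of $\MH_k(X)$ --- and observing that this is literally the surjectivity of $\iota$; with that pinned down, the four conditions are visibly interchangeable.
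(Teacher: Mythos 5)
Your proof is correct and follows essentially the same route as the paper: both arguments rest on \Cref{proposition:S_cap_B_is_trivial} to get the directness of the sum (equivalently, injectivity of the inclusion-then-quotient map) for free, and then reduce the remaining equivalences to elementary chasing of the decomposition of a cycle into its saturated and non-saturated parts. Your explicit splitting $\MC_k(X)=S_k(X)\oplus N_k(X)$ and the different ordering of the implications are only cosmetic differences from the paper's argument.
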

    \begin{proof}
        We show \ref{X_is_diago}$\Rightarrow $\ref{Z_eq_Z_cap_S_oplus_B}$\Rightarrow $\ref{incl_then_quot_is_isom}$\Rightarrow $\ref{X_is_diago}and \ref{X_is_diago}$\Leftrightarrow $\ref{decomp_sat}.
        \begin{itemize}
            \item[\ref{X_is_diago}$\Rightarrow $\ref{Z_eq_Z_cap_S_oplus_B}]
                By~\Cref{proposition:S_cap_B_is_trivial}, $\MZ_k(X)\cap S_k(X)$ and $\MB_k(X)$ intersect trivially.
                By diagonality, given $\sigma \in \MZ_k(X)$, there exists $\sigma _s \in  \MZ_k(X)\cap S_k(X)$ such that $\sigma $ and $\sigma _s$ are equal in homology, i.e. $\sigma -\sigma _s \in  \MB_k(X)$.
                Thus, $\MZ_k(X)\cap S_k(X) + \MB_k(X) = \MZ_k(X)$.
            \item[\ref{Z_eq_Z_cap_S_oplus_B}$\Rightarrow $\ref{incl_then_quot_is_isom}]
                Clear.
            \item[\ref{incl_then_quot_is_isom}$\Rightarrow $\ref{X_is_diago}]
                Clear.
            \item[\ref{X_is_diago}$\Rightarrow $\ref{decomp_sat}]
                Fix $\sigma  = \sigma _S + \sigma ' \in  \MZ_k(X)$ as in~\ref{decomp_sat}.
                By diagonality, there exists $\sigma _S' \in  \MZ_k(X)\cap S_k(X)$ and $\sigma '' \in  \MB_k(X)$ such that 
                $
                    \sigma _S + \sigma ' = \sigma _S' + \sigma '',
                $
                hence 
                $
                    \sigma _S - \sigma _S' = \sigma '' - \sigma '
                $.
                Since $\sigma _S - \sigma _S'$ has support saturated chains and $\sigma ''$ has no saturated chains in its support, necessarily $\sigma ' = \sigma ''$ and $\sigma _S = \sigma _S'$.
            \item[\ref{decomp_sat}$\Rightarrow $\ref{X_is_diago}]
                Clear.
        \end{itemize}

    \end{proof}

    In particular, it follows directly from the last item that:
    \begin{corollary}\label{corollary:diagonal_implies_no_torsion}
        A diagonal space has torsion-free homology.
    \end{corollary}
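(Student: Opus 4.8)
The plan is to read the claim off item~\ref{incl_then_quot_is_isom} of~\Cref{proposition:diagonal_implies_homology_is_Z_cap_S}. For a diagonal space $X$ that proposition tells us that the inclusion-then-quotient map $\MZ_k(X)\cap S_k(X)\rightarrow \MH_k(X)$ is an isomorphism of abelian groups, so it is enough to check that the source group $\MZ_k(X)\cap S_k(X)$ is torsion-free.

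The remaining steps are elementary. First I would recall that $\MC_k(X)$ is, by the very definition of the magnitude complex, the free abelian group on the set of $k$-\ppath{}s of $X$, hence in particular torsion-free. Second, $\MZ_k(X)\cap S_k(X)$ is a subgroup of $\MC_k(X)$, and every subgroup of a torsion-free abelian group is again torsion-free. Combining these two observations with the isomorphism above yields that $\MH_k(X)$ is torsion-free for every $k$ (and, unravelling the $\RR^{\geq 0}$-grading, for every length $l$ as well).

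I do not expect any genuine obstacle here: all the real content sits in~\Cref{proposition:diagonal_implies_homology_is_Z_cap_S}, and what is left is just the standard fact that subgroups of free abelian groups are torsion-free. If one wanted to be slightly more generous one could even record that $\MH_k(X)$ is in fact \emph{free} abelian in the diagonal case, since subgroups of free abelian groups are free; but torsion-freeness is all that will be used later.
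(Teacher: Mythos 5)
Your proposal is correct and matches the paper's intended argument: the paper derives the corollary directly from item~\ref{incl_then_quot_is_isom} of \Cref{proposition:diagonal_implies_homology_is_Z_cap_S}, identifying $\MH_k(X)$ with the subgroup $\MZ_k(X)\cap S_k(X)$ of the free abelian group $\MC_k(X)$, exactly as you do. Your added remark that the homology is in fact free abelian is a harmless (and correct) strengthening.
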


    To support our choice of terminology:
    \begin{proposition}
        A graph is diagonal in the sense of Hepworth and Willerton iff it is diagonal in the above sense. 
    \end{proposition}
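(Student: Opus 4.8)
The plan is to reduce both notions of diagonality to statements about the homogeneous pieces $\MH_k^l(X)=\MZ_k^l(X)/\MB_k^l(X)$, using the one graph-specific fact that forces them to coincide. Recall that Hepworth and Willerton call a graph $X$ diagonal precisely when $\MH_k^l(X)=0$ for all $k\neq l$. The key ingredient, which I would establish first, is the identity $S_k(X)=\MC_k^k(X)$ for any graph $X$: a $k$-\ppath{} $\xx=\langle x_0,\dots ,x_k\rangle$ in a graph is saturated iff $d(x_i,x_{i+1})=1$ for every $i$. Indeed, if $d(x_i,x_{i+1})\geq 2$, the vertex adjacent to $x_i$ on a geodesic edge-path to $x_{i+1}$ lies strictly between them, so $]x_i,x_{i+1}[\neq\emptyset$; conversely, no vertex can lie strictly between two adjacent vertices, since such a $z$ would give $d(x_i,z)+d(z,x_{i+1})=1$ with both summands $\geq 1$. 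Since $l(\xx)=\sum_i d(x_i,x_{i+1})\geq k$ with equality iff every summand equals $1$, the saturated $k$-\ppath{}s are exactly the $k$-\ppath{}s of length $k$, i.e. the generators of $\MC_k^k(X)$.

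Next I would record that $\partial$ preserves the length grading, so $\MH_k(X)=\bigoplus_l\MH_k^l(X)$ and $S_k(X)\cap\MZ_k(X)=\MC_k^k(X)\cap\MZ_k(X)=\MZ_k^k(X)$ is homogeneous of degree $k$. For the implication ``HW-diagonal $\Rightarrow$ diagonal'': given a cycle $\tau\in\MZ_k(X)$, decompose it into homogeneous pieces $\tau=\sum_l\tau^l$, each itself a cycle (as $\partial$ preserves grading); for $l\neq k$ we have $[\tau^l]\in\MH_k^l(X)=0$, hence $\tau^l\in\MB_k(X)$, so $[\tau]=[\tau^k]$ with $\tau^k\in\MZ_k^k(X)=S_k(X)\cap\MZ_k(X)$. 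Thus the inclusion-then-quotient morphism $S_k(X)\cap\MZ_k(X)\to\MH_k(X)$ is surjective, hence — it is always injective by \Cref{proposition:S_cap_B_is_trivial} — an isomorphism, so $X$ is diagonal by \Cref{proposition:diagonal_implies_homology_is_Z_cap_S}.

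For the converse ``diagonal $\Rightarrow$ HW-diagonal'', fix $l\neq k$ and an arbitrary cycle $\tau^l\in\MZ_k^l(X)$. Diagonality expresses $[\tau^l]$ as a $\ZZ$-linear combination of classes of cycles supported on saturated \ppath{}s, i.e. cycles in $S_k(X)=\MC_k^k(X)$, so $[\tau^l]$ lies in the summand $\MH_k^k(X)$; but it also lies in $\MH_k^l(X)$, and these intersect trivially in the direct sum $\MH_k(X)=\bigoplus_l\MH_k^l(X)$, forcing $[\tau^l]=0$. Hence $\MH_k^l(X)=0$ for all $l\neq k$, which is exactly HW-diagonality. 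There is no genuine obstacle in this argument: the entire content is the graph-specific identification of saturated \ppath{}s with length-$k$ \ppath{}s in the first step, and everything afterward is routine bookkeeping — the only subtlety being that one must momentarily reinstate the length grading that we have otherwise chosen to de-emphasise.
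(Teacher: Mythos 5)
Your proof is correct and follows essentially the same route as the paper: the paper's (much terser) argument likewise rests on the graph-specific fact that saturated $k$-\ppath{}s are exactly the $k$-\ppath{}s of length $k$, i.e.\ $S_k(X)=\MC_k^k(X)$, and then identifies the two notions via the length grading. You merely spell out the bookkeeping with the homogeneous decomposition $\MH_k(X)=\bigoplus_l\MH_k^l(X)$ that the paper leaves implicit.
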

    \begin{proof}
        In a graph, if $k\neq l$, no $k$-\ppath{} of length $l$ can be saturated.
        Thus, having vanishing homology outside the diagonal and having homology groups generated by (sums of) saturated \ppath{}s is equivalent.
    \end{proof}

    \begin{proposition}
        A diagonal Menger convex space has vanishing homology (except possibly at $k=0$).
    \end{proposition}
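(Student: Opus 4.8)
The plan is to exploit the fact that Menger convexity rules out non-trivial saturated \ppath{}s, and then to invoke diagonality. First I would recall that a $k$-\ppath{} $\xx=\langle x_0,\dots ,x_k\rangle$ has, by definition, $x_i\neq x_{i+1}$ for every $i$. Hence for $k\geq 1$ it contains at least one pair of consecutive distinct points, say $x_0\neq x_1$; since $X$ is Menger convex, the strict interval $]x_0,x_1[$ is non-empty, so $\xx$ fails to be saturated. Consequently $S_k(X)=0$ for all $k\geq 1$.

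The second step is to feed this into diagonality. By hypothesis $\MH_k(X)$ is generated by chains of saturated \ppath{}s; more precisely, by \Cref{proposition:diagonal_implies_homology_is_Z_cap_S}\ref{incl_then_quot_is_isom}, the inclusion-then-quotient morphism $\MZ_k(X)\cap S_k(X)\rightarrow \MH_k(X)$ is an isomorphism. For $k\geq 1$ the source is $\MZ_k(X)\cap S_k(X)=\MZ_k(X)\cap\{0\}=\{0\}$, so $\MH_k(X)=0$.

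Finally I would point out why $k=0$ genuinely escapes the argument: a $0$-\ppath{} $\langle x_0\rangle$ is vacuously saturated, since there are no strict intervals to inspect, so $S_0(X)=\MC_0(X)$ and nothing forces $\MH_0(X)$ to vanish. This is exactly why the statement carries the caveat "except possibly at $k=0$". There is essentially no obstacle in this proof; the only point requiring care is the bookkeeping at $k=0$ and making explicit that saturation is a vacuous condition on length-$0$ sequences.
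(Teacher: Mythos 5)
Your proof is correct and follows the same route as the paper, which simply observes that a Menger convex space has no saturated $k$-\ppath{} for $k\geq 1$ and lets diagonality do the rest. Your version merely makes the one-line argument explicit (including the vacuous saturation of $0$-\ppath{}s explaining the $k=0$ caveat), which is a fine elaboration but not a different approach.
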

    \begin{proof}
        A Menger convex space has no saturated \ppath{}.
    \end{proof}

    In the next few propositions, we verify stability of diagonality under some usual constructions.

    \begin{proposition}\label{proposition:diagonality_preserved_by_filtrations}
        If $(U_\alpha )_\alpha $ is a filtration of a space $X$ such that each $U_\alpha $ is diagonal, then so is $X$. 
    \end{proposition}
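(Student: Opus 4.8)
The plan is to work with the reformulation of diagonality from \Cref{proposition:diagonal_implies_homology_is_Z_cap_S}, specifically item~\ref{decomp_sat}: it suffices to show that for every cycle $\sigma\in\MZ_k(X)$, writing $\sigma=\sigma_S+\sigma'$ where $\sigma_S$ collects the \ppath{}s in the support of $\sigma$ that are saturated \emph{in $X$} and $\sigma'$ the rest, one has $\sigma'\in\MB_k(X)$. The strategy is to push $\sigma$ down into a single, carefully chosen member $U_\alpha$ of the filtration, apply diagonality there, and transport the conclusion back up.

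First I would record two elementary facts. (i)~For each $\alpha$ the inclusion $U_\alpha\hookrightarrow X$ is an isometric embedding, since $U_\alpha$ carries the restricted metric; hence the condition ``$x_i$ is between $x_{i-1}$ and $x_{i+1}$'' — which involves only distances among points of $U_\alpha$ — is the same computed in $U_\alpha$ or in $X$, so $\MC_*(U_\alpha)$ is a subcomplex of $\MC_*(X)$; in particular $\MZ_k(U_\alpha)=\MZ_k(X)\cap\MC_k(U_\alpha)$ and $\MB_k(U_\alpha)\subseteq\MB_k(X)$. (ii)~Any finite subset of $X$ is contained in some $U_\alpha$: each point lies in some member of the filtration, and by directedness of the net $A$ and the inclusions $U_\alpha\subseteq U_\beta$ one finds a common upper bound. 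Now, given $\sigma\in\MZ_k(X)$ with decomposition $\sigma=\sigma_S+\sigma'$, note that $\sigma$ has finite support, and that each \ppath{} $\langle x_0,\dots,x_k\rangle$ occurring in $\sigma'$ is non-saturated in $X$, so admits some index $i$ and some point $z\in\,]x_i,x_{i+1}[$. Collecting one such witness $z$ for each \ppath{} of $\sigma'$, together with all points appearing in $\sigma$, gives a finite subset of $X$; choose $\alpha$ with $U_\alpha$ containing it. Then $\sigma\in\MC_k(U_\alpha)$, hence $\sigma\in\MZ_k(U_\alpha)$ by~(i).

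The point of this choice is that the saturated/non-saturated splitting of $\sigma$ computed \emph{inside} $U_\alpha$ is again exactly $\sigma_S+\sigma'$: a \ppath{} saturated in $X$ is a fortiori saturated in any subspace (the strict intervals can only shrink), so every \ppath{} of $\sigma_S$ is saturated in $U_\alpha$; and every \ppath{} of $\sigma'$ has, by construction, a strict midpoint lying in $U_\alpha$, hence is non-saturated in $U_\alpha$. Applying item~\ref{decomp_sat} of \Cref{proposition:diagonal_implies_homology_is_Z_cap_S} to the diagonal space $U_\alpha$ then gives $\sigma'\in\MB_k(U_\alpha)\subseteq\MB_k(X)$, as desired; by item~\ref{decomp_sat} (now for $X$) this proves $X$ diagonal.

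The one genuinely subtle point — and the reason the graph-theoretic \Cref{proposition:filtrations_of_diagonal_graphs_are_diagonal} does not transcribe verbatim — is precisely the discrepancy between ``saturated in $U_\alpha$'' and ``saturated in $X$'': passing to a subspace can create new saturated \ppath{}s but never destroy one, so without the witness-point argument above one would only obtain that $\sigma$ is homologous to a chain of \ppath{}s saturated in $U_\alpha$, which is weaker than what diagonality of $X$ demands. Once that bookkeeping is handled, the remainder is the routine transfer along the subcomplex inclusions $\MC_*(U_\alpha)\hookrightarrow\MC_*(X)$, exactly as in the proof of \Cref{proposition:filtrations_of_diagonal_graphs_are_diagonal}. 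I expect this saturation discrepancy to be the main (indeed only) obstacle.
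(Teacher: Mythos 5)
Your proof is correct and follows essentially the same route as the paper's: choose $U_\alpha$ large enough to contain both the support of $\sigma$ and a witness of non-saturation for each \ppath{} of $\sigma'$, so that the saturated/non-saturated splitting is unchanged in $U_\alpha$, then apply diagonality of $U_\alpha$ and the inclusion $\MB_k(U_\alpha)\leq\MB_k(X)$. The only difference is that you spell out the bookkeeping (the subcomplex inclusions and the directedness of the net) more explicitly than the paper does.
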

    \begin{proof}
        Fix $\sigma  \in  \MZ_k(X)$, and write $\sigma  = \sigma _S + \sigma '$ where $\sigma _S$ consists of the saturated \ppath{}s of $\sigma $.
        Fix $\alpha $ large enough that it contains the support of $\sigma $, and for each non-saturated \ppath{} in $\sigma '$, also contains a “witness” to non-saturation of the \ppath{}.
        Then, $\sigma  \in  \MZ_k(X)$ and $\sigma _S$ still consists of saturated \ppath{}s \emph{in $U_\alpha $}, and $\sigma '$ of non-saturated \ppath{}s \emph{in $U_\alpha $}.
        Since $U_\alpha $ is diagonal, $\sigma ' \in  \MB_k(U_\alpha ) \leq  \MB_k(X)$.
    \end{proof}

    \begin{proposition}
        An $l^{1}$ product of diagonal spaces is diagonal.
    \end{proposition}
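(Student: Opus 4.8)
The plan is to mimic the strategy already used for graphs in \Cref{proposition:products_of_diagonal_graphs_are_diagonal}: combine the Künneth theorem (\Cref{proposition:Kunneth}) with the fact that a diagonal space has torsion-free homology (\Cref{corollary:diagonal_implies_no_torsion}). So let $X,Y$ be diagonal. Since $\MH_{*-1}(X)$ is torsion-free, the $\mathrm{Tor}$ term in the Künneth short exact sequence vanishes, and the cross product $\square$ induces an isomorphism $\MH_*(X)\otimes\MH_*(Y)\xrightarrow{\ \cong\ }\MH_*(X\times Y)$. It remains to show that the right-hand side is generated by (classes of) saturated \ppath{}s in $X\times Y$.

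The key point is that interleavings of saturated \ppath{}s are saturated. Concretely, using the product identity $[(x,y),(x',y')]=[x,y]\times[x',y']$ recorded before \Cref{proposition:Kunneth}, one checks that for a staircase path $\sigma$, the interleaved \ppath{} $\xx\ovs{\sigma}{\times}\yy$ is saturated in $X\times Y$ exactly when $\xx$ is saturated in $X$ and $\yy$ is saturated in $Y$: at a flat index $m$ the relevant strict interval is governed by a strict interval of $\xx$, at a wall index by one of $\yy$, and at a corner index the strict interval $](\xx\ovs{\sigma}{\times}\yy)_{m-1},(\xx\ovs{\sigma}{\times}\yy)_{m+1}[$ is empty because a product interval $[a,a']\times[b,b']$ with $a\ne a'$, $b\ne b'$ whose endpoints are the two diagonal corners contains the other two corners, so no point can be strictly between them unless such a point lies in a degenerate factor, contradicting saturation of $\xx$ or $\yy$. (I would isolate this as a small lemma.) Hence $\square$ carries $S_k(X)\otimes S_l(Y)$ into $S_{k+l}(X\times Y)$.

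Now take a homology class in $\MH_*(X\times Y)$. By the Künneth isomorphism it is $\sum_j [f_j]\square[g_j]=\sum_j[f_j\square g_j]$ with $[f_j]\in\MH_*(X)$, $[g_j]\in\MH_*(Y)$. Since $X$ and $Y$ are diagonal, \Cref{proposition:diagonal_implies_homology_is_Z_cap_S} lets us choose each $f_j\in\MZ_*(X)\cap S_*(X)$ and each $g_j\in\MZ_*(Y)\cap S_*(Y)$ representing these classes. Then each $f_j\square g_j$ is a $\ZZ$-linear combination of interleavings $f_j^{(a)}\ovs{\sigma}{\times}g_j^{(b)}$ of saturated \ppath{}s, hence by the lemma lies in $S_*(X\times Y)$; and it is a cycle since $\square$ is a chain map. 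Therefore the original class is represented by an element of $\MZ_*(X\times Y)\cap S_*(X\times Y)$, which is the definition of diagonality for $X\times Y$.

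The main obstacle is the saturation lemma — specifically verifying the corner case, where one must rule out strict intermediate points of $](\xx\ovs{\sigma}{\times}\yy)_{m-1},(\xx\ovs{\sigma}{\times}\yy)_{m+1}[$ using the product structure of the $l^1$ interval together with the facts that $\xx,\yy$ are \ppath{}s (consecutive points distinct) and saturated. Everything else is bookkeeping: the vanishing of $\mathrm{Tor}$ is immediate from \Cref{corollary:diagonal_implies_no_torsion}, and the passage from "homology generated by saturated cycles" to diagonality is exactly \Cref{proposition:diagonal_implies_homology_is_Z_cap_S}.
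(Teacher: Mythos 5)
Your strategy is exactly the paper's: Künneth plus torsion-freeness of diagonal homology (\Cref{corollary:diagonal_implies_no_torsion}) reduces everything to the claim that an interleaving $\xx\ovs{\sigma}{\times}\yy$ is saturated iff both $\xx$ and $\yy$ are, which the paper simply asserts in one sentence. That claim is true, but your sketch of its proof is off target: saturation concerns the strict intervals $]z_m,z_{m+1}[$ between \emph{consecutive} points of the interleaving, whereas your flat/wall/corner analysis addresses $]z_{m-1},z_{m+1}[$, the intervals relevant to the boundary operator, not to saturation. In particular your corner-case assertion is false as stated: at a corner $[z_{m-1},z_{m+1}]=[x_a,x_{a+1}]\times[y_b,y_{b+1}]$ contains the two off-diagonal points $(x_{a+1},y_b)$ and $(x_a,y_{b+1})$, which lie strictly between the endpoints, so that strict interval is certainly not empty — but it is also irrelevant. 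The correct argument is one line: each consecutive pair of the interleaving changes exactly one coordinate, and by the $l^1$ product identity $](x_a,y_b),(x_{a+1},y_b)[\ =\ ]x_a,x_{a+1}[\times\{y_b\}$, so the strict interval of a horizontal step is empty iff the corresponding strict interval of $\xx$ is (similarly for vertical steps and $\yy$); since a staircase uses every consecutive pair of $\xx$ and of $\yy$, the equivalence follows. With the lemma stated and proved this way, the rest of your argument coincides with the paper's proof.
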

    \begin{proof}
        By applying the (metric) Künneth formula.
        Let $X,Y$ be diagonal spaces.
        For any fixed $n,l$, we have a short exact sequence
        \begin{alignat*}{10}
            0 \rightarrow  &\bigoplus _{n+l=k} \MH_{n}(X)\otimes \MH_{l}(Y) &\ \ovs{ \square }{\longrightarrow }\ &\MH_k(X\times Y) \rightarrow  \mathrm{Tor}(\dots ,\dots ) \rightarrow  0.\\
            \intertext{The torsion part being zero (\Cref{corollary:diagonal_implies_no_torsion}), an isomorphism}
                &\bigoplus _{n+l=k} \MH_{n}(X)\otimes \MH_{l}(Y) &\ \ovs{ \square }{\longrightarrow }\ &\MH_k(X\times Y)
        \end{alignat*}
        remains.
        Since each groups $\MH_{n}(X),\MH_{l}(Y)$ is generated by (sums of) saturated \ppath{}s, the whole LHS is generated by tensors of such, and $\MH_k(X\times Y)$ by their image.
        Noting that if $\zz$ is an interleaving of two \ppath{}s $\xx,\yy$ as in the definition of the map $ \square $, then $\zz$ is saturated iff both $\xx,\yy$ are, we conclude that $\MH_k(X\times Y)$ is generated by (sums of) saturated \ppath{}s.
    \end{proof}

    \begin{proposition}
        If $(X,Y,Z)$ is a gated decomposition and $Y,Z$ are convex and diagonal, then so is $X$.
    \end{proposition}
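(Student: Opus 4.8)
The plan is to deduce diagonality of $X$ from the surjectivity half of Mayer--Vietoris, using convexity to keep track of saturation. Since $(X,Y,Z)$ is a gated decomposition, the metric Mayer--Vietoris theorem proved above supplies a short exact sequence
\[
0 \to \MH_*(W) \to \MH_*(Y)\oplus\MH_*(Z) \xrightarrow{(i_Y)_*\oplus(i_Z)_*} \MH_*(X) \to 0,
\]
where $i_Y\colon Y\to X$, $i_Z\colon Z\to X$ are the inclusions. In particular $\MH_k(X)=\im(i_Y)_*+\im(i_Z)_*$ for every $k$, so it suffices to show that the image under $(i_Y)_*$ of a generating set of $\MH_k(Y)$ consists of classes represented by elements of $\MZ_k(X)\cap S_k(X)$, and symmetrically for $Z$; then $\MH_k(X)$ is generated by classes of saturated chains, which is exactly diagonality.

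The crux — and really the only non-formal point — is that convexity of $Y$ inside $X$ forces a \ppath{} saturated in $Y$ to remain saturated in $X$. Here I would argue: $Y$ carries the restricted metric, so $i_Y$ is an isometric embedding; hence $\MC_*(i_Y)$ is just the inclusion of $\MC_*(Y)$ into $\MC_*(X)$ as a subcomplex, since betweenness among points of $Y$ is the same computed in $Y$ or in $X$. Moreover it carries $S_k(Y)$ into $S_k(X)$: if $a,b\in Y$, then $[a,b]_X\subseteq Y$ by convexity, so $]a,b[_X = {}]a,b[_Y$, and an empty strict interval in $Y$ stays empty in $X$. Therefore for any cycle $\sigma\in\MZ_k(Y)\cap S_k(Y)$ the chain $\MC_k(i_Y)(\sigma)$ lies in $\MZ_k(X)\cap S_k(X)$ and represents $(i_Y)_*[\sigma]$.

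Finally I would invoke diagonality of $Y$: $\MH_k(Y)$ is generated by classes $[\sigma]$ with $\sigma\in\MZ_k(Y)\cap S_k(Y)$, so by the previous paragraph each $(i_Y)_*[\sigma]$ is the class of a cycle supported on saturated \ppath{}s of $X$; likewise for $Z$, using its diagonality and convexity. Combined with $\MH_k(X)=\im(i_Y)_*+\im(i_Z)_*$, every class in $\MH_k(X)$ is a sum of classes of saturated chains, so $X$ is diagonal.

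I do not expect a serious obstacle: once the saturation-transfer observation is in place the statement follows immediately from the metric Mayer--Vietoris theorem, and the hypothesis "$Y,Z$ convex" is precisely what makes that observation go through. (Note that, unlike the product case, no torsion-freeness input is needed, since the Mayer--Vietoris sequence carries no $\mathrm{Tor}$ term.)
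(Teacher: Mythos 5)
Your proposal is correct and follows essentially the same route as the paper: the paper's proof likewise invokes the surjectivity half of the metric Mayer--Vietoris sequence and notes that, by convexity, saturated chains in $Y$ or $Z$ remain saturated in $X$. Your write-up merely fills in the details (the identification $]a,b[_X={}]a,b[_Y$ and the fact that $\MC_*(i_Y)$ is an inclusion of subcomplexes) that the paper leaves implicit.
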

    \begin{proof}
        By applying a fragment of the (metric) Mayer-Vietoris sequence:
        We have an epimorphism:
        \[
            \MH_*(Y)\oplus \MH_*(Z) \rightarrow  \MH_*(X),
        \]
        and the image of saturated chains in either $Y$ or $Z$ are still saturated in $X$ by convexity.
    \end{proof}

    If $X,Y$ are two metric spaces, and $f:X\rightarrow Y$ is injective map, we say that $f$ \emph{preserves betweenness} if $y \in  [x,z]$ implies $fy \in  [fx,fz]$; and \emph{reflects betweenness} if $fy \in  [fx,fz]$ implies $y \in  [x,z]$.
    In case $f$ both preserves and reflects betweenness, we say it is a \emph{betweenness embedding}.
    If it is also surjective, it becomes a \emph{betweenness isomorphism}.

    \begin{proposition}
        If $f:X\rightarrow Y$ is a betweenness embedding, then $f$ induces a morphism of chain complexes:
        \begin{alignat*}{10}
            f_* :\MC_*(X) &\rightarrow  \MC_*(Y)\\
            (x_0,\dots ,x_k) &\mapsto  (fx_0,\dots ,fx_k).
        \end{alignat*}
        If $f$ is bijective, this turns into an isomorphism.
    \end{proposition}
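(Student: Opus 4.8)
The plan is to check that $f_*$ as defined is a well-defined chain map, and then that bijectivity of $f$ promotes it to an isomorphism. The two things to verify for well-definedness are: (i) $f_*$ sends $k$-\ppath{}s to $k$-\ppath{}s, i.e. $\langle fx_0,\dots,fx_k\rangle$ still has consecutively distinct entries; and (ii) $f_*$ commutes with the boundary operator $\partial$.

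For (i), note that if $\xx = \langle x_0,\dots,x_k\rangle$ is a \ppath{} then $x_i \neq x_{i+1}$ for all $i$; since $f$ is injective (betweenness embeddings are assumed injective), $fx_i \neq fx_{i+1}$, so $\langle fx_0,\dots,fx_k\rangle$ is again a \ppath{}. Hence $f_*$ is a well-defined map $\MC_k(X) \to \MC_k(Y)$ on generators, extended linearly.

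For (ii), it suffices to check $\partial_{k,i} f_* = f_* \partial_{k,i}$ on a generator $\xx$ for each $1 \le i \le k-1$. Both $\partial_{k,i}$-operators act by deleting the $i$-th entry, guarded by a betweenness condition: $\partial_{k,i}\xx = \langle x_0,\dots,\widehat{x_i},\dots,x_k\rangle$ if $x_i \in [x_{i-1},x_{i+1}]$ and $0$ otherwise. So the key point is precisely that $x_i \in [x_{i-1},x_{i+1}]$ in $X$ if and only if $fx_i \in [fx_{i-1},fx_{i+1}]$ in $Y$ — the "only if" is preservation of betweenness, the "if" is reflection. When the condition holds, $f_*\partial_{k,i}\xx = \langle fx_0,\dots,\widehat{fx_i},\dots,fx_k\rangle = \partial_{k,i}f_*\xx$; when it fails (on both sides simultaneously), both are $0$. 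Summing with signs gives $\partial_k f_* = f_* \partial_k$. The only mild subtlety — and the closest thing to an "obstacle", though it is minor — is making sure the guard conditions on the two sides genuinely match up, which is exactly what "betweenness embedding" was cooked up to guarantee; reflection of betweenness is what prevents $f_*$ from accidentally killing a face that survives downstairs.

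Finally, if $f$ is bijective, its inverse $f^{-1}:Y\to X$ is also a betweenness embedding: it is injective (being a bijection), it preserves betweenness because $f$ reflects it, and it reflects betweenness because $f$ preserves it. So $(f^{-1})_* : \MC_*(Y) \to \MC_*(X)$ is a chain map by the same argument, and on generators $(f^{-1})_* \circ f_* = \Id$ and $f_* \circ (f^{-1})_* = \Id$ by functoriality of the tuple construction. Hence $f_*$ is an isomorphism of chain complexes.
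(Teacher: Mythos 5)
Your proof is correct and follows the same route as the paper's (much terser) argument: injectivity guarantees that \ppath{}s map to \ppath{}s, and the preservation/reflection of betweenness makes the guard conditions for $\partial_{k,i}$ agree on both sides, so $f_*$ commutes with the boundary; the observation that $f^{-1}$ is again a betweenness embedding handles the bijective case. No gaps.
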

    \begin{proof}
        Injectivity plus the betweenness preserving+reflecting makes the \ppath{} map commute with boundaries.
    \end{proof}

    \begin{proposition}\label{proposition:diagonality_preserved_by_betweenness}
        If $f:X\rightarrow Y$ is a betweenness isomorphism between metric spaces, and $Y$ is diagonal, then so is $X$, and vice versa.
    \end{proposition}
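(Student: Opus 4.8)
The plan is to reduce everything to the isomorphism of chain complexes induced by $f$. Since a betweenness isomorphism is in particular a bijective betweenness embedding, the preceding proposition gives an isomorphism of chain complexes $f_*:\MC_*(X)\rightarrow\MC_*(Y)$ sending $(x_0,\dots,x_k)$ to $(fx_0,\dots,fx_k)$. The first thing I would verify is that $f_*$ also respects saturation. A $k$-\ppath{} $\xx=(x_0,\dots,x_k)$ is saturated exactly when every strict interval $]x_i,x_{i+1}[$ is empty; because $f$ is a bijection that both preserves and reflects betweenness, $z\in\,]x_i,x_{i+1}[$ in $X$ holds if and only if $fz\in\,]fx_i,fx_{i+1}[$ in $Y$. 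Hence $\xx$ is saturated iff $f_*\xx$ is, and so $f_*$ carries $S_k(X)$ isomorphically onto $S_k(Y)$.

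Next I would record the consequences of $f_*$ being an isomorphism of chain complexes: it restricts to isomorphisms $\MZ_k(X)\cong\MZ_k(Y)$ and $\MB_k(X)\cong\MB_k(Y)$, hence descends to an isomorphism $\MH_k(X)\cong\MH_k(Y)$, and, combining with the previous paragraph, it also restricts to an isomorphism $\MZ_k(X)\cap S_k(X)\cong\MZ_k(Y)\cap S_k(Y)$.

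Finally I would invoke \Cref{proposition:diagonal_implies_homology_is_Z_cap_S}, item \ref{incl_then_quot_is_isom}: a space is diagonal precisely when its ``inclusion-then-quotient'' map $\MZ_k(\bullet)\cap S_k(\bullet)\rightarrow\MH_k(\bullet)$ is an isomorphism for every $k$. The maps $f_*$ above assemble into a square commuting with these two inclusion-then-quotient morphisms (for $X$ on top, for $Y$ on the bottom), whose two vertical arrows are the isomorphisms just produced. Therefore the top arrow is an isomorphism if and only if the bottom one is, which gives: $X$ is diagonal iff $Y$ is diagonal.

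I do not expect a real obstacle here; the one step that warrants a sentence of care is the claim that $f_*$ preserves saturation, and that is exactly where surjectivity of $f$ is used — a witness to non-saturation of $f_*\xx$ in $Y$ must be pulled back to a witness for $\xx$ in $X$, which needs $f$ to hit that witness. For a mere betweenness embedding one only obtains one implication, which is why the hypothesis is a betweenness \emph{isomorphism} and why the conclusion is symmetric in $X$ and $Y$.
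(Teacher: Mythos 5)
Your proposal is correct and takes essentially the same route as the paper: the paper's (much terser) proof likewise reduces everything to the chain-complex isomorphism $f_*$, noting that it matches up saturated \ppath{}s, cycles and boundaries on both sides. Your extra care about surjectivity being needed to pull back witnesses of non-saturation, and the explicit appeal to \Cref{proposition:diagonal_implies_homology_is_Z_cap_S}, are just a fleshed-out version of what the paper leaves implicit.
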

    \begin{proof}
        Since $f$ preserves and reflects betweenness, images of saturated \ppath{}s are saturated, and vice versa.
        The same can be said of homological cycles and boundaries.
    \end{proof}

    \begin{proposition}\label{proposition:diagonality_preserved_by_retracts}
        Retracts of diagonal spaces are diagonal.
    \end{proposition}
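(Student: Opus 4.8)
The plan is to reproduce the graph-theoretic argument only at the level of generating homology classes, and to supply the rigidity that the graph case got for free from \Cref{proposition:S_cap_B_is_trivial}. Write $\pi\colon X\to Y$ for the retraction and $\iota\colon Y\hookrightarrow X$ for the inclusion, so $\pi\circ\iota=\Id_Y$. First I would record the formal consequences: $\iota$ is a betweenness embedding, so $\iota_*$ realises $\MC_*(Y)$ as the subcomplex of $\MC_*(X)$ spanned by \ppath{}s lying in $Y$; functoriality gives a chain map $\pi_*\colon\MC_*(X)\to\MC_*(Y)$ with $\pi_*\iota_*=\Id$, acting as the identity on $\MC_*(Y)$; and hence $\MH_k(\iota)$ is split injective, with $e:=\MH_k(\iota)\circ\MH_k(\pi)$ an idempotent of $\MH_k(X)$ fixing $\im \MH_k(\iota)$ pointwise.

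Then I would run the following argument. Fix a cycle $\gamma\in\MZ_k(Y)$. As $X$ is diagonal, \Cref{proposition:diagonal_implies_homology_is_Z_cap_S}~\ref{Z_eq_Z_cap_S_oplus_B} provides a representative $\rho\in\MZ_k(X)\cap S_k(X)$ of the class $[\iota_*\gamma]\in\MH_k(X)$; write $\iota_*\gamma=\rho+\partial\beta$. Since $[\rho]=[\iota_*\gamma]$ lies in $\im \MH_k(\iota)$, it is fixed by $e$, so $\iota_*\pi_*\rho$ is homologous to $\rho$ in $X$, say $\rho-\iota_*\pi_*\rho=\partial\mu$. The chain $\iota_*\pi_*\rho$ is supported on \ppath{}s lying in $Y$; comparing the two sides, the part of $\rho$ supported on \ppath{}s that meet $X\setminus Y$ coincides with the same part of $\partial\mu$. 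But every \ppath{} in the support of $\rho$ is saturated, whereas by \Cref{proposition:S_cap_B_is_trivial} no saturated \ppath{} appears in $\partial\mu\in\MB_k(X)$, and a chain that is simultaneously spanned by saturated and by non-saturated \ppath{}s is zero; therefore $\rho$ is in fact supported on \ppath{}s lying entirely in $Y$. A \ppath{} in $Y$ that is saturated in $X$ is a fortiori saturated in $Y$, so $\rho\in\MZ_k(Y)\cap S_k(Y)$; and applying $\pi_*$ to $\iota_*\gamma=\rho+\partial\beta$, using $\pi_*\rho=\rho$, gives $\gamma-\rho\in\MB_k(Y)$. Thus every class of $\MH_k(Y)$ has a saturated representative, and $Y$ is diagonal by \Cref{proposition:diagonal_implies_homology_is_Z_cap_S}~\ref{incl_then_quot_is_isom}.

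The step I expect to be the main obstacle, and the reason the one-line graph argument (surjectivity on homology preserves concentration on the diagonal) does not transcribe, is precisely this vanishing of the ``off-$Y$'' part of $\rho$. A non-expanding retraction need not preserve betweenness: collapsing a thin geodesic triangle can turn a saturated \ppath{} into a non-saturated one, so one cannot simply claim that $\pi_*$ carries saturated chains to saturated chains. The way around this, as above, is never to push saturated chains forward: one pulls a saturated representative of the relevant class back from $X$ and then lets the disjointness of saturated \ppath{}s from boundaries (\Cref{proposition:S_cap_B_is_trivial}) force that representative to already live in $Y$.
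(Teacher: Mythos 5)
Your proof is correct, but it takes a genuinely different route from the paper's. The paper works directly with the given cycle $\sigma=\sigma_S+\sigma'\in\MZ_k(Y)$: it first shows that a \ppath{} lying in $Y$ is saturated in $Y$ if and only if it is saturated in $X$ (the nontrivial direction uses that the retraction is non-expanding and fixes $y_i,y_{i+1}$, so an interior witness $x\in X$ pushes forward to an interior witness $fx$), so the saturated/non-saturated decomposition of $\sigma$ is the same in $Y$ and in $X$; diagonality of $X$ in the form of \Cref{proposition:diagonal_implies_homology_is_Z_cap_S}~\ref{decomp_sat} then gives $\sigma'=\partial\tau$ in $X$, and applying $f_*$ yields $\sigma'=\partial f_*\tau\in\MB_k(Y)$. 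You instead pull a saturated representative $\rho$ of $[\iota_*\gamma]$ down from $X$ and use the split idempotent $e=\MH_k(\iota)\circ\MH_k(\pi)$ together with \Cref{proposition:S_cap_B_is_trivial} to force $\rho$ to be supported on \ppath{}s in $Y$. Both arguments hinge on the same two ingredients --- the chain-level retraction $\pi_*$ with $\pi_*\iota_*=\Id$, and the disjointness of saturated supports from boundaries --- but yours trades the elementary ``saturated in $Y$ iff saturated in $X$'' lemma for the extra homological step $\rho-\iota_*\pi_*\rho=\partial\mu$ and the support comparison; the paper's version is shorter and stays entirely at the chain level, while yours only needs the trivial direction of the saturation comparison. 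Your closing observation --- that $\pi_*$ need not carry saturated chains to saturated chains, which is why the one-line graph argument via surjectivity on homology does not transcribe --- is accurate and is precisely why the paper's proof also never pushes a saturated chain forward, only the bounding chain $\tau$.
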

    \begin{proof}
        Let $f:X\rightarrow Y$ a retraction.
        
        If $\yy$ is a \ppath{} in $Y$, then $\yy$ is saturated in $Y$ iff it is saturated in $X$.
        Indeed, if, say, there exists some $x\in X$ strictly between $y_i$ and $y_{i+1}$, then, since $f$ is non-expanding and fixes $y_i,y_{i+1}$, it follows that $fx$ is strictly between $y_i$ and $y_{i+1}$.
        This shows that non-saturatedness in $X$ implies the same in $Y$.
        Conversely, if $\yy$ is saturated in $X$ it is a fortiori also in $Y$.

        Now, fix $\sigma  = \sigma _S + \sigma ' \in  \MZ_k(Y)$ a cycle.
        Since $Y\subseteq X$, $\sigma $ is still a cycle in $X$, and its decomposition into “saturated+non-saturated” in $X$ is still $\sigma  = \sigma _S + \sigma '$.
        Thus, assuming $X$ is diagonal, $\sigma '\in  \MB_k(X)$; that is, there exists some $\tau \in \MC_{k+1}(X)$ with $\sigma ' = \partial \tau $.
        But then, applying $f_*:\MC_*(X) \rightarrow  \MC_*(Y)$, we get $f_*\sigma ' = f_*\partial \tau  = \partial f_*\tau $, and since $\sigma '$ has support in $Y$, $f_*\sigma ' = \sigma '$.
        Thus $\sigma ' = \partial f_*\tau  \in  \MB_k(Y)$. 
        This shows that $Y$ is diagonal.
    \end{proof}

    \section{Median Spaces are Diagonal}

    We will need the following fact due to Avann:
    \begin{proposition}[\cite{A}]\label{proposition:finite_median_spaces_are_almost_graphs}
        If $X$ is a finite median space, there exists a finite graph $G(X)$ and a betweenness isomorphism $\phi :X\rightarrow G(X)$.
    \end{proposition}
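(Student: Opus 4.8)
The plan is to build the graph $G(X)$ directly on the underlying set of $X$ by declaring two points adjacent precisely when they are "neighbours" in the betweenness structure, i.e.\ when the strict interval between them is empty, then verify that the path metric of this graph induces exactly the original betweenness relation. So first I would set $V(G(X)) := X$ and put an edge between $x$ and $y$ iff $x \neq y$ and $]x,y[ = \emptyset$ (equivalently, $x,y$ form a saturated $1$-\ppath{}). Since $X$ is finite, every interval $[x,y]$ is finite, and an easy induction on $|[x,y]|$ shows that any two points are joined by a saturated \ppath{} lying inside $[x,y]$; in particular $G(X)$ is connected, so its path metric $d_G$ is well defined and finite.

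The heart of the argument is the comparison of $d$-betweenness with $d_G$-betweenness. One direction is the statement that a $d$-saturated \ppath{} $\langle x_0,\dots,x_k\rangle$ is a $d_G$-geodesic, equivalently that $d_G(x,y)$ equals the minimal length of a saturated \ppath{} from $x$ to $y$, and that all such minimal saturated \ppath{}s have the same length; this is where median structure is essential. I would use the median operation: given $x,y$ and a neighbour $x'$ of $x$ with $x' \in [x,y]$, the map $z \mapsto m(x',z,y)$ should exhibit a bijection-like correspondence between saturated \ppath{}s $x \to y$ through $x'$ and saturated \ppath{}s $x' \to y$, keeping lengths controlled — this is the classical "no odd cycles / well-graded" behaviour of median (more generally, of interval-monotone or of "Peano/geodesic" median) spaces, and it yields that $[x,y]$ with its induced betweenness is a finite distributive lattice / median algebra, hence embeds in a cube, so that $d_G$ restricted to $[x,y]$ agrees with the Hamming-type distance counting the "splits" separating $x$ from $y$. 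Granting this, $z \in [x,y]$ in the metric $d$ iff $d(x,z)+d(z,y)=d(x,y)$ iff the splits separating $x$ from $y$ are exactly those separating $x$ from $z$ together with those separating $z$ from $y$ iff $d_G(x,z)+d_G(z,y)=d_G(x,y)$, i.e.\ iff $z \in [x,y]$ in $G(X)$. Thus $\phi := \Id_X : X \to G(X)$ preserves and reflects betweenness, and being a bijection it is a betweenness isomorphism in the sense defined above.

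The step I expect to be the main obstacle is precisely establishing that $d_G$ (the combinatorial path metric through neighbours) is "betweenness-compatible" with $d$ — concretely, that saturated \ppath{}s between a fixed pair of points all have the same length and are geodesics in $G(X)$. Without medianness this can fail (e.g.\ $X = \{$ three midpoints-free configuration $\}$ with non-uniform gaps), so the argument must genuinely invoke the existence and naturality of $m(-,-,-)$; the clean way is to prove that each interval $[x,y]$, equipped with the ternary operation $m$, is a finite median algebra, invoke the representation of finite median algebras as median subalgebras of finite Boolean cubes (Birkhoff/Isbell), and transport the cube's betweenness back. I would isolate this as a lemma ("for finite median $X$, every interval is a finite median algebra, hence embeds betweenness-preservingly-and-reflectingly into a finite hypercube") and then the proposition follows by the splitting/counting argument above together with $\Cref{proposition:diagonality_preserved_by_betweenness}$ being available downstream. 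The remaining verifications — finiteness of $G(X)$, connectedness, that $\phi$ is a well-defined bijection — are routine given finiteness of $X$.
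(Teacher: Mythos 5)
The paper gives no proof of this proposition: it is quoted from Avann's work, and the construction underlying Avann's theorem is exactly the one you propose — take $V(G(X))=X$, join $x$ to $y$ when $]x,y[\,=\emptyset$, and show that the identity map preserves and reflects betweenness. So your architecture is the intended one, and the routine parts (finiteness of $G(X)$, connectedness by induction on $|[x,y]|$, bijectivity of $\phi$) are fine.

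The problem is that the entire mathematical content of the proposition sits inside the step you yourself flag as the main obstacle, and your treatment of it both defers the work to unproved ``classical'' facts and contains a false intermediate claim. First, as written, ``a $d$-saturated \ppath{} is a $d_G$-geodesic'' is wrong: in the $4$-cycle $C_4$ (a median graph, hence a finite median space) the \ppath{} $\langle x,a,y,b,x,a,y\rangle$ winding around the cycle is saturated but has combinatorial length $6$, while $d_G(x,y)=2$; the statement you actually need concerns \emph{metrically monotone} saturated \ppath{}s, i.e.\ those with $\sum_i d(x_i,x_{i+1})=d(x_0,x_k)$, together with the assertion that any two such \ppath{}s from $x$ to $y$ have the same number of steps. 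Second, the split-counting equivalence at the heart of your argument silently uses two facts that are never established: (a) that $d$ decomposes as a \emph{positively weighted} sum of cut pseudometrics over a family of walls of $X$ (the measured-wall / $L^1$ structure of finite median metric spaces), and (b) that this family coincides with the family of halfspace-walls of $G(X)$, so that both $d$-betweenness and $d_G$-betweenness of $z$ relative to $x,y$ become ``no wall separates $z$ from both $x$ and $y$.'' Point (b) in particular presupposes that $G(X)$ is a median graph whose wall structure matches that of $(X,d)$ — which is essentially the theorem being proved. Embedding each interval into a Boolean cube as a median algebra controls the combinatorial betweenness but does not by itself relate it to the original metric $d$ with its non-uniform gaps; for that one still needs the weighted decomposition $d=\sum_W\lambda_W d_W$ with all $\lambda_W>0$ and the fact that a saturated $1$-\ppath{} crosses exactly one wall. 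So the sketch is the right road map, but as it stands the proposition is not proved by it; one should either cite Avann, as the paper does, or supply these lemmas explicitly.
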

    
    Another important property of median spaces:
    \begin{proposition}
        Any median space has a filtration by finite median subspaces.
    \end{proposition}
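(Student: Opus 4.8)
The plan is to realise $X$ as a directed union of the median subalgebras generated by its finite subsets. Recall that a median space carries the ternary median operation $m(x,y,z)$, which always lies in each of the three metric intervals $[x,y]$, $[y,z]$, $[x,z]$; moreover $[x,y] = \{w : m(x,w,y) = w\}$, and with this operation $X$ becomes a median algebra (the one substantial axiom, the five-point distributive law, is standard for median metric spaces). Call $N \subseteq X$ a \emph{subalgebra} if it is closed under $m$. The first, easy observation is that every subalgebra $N$, with the restricted metric, is again a median space: for $x,y,z \in N$ the metric interval of $x$ and $y$ computed inside $N$ is $[x,y] \cap N$, so the intersection of the three intervals computed in $N$ equals $\{m(x,y,z)\} \cap N = \{m(x,y,z)\}$, since $N$ is closed under $m$.

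Next I would fix a finite $F \subseteq X$ and let $\langle F \rangle$ denote the subalgebra it generates; the crux is that $\langle F \rangle$ is finite, i.e. the classical fact that finitely generated median algebras are finite. Here is a self-contained route. Work inside $M := \langle F \rangle$, itself a median algebra, and recall that a median algebra admits a separating family of halfspaces, where a \emph{halfspace} is a subset $H$ such that both $H$ and its complement are convex (interval-closed). I claim every halfspace $H$ of $M$ is the convex hull in $M$ of $F \cap H$. Indeed, put $C := \mathrm{conv}_M(F \cap H)$ and $D := \mathrm{conv}_M(F \setminus H)$; both are convex, hence closed under $m$ (as $m(a,b,c) \in [a,b]$), we have $C \subseteq H$ and $D \subseteq M \setminus H$, and $C \cup D$ is itself closed under $m$: among any three points two lie in the same piece, say $a,b \in C$, and then $m(a,b,c) \in [a,b] \subseteq C$ by convexity; similarly for $D$. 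Since $C \cup D$ contains $F$, it equals $M$, and as $C$ and $D$ are disjoint we get $H = H \cap M = C$. Consequently $M$ has at most $2^{|F|}$ halfspaces, and the separating family embeds $M$ (as a set) into a product of at most $2^{|F|}$ copies of $\{0,1\}$, so $|M| \leq 2^{2^{|F|}} < \infty$. Alternatively one may simply quote the finiteness of the free median algebra on finitely many generators.

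It then remains to assemble the filtration. The finite subsets of $X$, ordered by inclusion, form a directed set; $F \mapsto \langle F \rangle$ is monotone; and $\bigcup_F \langle F \rangle \supseteq \bigcup_{x \in X} \{x\} = X$. Hence $(\langle F \rangle)_F$ is a filtration of $X$, and by the two observations above each $\langle F \rangle$ is a finite median subspace, which is exactly what is claimed.

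The only real obstacle is the finiteness of $\langle F \rangle$; everything else is routine bookkeeping, and the sole external input is the existence of a separating family of halfspaces in an arbitrary median algebra (or, in the alternative phrasing, the classical finiteness of the free median algebra on finitely many generators).
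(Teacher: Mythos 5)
Your proof is correct and follows essentially the same route as the paper: filter $X$ by the median hulls (median subalgebras generated by) its finite subsets, the only substantive point being that these hulls are finite. Where the paper simply cites this finiteness from the literature, you supply a proof via the halfspace-counting argument (every halfspace of $\langle F\rangle$ is the convex hull of its trace on $F$, giving at most $2^{|F|}$ halfspaces and hence $|\langle F\rangle|\leq 2^{2^{|F|}}$), which is a correct and standard self-contained substitute for that citation.
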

    \begin{proof}
        It can be seen that any finite subset of a median space has a finite so-called \emph{median hull}, that is, a smallest median space containing the set in question (see~\cite[p.7]{BC}).
        Taking all such finite median hulls yields a desired filtration.
    \end{proof}

    It is now easy to conclude that
    \begin{proposition}
        Median spaces are diagonal.
    \end{proposition}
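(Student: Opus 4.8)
The strategy is to assemble the conclusion from three facts already established: (i) finite median spaces are betweenness-isomorphic to finite median graphs (\Cref{proposition:finite_median_spaces_are_almost_graphs}); (ii) median graphs are diagonal (\Cref{proposition:median_graphs_are_diagonal}), together with the fact that diagonality in the sense of Hepworth--Willerton coincides with the metric notion of diagonality for graphs; and (iii) diagonality transfers along betweenness isomorphisms (\Cref{proposition:diagonality_preserved_by_betweenness}) and is preserved under filtrations (\Cref{proposition:diagonality_preserved_by_filtrations}).

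First I would handle the finite case. Let $X$ be a finite median space. By \Cref{proposition:finite_median_spaces_are_almost_graphs} there is a finite graph $G(X)$ and a betweenness isomorphism $\phi : X \to G(X)$. Since $\phi$ is a betweenness isomorphism and betweenness isomorphisms preserve the median property (the median $m(x,y,z)$ is characterised purely in terms of the intervals $[x,y],[y,z],[x,z]$), $G(X)$ is a finite median graph, hence diagonal by \Cref{proposition:median_graphs_are_diagonal}, and hence diagonal in the metric sense by the proposition equating the two notions for graphs. Now \Cref{proposition:diagonality_preserved_by_betweenness} (applied with the roles reversed, using that $\phi^{-1}$ is also a betweenness isomorphism) gives that $X$ is diagonal.

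Then I would pass to the general case via the filtration. Let $X$ be an arbitrary median space. By the preceding proposition, $X$ admits a filtration $(U_\alpha)_\alpha$ by finite median subspaces. By the finite case just treated, each $U_\alpha$ is diagonal. By \Cref{proposition:diagonality_preserved_by_filtrations}, $X$ is diagonal. (One should note that the subspaces $U_\alpha$ are median \emph{as metric spaces in their own right} — that is precisely what the median-hull construction provides — and that is all that is needed; we do not need $U_\alpha$ to be convex in $X$.)

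The only step requiring any genuine thought is the first one: checking that a betweenness isomorphism carries the median property from $X$ to $G(X)$. This is essentially immediate once one observes that $m(x,y,z)$ is defined via intervals and that $\phi$ induces bijections $[x,y] \leftrightarrow [\phi x, \phi y]$ on intervals (it maps $[x,y]$ into $[\phi x, \phi y]$ by preservation, and surjectively by reflection together with surjectivity of $\phi$), so the intersection $[x,y]\cap[y,z]\cap[x,z]$ being a singleton is transported faithfully. Everything else is a direct appeal to the cited stability results, so there is no real obstacle — the substance of the argument lives in \Cref{proposition:median_graphs_are_diagonal} and \Cref{proposition:finite_median_spaces_are_almost_graphs}, both of which we may assume.
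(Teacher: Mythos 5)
Your proof is correct and follows essentially the same route as the paper: reduce to the finite case via Avann's betweenness isomorphism with a (median) graph and \Cref{proposition:diagonality_preserved_by_betweenness}, then pass to the general case using the filtration by finite median hulls and \Cref{proposition:diagonality_preserved_by_filtrations}. Your explicit check that the betweenness isomorphism transports medianness to $G(X)$ is a detail the paper leaves implicit, but it is the same argument.
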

    \begin{proof}
        Finite median spaces are diagonal since so are (finite) median graphs, and by applying~\Cref{proposition:finite_median_spaces_are_almost_graphs,proposition:diagonality_preserved_by_betweenness}. 
    \end{proof}

    \begin{corollary}
        Median Menger convex spaces have vanishing homology (except possibly at $k=0$).
    \end{corollary}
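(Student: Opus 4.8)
The plan is to simply combine the two immediately preceding results. A median Menger convex space is in particular a median space, so by the proposition that median spaces are diagonal, it is diagonal. On the other hand, we have already recorded the proposition that a diagonal Menger convex space has vanishing magnitude homology (except possibly at $k=0$), with the one-line justification that a Menger convex space contains no saturated \ppath{}, so the span $S_k(X)$ of saturated \ppath{}s is trivial for $k \geq 1$, forcing $\MH_k(X) = 0$ there by diagonality.

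So concretely, first I would invoke \Cref{proposition:diagonality_preserved_by_betweenness} together with \Cref{proposition:finite_median_spaces_are_almost_graphs} and the filtration argument already used — but in fact this is exactly the content of the proposition ``Median spaces are diagonal'', so I would just cite that. Then I would apply the earlier proposition on diagonal Menger convex spaces to conclude. There is no real obstacle here: everything has been set up, and the corollary is a one-step composition of two black boxes. The only thing to be careful about is that the two hypotheses (median, Menger convex) are exactly what is needed to feed each of the two invoked results, which is immediate.

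\begin{proof}
    A median Menger convex space is a median space, hence diagonal by the preceding proposition. Being moreover Menger convex, it has vanishing homology (except possibly at $k=0$) by the proposition on diagonal Menger convex spaces.
\end{proof}
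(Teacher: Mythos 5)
Your proposal is correct and matches the paper's intent exactly: the corollary is left without proof precisely because it is the immediate composition of the proposition that median spaces are diagonal with the earlier proposition that diagonal Menger convex spaces have vanishing homology (except possibly at $k=0$). Nothing further is needed.
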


    \section{Betweenness}\label{section:betweenness}

    In this section, the notion of magnitude for a kind of “axiomatic betweenness spaces” is developed.
    The first step is to define a  notion of “betweenness space”, which we do by isolating the key properties of betweenness in metric spaces needed in the construction of the magnitude complex.
    It will then be easy to see that the magnitude complex of a metric space is a special case of the magnitude complex of its underlying betweenness space.
    The harder (and less satisfying) part is functoriality: a naïve definition of morphisms for betweenness spaces will not make $1$-Lipschitz maps a special case of these naïve morphisms (in the same sense that metric spaces can be seen as special cases of betweenness spaces), and hence, the resulting category will be of no help as a generalization of metric spaces in the context of magnitude homology.
    The proposed solution is to relax the definition of morphisms, thus getting a new intermediary category through which the original magnitude complex functor will now indeed factor.

    \subsection{Magnitude of Metric Spaces, Recast}

    Let $\Met$ denote the category of metric spaces and $1$-Lipschitz maps, $\ptsSet$ of pointed simplicial sets, $\ChZ$ of chain complexes over $\ZZ $, and $(\ChZ\leq \ChZ)$ of “chain complexes and subcomplexes” over $\ZZ $.
    The magnitude complex functor $\MC_*$ can be decomposed into the composite:
    \[\begin{tikzcd}
        \Met \ar[r, "\MS"] & \ptsSet \ar[r] & (\ChZ \leq  \ChZ) \ar[r,"\text{quotient}"] & \ChZ
    \end{tikzcd}\]
    where the second arrow sends a pointed simplicial set to the usual chain complex freely spanned by simplices and its subcomplex spanned by basepoints and degenerate simplices, and the last arrow takes the quotient of such a pair.
    Strictly speaking, $\MS$ has not yet been defined on $1$-Lipschitz maps:
    If $f:X\rightarrow Y$ is $1$-Lipschitz, then
    \begin{alignat*}{10}
        \MS(f) : \MS(X) &\rightarrow  \MS(Y)\\
                 \xx &\mapsto  \begin{cases}
                 f\xx & \text{ if  $l(f\xx) = l(\xx)$;} \\
                    \pt  & \text {otherwise.}
                 \end{cases}
    \end{alignat*}
    It is easily checked that this description is equivalent to the usual definition of the magnitude chain complex functor.

    From now on, we will forget about everything but the first arrow $\MS$, since this is where the bulk of “magnitude” happens.

    \subsection{Betweenness Space}

    We can now define betweenness spaces.
    Different notions of betweenness spaces have already been studied at length (see e.g.~\cite{Bowditch,Coppel}) but no comparison will be made here, since our purpose is specific to magnitude.

    \begin{definition}[Betweenness space]
        A \emph{betweenness space} is a set $X$  endowed with a map $[\bullet ,\bullet ]:X\times X \rightarrow  \mathcal{P}(X)$ satisfying the following axioms:
        \begin{enumerate}[label=B\arabic*,ref=B\arabic*]
            \item\label{axiom:endpoints_in_interval}
                $x,z \in  [x,z]$;
            \item\label{axiom:subintervals}
                $y \in  [x,z]$ implies $[x,y] \subseteq  [x,z]$ and $[y,z] \subseteq [x,z]$;
            \item\label{axiom:directionality}
                $w,y \in  [x,z]$ implies: $w \in  [x,y]$ iff $y \in  [w,z]$.
        \end{enumerate}
    \end{definition}

    Naturally, we expect metric spaces with their interval structure to be betweenness spaces.
    \begin{example}[Metric spaces]
        For a metric space $X$, and $x,z\in X$, we let $[x,z]$ be the set of points between $x$ and $z$ (realising the triangle inequality).
        It is easy to see that~\ref{axiom:endpoints_in_interval} is verified.
        For \ref{axiom:subintervals}, fix $y\in [x,z]$ and $w\in [x,y]$, so that 
        \begin{alignat}{10}
            |xy| + |yz| &= |xz|, \label{sub_xyz}\\
            |xw|+|wy|  &=|xy|. \label{sub_xwy}
        \end{alignat}
        Then:
        \[
            |xw| + |wz| \leq  |xw| + |wy| + |yz| \ovs{\cref{sub_xwy}}{=} |xy|+ |yz| \ovs{\cref{sub_xyz}}= |xz|,
        \]
        hence $w\in [x,z]$, and ~\ref{axiom:subintervals} holds.
        For ~\ref{axiom:directionality}, fix $w,y\in [x,z]$.
        By symmetry, it is enough to check that $w\in [x,y]$ implies $y\in [w,z]$.
        From
        \begin{alignat}{10}
            |xw| + |wz| = |xz|, \label{dir_xwz}\\
            |xy| + |yz| = |xz|, \label{dir_xyz}\\
            |xw| + |wy| = |xy|, \label{dir_xwy}
        \end{alignat}
        we get
        \begin{alignat*}{10}
            |wy| + |yz| \ovs{\text{\cref{dir_xwy} + \cref{dir_xyz}}}{=} |xy| - |xw| + |xz| - |xy| = |xz|-|xw| \ovs{\text{\cref{dir_xwz}}}{=} |wz|,
        \end{alignat*}
        which shows $y\in [wz]$.

    \end{example}
    
    Let us write $UX$ for the betweenness space associated to the metric space $X$.

    \begin{example}[“Complete spaces”]
        Given any set $X$, one can set $[x,y] := X$, which obviously satisfies the axioms. 
    \end{example}

    \begin{example}[“Incomplete spaces”]
        Inversely to the “complete space” on $X$, one can set $[x,z] := \{x,z\}$, and the axioms are verified by a simple case check.
    \end{example}
  	
  	\begin{example}[Posets]
  		For a poset $(\mathcal{P},\le)$, and $x,z\in \mathcal{P}$, we let $[x,z]$ be the set $\{ y \;\vert\; x\le y \text{ and } y\le z\} \cup \{ x,z \}$. The first axiom of betweenness follows from the definition of the intervals. The other two are a consequence of transitivity of $\le$.
  	\end{example}
  
    As explained above, to define the magnitude homology of a betweenness space, we focus only on the first step, which is defining a corresponding pointed simplicial set:

    \begin{definition}[Magnitude simplicial set]\label{definition:bet_magnitude_sset}
        If $X$ is a betweenness space, we define the pointed simplicial set $\MS(X)$ as having $k$-simplices the $(k+1)$-tuples of points $\langle x_0,\dots ,x_{k}\rangle :[k]\rightarrow X$ in $X$, plus basepoint simplices $\pt_n$, along with face and degeneracy maps defined by:
        \begin{alignat*}{10}
            \ddd_{k,i} \langle x_0,\dots ,x_i,\dots ,x_k\rangle  &:= \begin{cases}
                \langle x_0,\dots ,\widehat{x_i},\dots ,x_k\rangle  &\text{if $x_i \in  [x_{i-1},x_{i+1}]$ }\\
                \pt_{k-1}                 &\text{otherwise,}
            \end{cases}
            \intertext{and}
                \sss_{k,i} \langle x_0,\dots ,x_i,\dots ,x_k\rangle  &:= \langle x_0,\dots ,x_i,x_i,\dots ,x_k\rangle ,\\
            \intertext{and on basepoints:}
                \ddd_{k,i} \pt_k &:= \pt_{k-1}\\
                \sss_{k,i} \pt_k &:= \pt_{k+1}.
        \end{alignat*}
    \end{definition}
   
    The simplicial identities are all easily verified (using~\ref{axiom:endpoints_in_interval} for the ones involving both $\ddd$ and $\sss$), with the exception of the one involving $\ddd$ only:

    \begin{lemma}
        For all $i<j<k$, $\ddd_{k-1,i}\ddd_{k,j} = \ddd_{k-1,j-1}\ddd_{k,i}$.
    \end{lemma}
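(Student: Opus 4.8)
The plan is a case analysis on the input simplex. If the input is a basepoint both composites return $\pt_{k-2}$, so assume the input is a genuine $k$-simplex $\langle x_0,\dots,x_k\rangle$. The governing dichotomy is whether $j>i+1$ or $j=i+1$; only the latter uses anything beyond bookkeeping of index shifts.

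Suppose first $j>i+1$. Then the two potential deletions touch disjoint blocks of consecutive indices: deleting $x_j$ leaves the triple $(x_{i-1},x_i,x_{i+1})$ intact, and deleting $x_i$ merely shifts the triple $(x_{j-1},x_j,x_{j+1})$ down to positions $(j-2,j-1,j)$ without altering its entries. Hence both $\ddd_{k-1,i}\ddd_{k,j}$ and $\ddd_{k-1,j-1}\ddd_{k,i}$ send $\langle x_0,\dots,x_k\rangle$ to $\langle x_0,\dots,\widehat{x_i},\dots,\widehat{x_j},\dots,x_k\rangle$ when both $x_i\in[x_{i-1},x_{i+1}]$ and $x_j\in[x_{j-1},x_{j+1}]$ hold, and to $\pt_{k-2}$ otherwise; so they agree. (The edge case $i=0$, where $\ddd_{k,0}$ deletes $x_0$ unconditionally, is handled the same way and is strictly easier.)

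The substantive case is $j=i+1$. Write $a=x_{i-1}$, $b=x_i$, $c=x_{i+1}$, $d=x_{i+2}$. Tracking how indices move under a deletion, one checks that $\ddd_{k-1,i}\ddd_{k,i+1}\langle x_0,\dots,x_k\rangle$ equals $\langle\dots,a,d,\dots\rangle$ exactly when $c\in[b,d]$ and $b\in[a,d]$ (and is $\pt_{k-2}$ otherwise), while $\ddd_{k-1,i}\ddd_{k,i}\langle x_0,\dots,x_k\rangle$ equals $\langle\dots,a,d,\dots\rangle$ exactly when $b\in[a,c]$ and $c\in[a,d]$. So the lemma reduces to the four-point equivalence
\[
    \bigl(c\in[b,d]\ \text{and}\ b\in[a,d]\bigr)\iff\bigl(b\in[a,c]\ \text{and}\ c\in[a,d]\bigr).
\]
For the forward direction, $b\in[a,d]$ and axiom~\ref{axiom:subintervals} give $[b,d]\subseteq[a,d]$, hence $c\in[a,d]$; then $b,c\in[a,d]$ and axiom~\ref{axiom:directionality} (with endpoints $a,d$) convert $c\in[b,d]$ into $b\in[a,c]$. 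The reverse direction is symmetric: $c\in[a,d]$ and axiom~\ref{axiom:subintervals} give $[a,c]\subseteq[a,d]$, hence $b\in[a,d]$, and then axiom~\ref{axiom:directionality} converts $b\in[a,c]$ into $c\in[b,d]$.

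I expect no genuine obstacle here: the only non-formal point is isolating this particular four-point identity and recognising it as exactly what axioms \ref{axiom:subintervals} and \ref{axiom:directionality} provide. The rest is careful bookkeeping about which vertex occupies which index after a face map has been applied, which is the usual source of sign/index slips in such simplicial-identity checks.
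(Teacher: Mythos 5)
Your proof is correct and follows essentially the same route as the paper's: dispose of $j>i+1$ by disjointness of the affected index blocks, and reduce $j=i+1$ to the four-point equivalence $(c\in[b,d]\wedge b\in[a,d])\iff(b\in[a,c]\wedge c\in[a,d])$, settled by \ref{axiom:subintervals} followed by \ref{axiom:directionality}. If anything your version is slightly cleaner, since you prove both directions of the equivalence explicitly where the paper argues by contradiction and waves at a "symmetry" between the two sides.
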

    \begin{proof}
        If $j>i+1$, the indices are “far enough” that the different $\ddd_{\bullet ,\bullet }$s commute; the case $j=i+1$ remains.
        For simplicity, let us consider a $3$-\ppath{} $\pp =(x,y,w,z)$ with $i=1,j=2$ (the general case is similar).
        If $\ddd_1\ddd_2\pp \neq  \ddd_2\ddd_1\pp$, necessarily one side is $\pt$, and not the other.
        By symmetry, we may assume $\ddd_1\ddd_2\pp=\pt$ and $\ddd_1\ddd_1\pp\neq \pt$.
        This means:
        \begin{enumerate}
            \item 
                $w\in [y,z]$ and $y\in [x,z]$ (the $\ddd_1\ddd_1\pp\neq \pt$ part).
            \item 
                Either $y\notin [x,w]$, or $y\in [x,w] \wedge  w \notin  [x,z]$ (the $\ddd_1\ddd_2\pp=\pt$ part).
        \end{enumerate}
        We show that both are impossible:

        From the first point and~\ref{axiom:subintervals}, $w \in  [x,z]$.
        From ~\ref{axiom:directionality}, $w\in [y,z]$ implies $y\in [x,w]$, so that the first part of the second point is impossible, i.e. $y\in [x,w]$.
        But then, the second part is also known to be impossible, since $w\in [x,z]$.
    \end{proof}

    \begin{example}[Metric spaces]\label{example:MS_X_eq_MS_UX}
        Letting $UX$ denote the betweenness space associated to a metric space $X$, it is direct to see that $\MS(UX) = \MS(X)$.
    \end{example}

    \subsection{Morphisms and Functoriality}

    As we know, $\MS$ is a functor from $\Met$ to $\ptsSet$.
    We have also seen that any metric space defines in particular a betweenness space, through the “sort of forgetful” map $U$, and, as seen in~\Cref{example:MS_X_eq_MS_UX}, $\MS(X)$ factors through $U$.
    We would like to make this factoring into one of functors, for which we first need to define morphisms on betweenness spaces.

    An (unsatisfying) first guess is the following:
    \begin{definition}[Betweenness morphisms]
        A \emph{betweenness morphism} is a map $f:X\rightarrow Y$ between betweenness spaces $X$ and $Y$ that:
        \begin{itemize}
            \item 
                preserves betweenness ($x\in [x',x'']$ implies $fx\in [fx',fx'']$); and
            \item
                reflects betweenness ($fx\in [fx',fx'']$ implies $x\in [x',x'']$).
        \end{itemize}

        The category $\Bet$ has objects betweenness spaces and morphisms betweenness morphisms.
    \end{definition}
    It is easily verified that $\Bet$ indeed forms a category, and for a betweenness morphisms $f:X\rightarrow Y$, letting\begin{alignat*}{10}
        \MS(f): \MS(X) &\rightarrow  \MS(Y)\\
        \langle x_1,\dots ,x_n\rangle  &\mapsto  \langle fx_1,\dots , fx_n\rangle \\
        \pt &\mapsto  \pt.
    \end{alignat*}
    will quite clearly yield a functor $\MS:\Bet \rightarrow  \ptsSet$.
  
    But there is a catch: $1$-Lipschitz maps are not required to either preserve or reflect betweenness.
    Hence, the “sort of forgetful” map $U$ does not work on morphisms.
    
    Thus, we must find an appropriately looser category, $\Bet_-$, for which we do get a functor.
    The picture we will reach is the following:

    \[\begin{tikzcd}
        \Met \ar[rrd,"U"] \arrow[sloped,above,dash,red,dashed]{dd}{\text{not friends}} \ar[bend left, rrrrd, "\MS"] & &        & &    \\
                                                                                                                        & & \Bet_- \ar[rr, "\MS"] & & \ptsSet  \\
        \Bet \ar[rru] \ar[bend right, rrrru, "\MS"] & &        & & 
    \end{tikzcd}\]

    In a sense, $\Bet_-$ will serve as an intermediary category — one foot in betweenness spaces, the other in pointed simplicial sets — whose morphisms will encompass the good behaviour of $1$-Lipschitz maps and betweenness morphisms.

    \begin{definition}[Good \ppath{}s]
        Let $f:X\rightarrow Y$ be a map between betweenness spaces $X$ and $Y$.
        An \emph{$f$-good} set of \ppath{}s in $X$ is a set $G$ of (non basepoint) simplices of $\MS(X)$ satisfying:
        \begin{enumerate}[label=G\arabic*,ref=G\arabic*]
            \item\label{axiom:good}
                If $\xx = \langle x_0,\dots ,x_k\rangle \in G$ then: $fx_i \in  [fx_{i-1},fx_{i+1}]$ iff $x_i \in [x_{i-1},x_{i+1}]$ and $\ddd_i x\in G$.
            \item\label{axiom:bad}
                If $\xx =\langle x_0,\dots ,x_k\rangle  \notin  G$  and $x_i \in [x_{i-1},x_{i+1}]$, then $\ddd_i x \notin  G$.
        \end{enumerate}
        We unsurprisingly call a \ppath{} \emph{good} if it lies in $G$ (given by the context), and bad otherwise.
    \end{definition}
    If we view basepoints as bad paths,~\ref{axiom:bad} simply states that $\MS(X)-G$ is closed under $\ddd$.

    We can now define our category: 
    \begin{definition}[$\Bet_-$]
        The category $\Bet_-$ has as objects betweenness spaces, and as morphisms from $X$ to $Y$ pairs $(f:X\rightarrow Y,G_f)$ where $f$ is a map (of sets) from $X$ to $Y$ and $G_f$ is an $f$-good set of paths.
        The identity morphism on $X$ has $G_{\Id} = \MS(X)-\{\pt_n\}$, and if $(f:X\rightarrow Y,G_f), (g:Y\rightarrow Z,G_g)$ are two morphisms, their composite is $(g\circ f,G_f\cap f^{-1}[G_g])$.
    \end{definition}

    This definition is rather ad-hoc; its purpose is to capture the functoriality of $\MS$ originally restricted to $1$-Lipschitz maps, but in a now broader context.
    
    It is not obvious a priori that the composite of two morphisms is again a morphism, that is, that $G_f\cap f^{-1}[G_g]$ satisfies the requirements for a $g\circ f$-good set.

    \begin{lemma}[Good sets composition]
        If $(f:X\rightarrow Y,G_f), (g:Y\rightarrow Z,G_g)$ are two morphisms, then $G_f \cap  f^{-1}[G_g]$ is a $g\circ f$-good set.
    \end{lemma}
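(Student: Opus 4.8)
The plan is to verify the two defining properties \ref{axiom:good} and \ref{axiom:bad} for the set $H := G_f \cap f^{-1}[G_g]$ with respect to the map $g\circ f$. Throughout, write $h = g\circ f$ and fix a \ppath{} $\xx = \langle x_0,\dots,x_k\rangle$ and an index $i$. The key observation, used repeatedly, is that ``$\ddd_i \xx \in H$'' unfolds to the conjunction ``$\ddd_i\xx \in G_f$ and $f(\ddd_i\xx) = \ddd_i(f\xx) \in G_g$'' — here the equality $f(\ddd_i\xx) = \ddd_i(f\xx)$ requires that $x_i \in [x_{i-1},x_{i+1}]$, so one must be a little careful about which case one is in; when $x_i \notin [x_{i-1},x_{i+1}]$ the face $\ddd_i\xx$ is a basepoint and automatically not in $H$.

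First I would check \ref{axiom:good}. Suppose $\xx \in H$, so $\xx \in G_f$ and $f\xx \in G_g$. Applying \ref{axiom:good} for $f$ at $\xx$: $fx_i \in [fx_{i-1},fx_{i+1}]$ iff $x_i \in [x_{i-1},x_{i+1}]$ and $\ddd_i\xx \in G_f$. Applying \ref{axiom:good} for $g$ at $f\xx = \langle fx_0,\dots,fx_k\rangle$: $g f x_i \in [gfx_{i-1},gfx_{i+1}]$ iff $fx_i \in [fx_{i-1},fx_{i+1}]$ and $\ddd_i(f\xx) \in G_g$. Chaining these two equivalences gives: $h x_i \in [hx_{i-1},hx_{i+1}]$ iff ($x_i \in [x_{i-1},x_{i+1}]$ and $\ddd_i\xx \in G_f$ and $\ddd_i(f\xx) \in G_g$). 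It remains to see that the right-hand side equals ``$x_i \in [x_{i-1},x_{i+1}]$ and $\ddd_i\xx \in H$''. The nontrivial direction is that when $x_i \in [x_{i-1},x_{i+1}]$ we have $\ddd_i(f\xx) = f(\ddd_i\xx)$, so ``$\ddd_i\xx \in G_f$ and $\ddd_i(f\xx) \in G_g$'' is exactly ``$\ddd_i\xx \in G_f \cap f^{-1}[G_g] = H$''; and the clause $x_i \in [x_{i-1},x_{i+1}]$ is already being conjoined, so there is no loss. This yields \ref{axiom:good} for $H$.

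Next, \ref{axiom:bad}: suppose $\xx \notin H$ and $x_i \in [x_{i-1},x_{i+1}]$; we must show $\ddd_i\xx \notin H$. Since $\xx \notin H$, either $\xx \notin G_f$, or $\xx \in G_f$ but $f\xx \notin G_g$. In the first case, \ref{axiom:bad} for $f$ gives $\ddd_i\xx \notin G_f$, hence $\ddd_i\xx \notin H$. In the second case, we additionally know $\xx \in G_f$, so \ref{axiom:good} for $f$ at $\xx$ tells us that $fx_i \in [fx_{i-1},fx_{i+1}]$ (because $x_i\in[x_{i-1},x_{i+1}]$ and — if $\ddd_i\xx\notin G_f$ we are already done by the first case, so we may assume $\ddd_i\xx \in G_f$, giving the ``iff'' its other half). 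Thus $fx_i \in [fx_{i-1},fx_{i+1}]$, and now \ref{axiom:bad} for $g$ applied to $f\xx \notin G_g$ at index $i$ gives $\ddd_i(f\xx) \notin G_g$; since $\ddd_i(f\xx) = f(\ddd_i\xx)$ (as $x_i$, hence $fx_i$, realises betweenness), we conclude $\ddd_i\xx \notin f^{-1}[G_g]$, so again $\ddd_i\xx \notin H$.

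The main obstacle — really the only subtlety — is the careful bookkeeping in \ref{axiom:bad} of the case split on whether $\ddd_i\xx \in G_f$: one wants to invoke the ``iff'' in \ref{axiom:good} for $f$ to transport $x_i \in [x_{i-1},x_{i+1}]$ to $fx_i \in [fx_{i-1},fx_{i+1}]$, but that implication only holds once $\ddd_i\xx \in G_f$ is known, so the argument must first dispatch the $\ddd_i\xx \notin G_f$ possibility (trivially, since then $\ddd_i\xx \notin H$). Everything else is the routine unwinding of the definition of $H$ and the identity $f \circ \ddd_i = \ddd_i \circ f$ on simplices whose $i$-th vertex realises betweenness.
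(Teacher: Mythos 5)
Your proof is correct and follows essentially the same route as the paper's: verify \ref{axiom:good} by chaining the two ``iff''s for $G_f$ and $G_g$, and verify \ref{axiom:bad} by splitting into the cases $\xx\notin G_f$ and $\xx\in G_f,\ f\xx\notin G_g$. In fact you are slightly more careful than the paper in the second case of \ref{axiom:bad}, where one must first dispatch the possibility $\ddd_i\xx\notin G_f$ before using \ref{axiom:good} for $f$ to transport $x_i\in[x_{i-1},x_{i+1}]$ to $fx_i\in[fx_{i-1},fx_{i+1}]$ — a step the paper's proof glosses over.
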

    \begin{proof}
        Fix $\xx$ a \ppath{} and let $\yy := f(\xx)$.

        Assume first that $\xx \in  G_f \cap  f^{-1}[G_g]$, i.e. $\xx \in  G_f, \yy \in  G_g$.
        Then:
        \begin{alignat*}{10}
                            & gf\xx_i \in  [gf\xx_{i-1},gf\xx_{i+1}] &\\
            \quad \Leftrightarrow  \quad   & g\yy_i \in  [g\yy_{i-1},g\yy_{i+1}]    &\qquad \text{by definition of $\yy$,}\\
            \quad \Leftrightarrow  \quad   & \yy_i \in  [\yy_{i-1},\yy_{i+1}] \text{ and } \ddd_i \yy \in  G_g   &\qquad \text{ since $\yy \in  G_g$ and by~\ref{axiom:good} for $G_g$,}\\
            \quad \Leftrightarrow  \quad   & f\xx_i \in  [f\xx_{i-1},f\xx_{i+1}] \text{ and } \ddd_i \yy \in  G_g   &\qquad \text{ by definition of $\yy$,}\\
            \quad \Leftrightarrow  \quad   & \xx_i \in  [\xx_{i-1},\xx_{i+1}] \text{ and } \ddd_i\xx \in  G_f \text{ and } \ddd_i \yy \in  G_g   &\qquad \text{ since $\xx \in  G_f$ and by~\ref{axiom:good} for $G_f$,}\\
            \quad \Leftrightarrow  \quad   & \xx_i \in  [\xx_{i-1},\xx_{i+1}] \text{ and } \ddd_i\xx \in  G_f\cap f^{-1}[G_g]   &
        \end{alignat*}
        which verifies~\ref{axiom:good} for $G_f \cap  f^{-1}[G_g]$.

        Assume now that $\xx \notin  G_f \cap  f^{-1}[G_g]$ and that $x_i \in  [x_{i-1},x_{i+1}]$.

        If $\xx \notin  G_f$, then $\ddd_i \xx \notin  G_f$ (by~\ref{axiom:bad} for $G_f$), and hence $\ddd_i\xx\notin  G_f\cap f^{-1}[G_g]$.
        If $\xx \in  G_f$ but $\yy \notin  G_g$, then (since $\xx\in G_f$ and by~\ref{axiom:good} for $G_f$) $y_i \in  [y_{i-1},y_{i+1}]$
        Therefore (since $\yy \notin  G_g$ and by~\ref{axiom:bad} for $G_g$) $\ddd_i \yy \notin  G_g$, hence $\ddd_i \xx \notin  G_f\cap f^{-1}[G_g]$, which verifies ~\ref{axiom:bad} for $G_f \cap  f^{-1}[G_g]$.
        
    \end{proof}
    
    Thus, $\Bet_-$ really is a category, and we can associate to each of its morphism a simplicial map: 

    \begin{definition}[$\MS$ on morphisms]
        If $(f:X\rightarrow Y,G_f)$ is a morphism in $\Bet_-$, we define:
        \begin{alignat*}{10}
            \MS(f): \MS(X) &\rightarrow  \MS(Y)\\
            \xx &\mapsto  \begin{cases}
                f(x) & \text{ if $\xx \in  G_f$}\\
                \pt    & \text{ otherwise.}
            \end{cases}
        \end{alignat*}
    \end{definition}
    
    \begin{lemma}[$\MS(f)$ is a pointed simplicial map]
        If $(f:X\rightarrow Y,G_f)$ is a morphism in $\Bet_-$, then $\MS(f)$ is a morphism in $\ptsSet$, i.e. preserves basepoints and commutes with $\ddd$ and $\sss$. 
    \end{lemma}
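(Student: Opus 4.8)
We plan to check the three defining properties of a pointed simplicial map in turn: preservation of basepoints, commutation with the degeneracies $\sss_{k,i}$, and commutation with the faces $\ddd_{k,i}$; throughout, the argument is a three-way case split according to whether the simplex at hand is a basepoint, lies in $G_f$, or lies outside $G_f$. The only genuine inputs are the good-set axioms \ref{axiom:good} and \ref{axiom:bad}, the betweenness axiom \ref{axiom:endpoints_in_interval}, and the simplicial identity $\ddd_i\sss_i=\Id$; everything else is bookkeeping. Basepoint preservation is built into the definition of $\MS(f)$, and for $\xx=\pt_k$ all of $\MS(f)$, $\ddd_{k,i}$ and $\sss_{k,i}$ act on basepoints in the evident compatible way, so from now on we fix $\xx=\langle x_0,\dots,x_k\rangle$ a non-basepoint simplex.

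For the face maps, suppose first $\xx\in G_f$, so $\MS(f)\xx=f(\xx)=\langle fx_0,\dots,fx_k\rangle$; we compare $\ddd_{k,i}f(\xx)$ with $\MS(f)\ddd_{k,i}\xx$ by splitting on whether $fx_i\in[fx_{i-1},fx_{i+1}]$. Axiom \ref{axiom:good} says this holds exactly when $x_i\in[x_{i-1},x_{i+1}]$ and $\ddd_i\xx\in G_f$, which is precisely what makes the two sides agree: when it holds, both equal $f(\ddd_i\xx)$; when it fails, the contrapositive of \ref{axiom:good} gives that either $x_i$ is not between its neighbours (so $\ddd_{k,i}\xx$ is already a basepoint) or $\ddd_i\xx\notin G_f$, and in both cases $\MS(f)\ddd_{k,i}\xx=\pt_{k-1}=\ddd_{k,i}f(\xx)$. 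If instead $\xx\notin G_f$, then $\MS(f)\xx=\pt_k$ and $\ddd_{k,i}\MS(f)\xx=\pt_{k-1}$, while on the other side either $x_i$ is not between its neighbours, making $\ddd_{k,i}\xx$ a basepoint, or it is, in which case \ref{axiom:bad} forces $\ddd_i\xx\notin G_f$; either way $\MS(f)\ddd_{k,i}\xx=\pt_{k-1}$. The endpoint faces $\ddd_{k,0},\ddd_{k,k}$, which carry no betweenness condition, are handled trivially in the same fashion.

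For the degeneracies, the one point that requires an argument is that $G_f$ is stable under degeneracy: $\xx\in G_f$ if and only if $\sss_{k,i}\xx\in G_f$. Granting this, commutation is immediate, since applying $f$ componentwise commutes with repeating an entry and the remaining bad/basepoint cases match trivially. To see the stability, observe that in $\sss_{k,i}\xx$ the duplicated vertex realises betweenness for free: $(\sss_{k,i}\xx)_i\in[(\sss_{k,i}\xx)_{i-1},(\sss_{k,i}\xx)_{i+1}]$ by \ref{axiom:endpoints_in_interval}. Hence if $\xx\in G_f$ but $\sss_{k,i}\xx\notin G_f$, axiom \ref{axiom:bad} applied at that vertex would give $\ddd_i\sss_{k,i}\xx\notin G_f$; but $\ddd_i\sss_{k,i}\xx=\xx$, a contradiction. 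Conversely, if $\sss_{k,i}\xx\in G_f$, axiom \ref{axiom:good} applied at the same vertex — where both the condition downstairs and its image under $f$ hold automatically — forces $\ddd_i\sss_{k,i}\xx=\xx\in G_f$.

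The main obstacle, indeed the only subtlety, is recognising that the degeneracy case needs stability of $G_f$ under degeneracies, and then noticing that this stability is not an extra requirement but a consequence of \ref{axiom:good} and \ref{axiom:bad} evaluated at the repeated vertex, where \ref{axiom:endpoints_in_interval} supplies the missing betweenness. Everything remaining is the routine case check above.
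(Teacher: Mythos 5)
Your proof is correct, and for the face maps it is the same argument as the paper's: split on whether $\xx\in G_f$ and use \ref{axiom:good}, respectively \ref{axiom:bad}, to match up the two ways each side of the square can become a basepoint (the paper only writes out the $2$-simplex case, but the content is identical). Where you genuinely add something is the degeneracy case, which the paper dismisses as ``clear''. As you correctly observe, commutation with $\sss_{k,i}$ is \emph{not} free: it needs $G_f$ to be stable under degeneracies in both directions (if $\xx\in G_f$ but $\sss_{k,i}\xx\notin G_f$, one side is $f(\sss_{k,i}\xx)$ and the other is $\pt_{k+1}$), and your derivation of this stability from \ref{axiom:good} and \ref{axiom:bad} evaluated at the repeated vertex — with \ref{axiom:endpoints_in_interval} supplying both the betweenness hypothesis and the fact that $\ddd_i\sss_{k,i}\xx=\xx$ rather than a basepoint — is correct and is a worthwhile supplement to the paper. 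The one place where you are as terse as the paper is the extreme faces $\ddd_{k,0}$ and $\ddd_{k,k}$: \Cref{definition:bet_magnitude_sset} only specifies $\ddd_{k,i}$ for $0<i<k$, and under the naive reading ``drop the endpoint unconditionally'' commutation with $\MS(f)$ can actually fail (already for a $1$-Lipschitz map of metric spaces, a path whose first edge is collapsed but whose tail is isometrically preserved is bad while its $\ddd_0$-face is good), while the axioms \ref{axiom:good} and \ref{axiom:bad} say nothing about these faces. So ``handled trivially in the same fashion'' is not quite right; this case depends on a convention for the extreme faces (presumably that they hit the basepoint, consistently with the reduced boundary $\sum_{i=1}^{k-1}$) that the paper leaves implicit and that neither your proof nor the paper's addresses explicitly.
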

    \begin{proof}
        That $\MS(f)$ sends basepoints to basepoints and commute with $\sss$ is clear.
        Let us check that $\ddd_i \circ  \MS(f) = \MS(f) \circ  \ddd_i$.
        For simplicity, consider a simplex $\xx=\langle x_0,x_1,x_2\rangle $ with $i=1$, and let $y_i:=f(x_i)$.
        Checking equality reduces to checking that one side is equal to $\pt$ iff the other is.
        \begin{itemize}
            \item
                If $\xx$ is a bad path, then $\MS(f)(\xx) = \pt$, so $\ddd_1\circ \MS(f)\xx = \pt$.
                If also $x_1\in [x_0,x_2]$, then $\ddd_1 \xx = \langle x_0,x_2\rangle $ is again a bad path by~\ref{axiom:bad}, so that $\MS(f) \circ  \ddd_1 \xx = \pt$.
                Otherwise, $\ddd_1 \xx = \pt$ and, again, $\MS(f) \circ  \ddd_1 \xx =\pt$.
            \item
                If $\xx$ is a good path, then $\MS(f)\xx = \langle y_0,y_1,y_2\rangle  := \yy \neq  \pt$.
                If $y_1\in [y_0,y_2]$, then $x_1\in [x_0,x_2]$ and $\ddd_1\xx$ is a good path by~\ref{axiom:good}.
                Then, $\MS(f)\circ \ddd_1 \xx = \langle y_0,y_2\rangle  = \ddd_1\circ \MS(f) \xx$.
                Otherwise, either $x_1 \notin  [x_0,x_2]$ or $\ddd_1\xx$ is bad by~\ref{axiom:good}, both of which imply that $\MS(f)\circ \ddd_1 \xx = \pt = \ddd_1\circ \MS(f) \xx$.
        \end{itemize}

    \end{proof}

    Let us now check functoriality:
    \begin{lemma}
        $\MS(g\circ f) = \MS(g) \circ  \MS(f)$ and $\MS(\Id) =\Id$.
    \end{lemma}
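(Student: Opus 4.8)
The plan is to check both identities directly at the level of simplices, since a morphism in $\ptsSet$ is nothing but a family of maps on simplices compatible with faces and degeneracies; hence two such morphisms coincide as soon as they agree on every simplex.

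First I would dispose of $\MS(\Id)=\Id$. The identity morphism of $X$ in $\Bet_-$ comes equipped with the good set $G_{\Id}=\MS(X)-\{\pt_n\}$, so every non-basepoint simplex is good and is sent to itself, while basepoints go to basepoints by the definition of $\MS$ on morphisms; hence $\MS(\Id)$ is the identity simplicial map.

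For $\MS(g\circ f)=\MS(g)\circ\MS(f)$, I would fix morphisms $(f,G_f):X\to Y$ and $(g,G_g):Y\to Z$ with composite $(g\circ f,\,G_f\cap f^{-1}[G_g])$, and run a short case analysis on a simplex $\xx$ of $\MS(X)$. Basepoints are sent to basepoints by both sides. For $\xx$ non-basepoint, the decisive remark is that $f(\xx)\in G_g$ holds if and only if $\xx\in f^{-1}[G_g]$, so that ``$\xx$ is good for $g\circ f$'' (i.e.\ $\xx\in G_f\cap f^{-1}[G_g]$) says exactly ``$\xx\in G_f$ and $f(\xx)\in G_g$''. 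Splitting into the three cases $\xx\in G_f\cap f^{-1}[G_g]$; $\xx\in G_f$ but $f(\xx)\notin G_g$; and $\xx\notin G_f$, one checks that in the first case both sides equal $(g\circ f)(\xx)$, and in the remaining two both sides equal $\pt$, using only that $\MS(f)$ sends bad paths to $\pt$ and that $\MS(g)(\pt)=\pt$.

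The statement is pure definition-chasing, so there is no genuine obstacle; the only point demanding a little care is tracking the good-set membership condition through the intermediate space $Y$ — but this is precisely what the composition law $G_f\cap f^{-1}[G_g]$ was tailored to make transparent.
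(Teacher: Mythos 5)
Your argument is correct and matches the paper's own proof: the identity case follows from $G_{\Id}$ containing all non-basepoint simplices, and the composition case reduces to the observation that $\xx\in G_f\cap f^{-1}[G_g]$ iff $\xx\in G_f$ and $f(\xx)\in G_g$, so both sides are $\pt$ under exactly the same conditions and agree otherwise. Nothing further is needed.
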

    \begin{proof}
        Since the identity morphism has all non basepoints simplices as good, it is clear that $\MS(\Id) =\Id$.
        For compositions, fix $\xx$ and let $\yy := f(\xx)$.
        Clearly, it is enough to show that $\MS(g\circ f)(\xx) = \pt$ iff $ \MS(g) \circ  \MS(f)(\xx) = \pt$.
        The former is $\pt$ iff $\xx \notin  G_f \cap  f^{-1}[G_g]$, which is iff $\xx \notin  G_f$ or $\xx \in  G_f$ but $\yy \notin  G_g$, which is exactly the condition for the latter being $\pt$.
    \end{proof}

    We now have three different magnitude functors $\MS$ at hand: one for each of $\Met,\Bet,\Bet_-$.
    It is easy to see that the first two factor through the third on objects, and being able to \emph{choose} good sets is what allows the factoring on morphisms: 

    \begin{example}[Metric spaces]\label{example:good_set_for_Lipschitz}
        If metric spaces $X,Y$ are viewed as betweenness spaces, a $1$-Lipschitz map $f:X\rightarrow Y$ may be endowed with the following set of good \ppath{}s:
        \[
            G_f := \{\xx=(x_0,\dots ,x_k)\ :\ l(f\xx) = l(\xx)\},
        \]
        where $l(\bullet )$ denotes the length of a \ppath{}, as usual.
        Furthermore, if $f:X\rightarrow Y$ and $g:Y\rightarrow Z$ are $1$-Lipschitz, then:
        \[
            G_{g\circ f} = G_{f} \cap  f^{-1}[G_g].
        \]
        Indeed, $g\circ f(\xx)$ has the same length as $\xx$ iff both $f(\xx)$ and $g(f(\xx))$ do, from which the equality is obvious.
    
        We see now that the “sort of forgetful” map $U$ \emph{does} define a functor to $\Bet_-$ when appropriately defined on morphisms.
        Furthermore, the definitions imply that $\MS:\Met \rightarrow  \ptsSet$ now factors through $U$, as hoped.
    \end{example}

    \begin{example}[Betweenness morphisms]
        If $f:X\rightarrow Y$ is a betweenness morphism, then taking $G_f = \MS(X) - \{\pt\}$ is a valid choice of good paths, and, again, we conclude that $\MS:\Bet \rightarrow  \ptsSet$ factors through $\Bet_-$.
    \end{example}

    The following lemma 

    \begin{lemma}
        For any map $f:X\rightarrow Y$ between betweenness spaces, there exists a maximal good set of \ppath{}s.
    \end{lemma}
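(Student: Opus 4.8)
The plan is to produce the maximal good set via Zorn's lemma, applied to the poset of all $f$-good sets of \ppath{}s ordered by inclusion. First I would check that this poset is nonempty by observing that the empty set is $f$-good: condition~\ref{axiom:good} is then vacuous, and~\ref{axiom:bad} holds trivially since its conclusion ``$\ddd_i\xx\notin\emptyset$'' is automatic. The remaining — and only substantive — task is to show that every chain of $f$-good sets has an $f$-good upper bound, for which the obvious candidate is the union of the chain.

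So, given a chain $\{G_\alpha\}_\alpha$ of $f$-good sets, I would set $G:=\bigcup_\alpha G_\alpha$ and verify the two axioms. Axiom~\ref{axiom:bad} is immediate: if $\xx\notin G$ then $\xx\notin G_\alpha$ for all $\alpha$, so (when the relevant betweenness relation holds) $\ddd_i\xx\notin G_\alpha$ for all $\alpha$ by~\ref{axiom:bad} for each $G_\alpha$, hence $\ddd_i\xx\notin G$. For~\ref{axiom:good} I would take $\xx\in G$, pick $\alpha$ with $\xx\in G_\alpha$, and fix an index $i$; the implication ``$fx_i\in[fx_{i-1},fx_{i+1}]$ $\Rightarrow$ $x_i\in[x_{i-1},x_{i+1}]$ and $\ddd_i\xx\in G$'' follows directly from~\ref{axiom:good} for $G_\alpha$ together with $G_\alpha\subseteq G$. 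The converse implication is where I expect the one real subtlety to lie: here $\ddd_i\xx$ may a priori sit in a different member $G_\beta$ of the chain than $\xx$, but since the family is totally ordered, one of $G_\alpha,G_\beta$ contains the other, so both $\xx$ and $\ddd_i\xx$ lie in that larger set, and~\ref{axiom:good} for it (whose right-hand side is now satisfied) yields $fx_i\in[fx_{i-1},fx_{i+1}]$. Hence $G$ is $f$-good and is an upper bound for the chain.

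Concluding, Zorn's lemma then delivers a maximal $f$-good set, as required. I would also point out that the hypothesis that the family is a chain is genuinely needed: the union of two incomparable good sets can fail~\ref{axiom:good}, precisely because $\xx$ lying in one and $\ddd_i\xx$ lying in the other (with $x_i$ between its neighbours) gives no control over whether $fx_i$ lies between $fx_{i-1}$ and $fx_{i+1}$. This is why a naive ``union of all good sets'' construction does not work, and Zorn's lemma — or, equivalently, the observation that good sets are closed under \emph{directed} unions — is the right tool.
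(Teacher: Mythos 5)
Your Zorn's lemma argument is correct as far as it goes and does prove the literal statement: the empty set is $f$-good, the union of a chain of good sets is good (your treatment of the converse direction of~\ref{axiom:good}, using total orderedness to place $\xx$ and $\ddd_i\xx$ in a common member of the chain, is fine), and Zorn then produces a maximal element. The paper, however, takes a shorter and strictly stronger route: it shows that the union of \emph{all} good sets is itself good, so that there is a unique \emph{greatest} good set, not merely a maximal one. The closing remark of your proposal --- that the chain hypothesis is genuinely needed and that the union of two incomparable good sets can fail~\ref{axiom:good} --- is false, and the reason is exactly the observation your argument sidesteps via comparability: if $x_i\in[x_{i-1},x_{i+1}]$ and $\ddd_i\xx\in G_\beta$ for some good set $G_\beta$, then $\xx$ must already lie in $G_\beta$, since otherwise~\ref{axiom:bad} for $G_\beta$ would force $\ddd_i\xx\notin G_\beta$. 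Hence $\xx$ and $\ddd_i\xx$ automatically cohabit in $G_\beta$, and~\ref{axiom:good} for $G_\beta$ yields $fx_i\in[fx_{i-1},fx_{i+1}]$; no comparability within the family is used, so good sets are closed under arbitrary unions and Zorn's lemma is unnecessary.

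This is not only a matter of elegance. The paper immediately afterwards writes $\hG(f)$ for \emph{the} maximal good set and relies on this in the definition of gated decompositions (the condition that $\hG(\pi)$ contains $\MS(Y\cap Z)-\{\pt_n\}$). Your argument produces only \emph{a} maximal element and gives no uniqueness, so it does not by itself license that notation. If you keep the Zorn route, you should append the observation above (or an equivalent one) to upgrade ``maximal'' to ``maximum''.
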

    \begin{proof}
        Let $\mathcal{G}$ be the set of all good sets of \ppath{}s for $f$.
        We show that $\bigcup \mathcal{G}$ is also a good set of \ppath{}s.
        \begin{enumerate}
            \item
                Assume $\xx \in  \bigcup \mathcal{G}$.

                Assume first $fx_i \in  [fx_{i-1},fx_{i+1}]$ and let  $G\in \mathcal{G}$ with $\xx\in  G$.
                Then $x_i \in  [x_{i-1},x_{i+1}]$ and $\ddd_i \xx \in  G$ (since $G$ is a good set), and we conclude $\xx \in  \bigcup \mathcal{G}$.
                
                Conversely, if $x_i \in  [x_{i-1},x_{i+1}]$ and $\ddd_i \bigcup \mathcal{G}$, then $\ddd_i\xx \in  G$ for some good set $G$.
                If $\xx \notin  G$, we would have $\ddd_i \xx \notin  G$, (second rule of good sets).
                We conclude that $\xx \in  G$, hence $\xx \in  \bigcup \mathcal{G}$.
            \item 
                If $\xx \notin  \bigcup \mathcal{G}$, then $\xx$ lies in no good set.
                Assuming $x_i \in  [x_{i-1},x_{i+1}]$, we get, for each good set $G$, that $\ddd_i x \notin  G$.
                Thus, $\ddd_i x \notin  \bigcup \mathcal{G}$. 
        \end{enumerate}
    \end{proof}
    \begin{definition}[Maximal good set]
        If $f:X\rightarrow Y$, write $\hG(f)$ for the maximal good set of \ppath{}s of $f$.
    \end{definition}

    \subsection{Künneth and Mayer-Vietoris Again}
    
    We can now verify that both Künneth and Mayer-Vietoris still make sense for betweenness spaces.

    If $X,Y$ are betweenness spaces, their product is the betweenness space with underlying set the Cartesian product $X\times Y$ and interval structure:
    \[
        [(x_1,y_1),(x_2,y_2)] := [x_1,x_2] \times  [y_1,y_2].
    \]
    A routine verification shows that the axioms (\ref{axiom:endpoints_in_interval},\ref{axiom:subintervals},\ref{axiom:directionality}) are satisfied.

    Note that if $(f:X\rightarrow X',G_f),(g:Y\rightarrow Y',G_g)$ are morphisms, then the set
    \[
        G_{f\times g} := \{\xx\times \yy := \langle (x_0,y_0),\dots ,(x_k,y_k)\rangle \ |\ k\geq 0,\ \xx = \langle x_0,\dots ,x_k\rangle  \in  G_f,\ \yy = \langle y_0,\dots ,y_k\rangle \in G_g\}
    \]
    is a good set for $f\times g:X\times Y\rightarrow X'\times Y'$, so that $(f\times g:X\times Y\rightarrow X'\times Y',G_{f\times g})$ is also a morphism.

    \begin{proposition}[Künneth theorem – betweenness setting]\label{proposition:bet_Kunneth}
        Let $X,Y$ be betweenness spaces, and let $X\times Y$ be their product.
        The cross product map
        \begin{alignat*}{10}
            \square : \MC_*(X) \otimes  \MC_*(Y) &\rightarrow  \MC_*(X\times Y)\\
            \xx\otimes \yy \ &\ovs{\square }{\mapsto } \sum _{\sigma \in \st{n,l}} \sgn(\sigma ) (\xx \ovs{\sigma }{\times } \yy)\\
            \intertext{induces a morphism}
            \MH_*(X)\otimes \MH_*(Y) &\ \ovs{ \square }{\rightarrow }\ \MH_*(X\times Y)\\
                [f]\otimes [g]         &\mapsto  [f \square  g]
        \end{alignat*}
        which fits into a (natural on $\Bet_-$) short exact sequence:
        \[
            0 \rightarrow  \MH_*(X)\otimes \MH_*(Y)\ \ovs{ \square }{\rightarrow }\ \MH_*(X\times Y) \rightarrow  \mathrm{Tor}(\MH_{*-1}(X),\MH_*(Y)) \rightarrow  0.
        \]
    \end{proposition}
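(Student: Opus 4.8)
The plan is to transcribe the proof of \Cref{proposition:Kunneth} essentially verbatim, since everything there rested on two facts that survive the passage to betweenness spaces: only the betweenness relation enters the definitions of $\MS$ and of the cross product, and the product betweenness space carries the product interval structure $[(x_1,y_1),(x_2,y_2)] = [x_1,x_2]\times[y_1,y_2]$, which is precisely the identity that forced the $l^{1}$ metric into the metric argument. So I would reuse the chain of maps
\[
    \MC_*(X)\otimes \MC_*(Y)\ \cong\ \NR_*(\MS(X))\otimes \NR_*(\MS(Y)) \ \ovs{\REZ}{\rightarrow}\ \NR_*(\MS(X)\wedge \MS(Y)) \ \cong\ \NR_*(\MS(X\times Y)) \ \cong\ \MC_*(X\times Y),
\]
identify the composite with $\square$ on generators, and feed the result into the algebraic Künneth formula.

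In order: (1) Check that $\square$ is a chain map. The staircase computation carried out above for metric spaces — splitting $\partial$ into corners (which cancel in pairs under $\partial_m$), flats and walls, and reindexing along the bijections $S_h\cong\st{n-1,l}\times\{1,\dots,n-1\}$, $S_v\cong\st{n,l-1}\times\{1,\dots,l-1\}$ — invokes only betweenness assertions of the shape ``$(\xx\ovs{\sigma}{\times}\yy)_m$ lies between its neighbours'' together with the product interval identity, while the sign bookkeeping is purely combinatorial; it therefore runs unchanged, now citing axioms \ref{axiom:endpoints_in_interval} and \ref{axiom:subintervals} where the metric triangle inequality was used. (2) Identify $\MC_*(X)$ with $\NR_*(\MS(X))$; for a betweenness space this is the definition of $\MC_*$ (via $\Bet_-\to\ptsSet\to(\ChZ\leq\ChZ)\to\ChZ$), and in any case the verbatim analogue of \cite[Lemma 42]{HW} holds, since a simplex of $\MS(X)$ is degenerate iff it has a repeated consecutive entry and $\ddd_{k,i}$ hits the basepoint iff $x_i\notin[x_{i-1},x_{i+1}]$. (3) Establish the isomorphism of pointed simplicial sets $\MS(X)\wedge\MS(Y)\cong\MS(X\times Y)$, $[\xx,\yy]\mapsto\langle\xx,\yy\rangle$: bijectivity and commutation with degeneracies are formal, and commutation with faces is exactly the product interval identity together with the fact that $\ddd_{k,i}\langle\xx,\yy\rangle\neq\pt$ iff both $\ddd_{k,i}\xx\neq\pt$ and $\ddd_{k,i}\yy\neq\pt$. (4) Invoke the reduced Eilenberg--Zilber quasi-isomorphism $\REZ$ of \cite[Proposition 43]{HW}, a statement about arbitrary pointed simplicial sets that therefore applies unchanged, and trace $\xx\otimes\yy$ through the chain above exactly as in the metric case to see the composite is $\square$. (5) Apply the algebraic Künneth formula to $\MC_*(X)$ and $\MC_*(Y)$, both degreewise free abelian, and transport the middle term across $H_*(\square)$.

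For naturality on $\Bet_-$: given morphisms $(f,G_f)\colon X\to X'$ and $(g,G_g)\colon Y\to Y'$, their product is $(f\times g,\ G_{f\times g})$ with $G_{f\times g}=\{\xx\times\yy : \xx\in G_f,\ \yy\in G_g\}$, already observed to be a good set. Under the identification $\MS(X\times Y)\cong\MS(X)\wedge\MS(Y)$ this choice is exactly what makes $\MS(f\times g)$ correspond to $\MS(f)\wedge\MS(g)$: a \ppath{} $\zz$ in $X\times Y$ corresponds to $[\xx,\yy]$ with $\xx,\yy$ recovered by projection, and $\zz\in G_{f\times g}$ iff $\xx\in G_f$ and $\yy\in G_g$, which is precisely when $\MS(f)\wedge\MS(g)$ does not collapse $[\xx,\yy]$ to the basepoint. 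Since $\REZ$, the two $\NR$-isomorphisms, and the algebraic Künneth sequence are all natural, naturality of $\square$ at the chain level and hence of the short exact sequence follows. I expect this last piece of bookkeeping around good sets — not any homological input — to be the only place the betweenness generality costs anything: the homotopy-theoretic content is inherited wholesale from \cite{HW}, and the present proof is genuinely a transcription.
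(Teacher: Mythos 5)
Your proposal is correct and follows essentially the same route as the paper's own (very terse) proof: rerun the metric argument through the chain $\MC_*(X)\otimes\MC_*(Y)\cong\NR_*(\MS(X))\otimes\NR_*(\MS(Y))\ \ovs{\REZ}{\rightarrow}\ \NR_*(\MS(X)\wedge\MS(Y))\cong\NR_*(\MS(X\times Y))$, using the product interval identity for the simplicial isomorphism and the good set $G_{f\times g}$ for naturality in $\Bet_-$. The paper simply records the naturality square and says ``as the metric case''; your write-up fills in the same steps in more detail (your explicit chain-map check for $\square$ is not even strictly needed, since the composite is automatically a chain map).
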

    \begin{proof}
        As the metric case,
        start with betweenness spaces $X,Y$. 
        The map
        \[
            \square: \MS(X) \wedge  \MS(Y) \rightarrow  \MS(X\times Y); \quad [\xx,\yy] \mapsto  \langle \xx,\yy\rangle 
        \]
        is a natural morphism of pointed simplicial sets, in the sense that if $X',Y'$ and morphisms $(f:X\rightarrow X',G_f),\ (g:Y\rightarrow Y',G_g)$ are given, we get a commuting square
        \[\begin{tikzcd}
            \MS(X) \wedge  \MS(Y) \dar["\MS(f)\wedge \MS(g)"]\rar["\square"] & \MS(X\times Y) \dar["\MS(f\times g)"] \\
            \MS(X') \wedge  \MS(Y) \rar["\square"] & \MS(X'\times Y').
        \end{tikzcd}\]
        
        Applying $\NR_*$ yields $\MC_*(X),\MC_*(Y)$ and $\MC_*(X\times Y)$ and $\REZ$ the desired isomorphism.

    \end{proof}

    In the case of Mayer-Vietoris, we first need to translate the notion of “gated decomposition”.

    Let $X$ be a betweenness space, $Y,Z$ subsets of $X$, such that $X=Y\cup Z$, and $\pi :Z\rightarrow Y\cap Z$ a map satisfying:
    \begin{itemize}
        \item
            For all $z\in Z,y\in Y$: $\pi (z) \in  [z,y]$;
        \item
            $\pi |_{Y\cap Z} = \Id_{Y\cap Z}$;
        \item
            $\hG(\pi )$ contains $\MS(Y\cap Z) - \{\pt_n\}$.
    \end{itemize}
    We again call this data a \emph{gated decomposition} of $X$.
    The (last two) requirements for $\pi $ imply in particular that it (along with its maximal good set) defines a retraction in the category $\Bet_-$, of left inverse the inclusion $\iota $:
    More precisely $(\pi :Z\rightarrow Y\cap Z, \hG(\pi ))$ has left inverse $(\iota :Y\cap Z\rightarrow Z,\MS(Y\cap Z)-\{\pt\})$.

    \begin{proposition}[Excision – betweenness setting]\label{proposition:bet_excision}
        If $X,Y,Z,\pi $ is the data of a gated decomposition of $X$, then the inclusion
        \[
            \MC_*(Y,Z) \hookrightarrow  \MC_*(X)
        \]
        is a quasi-isomorphism.
    \end{proposition}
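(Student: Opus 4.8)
The plan is to run the proof of \Cref{theorem:metric_excision} essentially unchanged, observing that every auxiliary object occurring there refers only to set-membership in $Y$, $Z$ and $W := Y\cap Z$, and hence makes sense for an arbitrary betweenness space. Concretely, for $a\in Y-Z$, $b\in Z-Y$ (or vice versa) and $i\in\NN$ one defines the subcomplexes $A_*(a,b)$, $B_*(b)$, $\tB_*(b)$, $F_*(b;i)$ and $G_*(l)$ of $\MC_*(X) = \NR_*(\MS(X))$ by the same spans as before; the boundary $\sum_i(-1)^i\partial_i$ is now the one coming from \Cref{definition:bet_magnitude_sset}, so that $\partial_i$ drops $x_i$ precisely when $x_i\in[x_{i-1},x_{i+1}]$ in the betweenness sense. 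Exactly as in the metric case, $G_*(0)=\MC_*(Y,Z)$, $G_k(l)=\MC_k(X)$ for $k\le l$, so $\MC_*(X)=\varinjlim_l G_*(l)$, and it suffices to prove that each inclusion $G_*(l)\hookrightarrow G_*(l+1)$ is a quasi-isomorphism.

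The combinatorial backbone of the metric argument then transfers essentially verbatim, since it never used the metric beyond the definition of $\partial$: the identifications
\[
    F_*(b,l+1)/F_*(b,l) \;\cong\; \bigoplus_{\xx\in J_Z(l)} \Sigma^l A_*(x_l,b)
    \qquad\text{and}\qquad
    G_*(l+1)/G_*(l) \;\cong\; \bigoplus_{\xx\in K(l)} \Sigma^l B_*(x_{k-l})/\tB_*(x_{k-l})
\]
of \Cref{proposition:consecutive_Fs_are_quasi_isom,proposition:each_step_is_acyclic}, together with the direct-limit argument showing $\tB_*(b)\hookrightarrow B_*(b)$ is a quasi-isomorphism, are pure bookkeeping about which index lands in which block and carry over as written. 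Chaining them exactly as in the proof of \Cref{theorem:metric_excision} reduces the whole proposition to the single analytic input, the acyclicity of $A_*(a,b)$.

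So the one step that genuinely has to be redone is \Cref{proposition:A_is_acyclic}. The metric proof builds a contracting homotopy $s$ that inserts the gate $\pi(b)$ just before $b$ (with sign $(-1)^k$, and $0$ when $\pi(b)=x_{k-1}$) and checks $\partial s-s\partial=\Id$ by a case analysis on whether $x_{k-1}=\pi(b)$ and whether $x_{k-1}\in[x_{k-2},b]$. In the betweenness setting the same $s$ still lands in $A_{*+1}(a,b)$: $\pi$ has codomain $W$ so $\pi(b)\in W$, and $\pi(b)\neq b$ since $\pi(b)\in W\subseteq Y$ while $b\notin Y$. What must be re-verified is that every interval membership invoked in the case analysis — that $\pi(b)$ lies between $x_i$ and $b$, and that the faces of the inserted path drop the expected index — is supplied by the defining relation $\pi(z)\in[z,y]$ of the gated decomposition, axioms \ref{axiom:endpoints_in_interval}--\ref{axiom:directionality}, and, where useful, the maximal-good-set condition $\hG(\pi)\supseteq\MS(W)-\{\pt_n\}$; note that $\hG(\pi)$ itself plays no further role in excision, since $s$ uses only the point $\pi(b)$, not the induced simplicial map.

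This re-checking is where I expect the real difficulty, and the subtle point is orientation. A betweenness space need not have symmetric intervals (posets are the basic example), whereas the metric verification silently uses $[x,y]=[y,x]$ in a few places — for instance it infers $x_{k-1}=\pi(b)\in[x_{k-2},b]$ from the gate relation $\pi(b)\in[b,x_{k-2}]$ — and the axioms \ref{axiom:endpoints_in_interval}--\ref{axiom:directionality} never relate $[x,z]$ to $[z,x]$. One therefore has to confirm that the gate relation is available in precisely the orientation demanded by the boundary formula at each use; if it is not, the natural fix is to read the gate condition of a gated decomposition as asking $\pi(z)$ to realise betweenness on both sides, i.e.\ $\pi(z)\in[z,y]\cap[y,z]$ — a strengthening that holds automatically in the metric case (since $d$ is symmetric) and restores the contracting homotopy word for word. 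Modulo this orientation bookkeeping, everything else in the proof of \Cref{theorem:metric_excision} applies unchanged.
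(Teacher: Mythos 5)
Your proposal matches the paper's proof, which likewise observes that everything downstream of the acyclicity of $A_*(a,b)$ is purely set-theoretic bookkeeping about membership in $Y$, $Z$, $W$ and transfers verbatim, so that only \Cref{proposition:A_is_acyclic} needs re-checking against the defining properties of $\pi$. Your orientation remark is a genuine catch that the paper glosses over: every interval membership in the homotopy verification has the form $\pi(b)\in[x_i,b]$ (and its consequences via \ref{axiom:subintervals} and \ref{axiom:directionality}), whereas the stated gate axiom supplies $\pi(z)\in[z,y]$, so for non-symmetric betweenness structures such as posets one does need to flip, or two-side, the gate condition exactly as you propose.
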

    \begin{proof}
        Similar as in the metric case.
        For the acyclicity of the complex $A_*(a,b)$ of~\Cref{proposition:A_is_acyclic}, the three properties of $\pi $ are used.
        All subsequent arguments build only upon \Cref{proposition:A_is_acyclic} and the (set-theoretic) decomposition $X=Y\cup Z$ (i.e. nothing more is used), hence they still work in the current context.
    \end{proof}

    \begin{theorem}[Mayer-Vietoris – betweenness setting]
        If $X,Y,Z,\pi $ is the data of a gated decomposition of $X$, then the obvious morphisms
        \[
            j_Y:W\rightarrow Y,\ j_Z:W\rightarrow Z,\ i_Y:Y\rightarrow X,\ i_Z:Z\rightarrow X
        \] 
        induce a short exact sequence:
        \[
            0 \rightarrow  \MH_*(W) \ovs{\langle (j_Y)_*, -(j_Z)_*\rangle}{\longrightarrow } \MH_*(Y)\oplus \MH_*(Z) \ovs{(i_Y)_*\oplus (i_Z)_*}{\longrightarrow } \MH_*(X) \rightarrow  0.
        \]
    \end{theorem}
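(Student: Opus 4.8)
The plan is to transport the metric-case proof of Mayer--Vietoris almost verbatim, substituting \Cref{proposition:bet_excision} for metric excision and checking that its one substantial ingredient --- that $\pi$ is a retraction --- survives in $\Bet_-$. Throughout I would equip each subset $A$ of $X$ with the restricted interval structure $[a,b]_A := [a,b]_X \cap A$, so that $A$ is again a betweenness space and each of the inclusions $j_Y:W\hookrightarrow Y$, $j_Z:W\hookrightarrow Z$ (where $W := Y\cap Z$), $i_Y:Y\hookrightarrow X$, $i_Z:Z\hookrightarrow X$ is a betweenness embedding: a point of $A$ lies in an $X$-interval of points of $A$ exactly when it lies in the corresponding $A$-interval. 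Each such inclusion is then a morphism of $\Bet_-$ whose good set is all of $\MS(A)-\{\pt\}$, and so induces honest chain maps $(j_Y)_*, (j_Z)_*, (i_Y)_*, (i_Z)_*$.

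The first step is to produce the short exact sequence of chain complexes
\[
    0 \to \MC_*(W) \ovs{\langle (j_Y)_*,-(j_Z)_*\rangle}{\longrightarrow} \MC_*(Y)\oplus\MC_*(Z) \ovs{(i_Y)_*\oplus(i_Z)_*}{\longrightarrow} \MC_*(Y,Z) \to 0,
\]
where $\MC_*(Y,Z)\leq\MC_*(X)$ is, as before, spanned by the \ppath{}s lying entirely in $Y$ or entirely in $Z$. Its exactness is purely combinatorial and is argued exactly as in the metric case: the right-hand map is onto since every such \ppath{} is the image of itself, the left-hand map is injective since $(j_Y)_*$ is, and a chain annihilated by $(i_Y)_*\oplus(i_Z)_*$ must cancel term by term inside $\MC_*(X)$, which forces every \ppath{} occurring in it to lie in both $Y$ and $Z$, hence in $W$. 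Passing to homology yields a long exact sequence.

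Next I would show that $\langle (j_Y)_*,-(j_Z)_*\rangle$ is injective on homology; this is the only step invoking the gated hypothesis. The paragraph preceding the statement already records that $(\pi:Z\to W,\hG(\pi))$ is a morphism of $\Bet_-$ admitting the inclusion $(j_Z:W\to Z,\MS(W)-\{\pt\})$ as a left inverse, the relevant facts being that $\pi|_W=\Id_W$ and that $\hG(\pi)\supseteq\MS(W)-\{\pt\}$ forces the composite good set to be all of $\MS(W)-\{\pt\}$. By functoriality of $\MH_*$ this yields $\pi_*\circ(j_Z)_* = \Id_{\MH_*(W)}$, so that $-\pi_*\circ\prj_2:\MH_*(Y)\oplus\MH_*(Z)\to\MH_*(Z)\to\MH_*(W)$ is a left inverse of $\langle (j_Y)_*,-(j_Z)_*\rangle$, which is therefore a monomorphism. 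Consequently the long exact sequence decomposes into short exact pieces
\[
    0 \to \MH_*(W) \to \MH_*(Y)\oplus\MH_*(Z) \to \MH_*(Y,Z) \to 0,
\]
and composing the last term with the excision isomorphism $\MH_*(Y,Z)\cong\MH_*(X)$ of \Cref{proposition:bet_excision} produces the claimed sequence.

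I do not expect a genuine obstacle here: all the real work is already packaged into \Cref{proposition:bet_excision}, so the ``hard part'' of this theorem reduces to bookkeeping --- checking that the inclusions and $\pi$ truly live in $\Bet_-$ with mutually compatible good sets, so that $(j_Y)_*,(i_Y)_*,\pi_*$ are well defined and the functorial identity $\pi_*\circ(j_Z)_*=\Id$ genuinely holds on homology. The subtlest points are notational: fixing the restricted interval structure on $Y,Z,W$ so the inclusions are betweenness embeddings, and reading off from the definition of a betweenness gated decomposition that $j_Z$ is a left inverse of $\pi$ in $\Bet_-$. As in the metric case, naturality of the resulting sequence is not asserted.
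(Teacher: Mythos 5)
Your proposal is correct and follows essentially the same route as the paper: the paper's proof is literally ``same as in the metric case,'' noting only that the inclusions become $\Bet_-$-morphisms with full good sets and that $\pi$ is a retraction in $\Bet_-$, which are exactly the two bookkeeping points you verify before re-running the metric argument (short exact sequence of chain complexes, left inverse via $\pi_*$, splicing with \Cref{proposition:bet_excision}).
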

    \begin{proof}
        Same as in the metric case.
        Note that all inclusions $\iota $ yield morphisms $(\iota ,\MS(\dom\iota )-\{\pt\})$ in $\Bet_-$, and as explained above, $\pi $ defines a retraction in $\Bet_-$.
    \end{proof}

    \bibliographystyle{unsrt} 
    \bibliography{biblio}
\end{document}